\newtheorem{theorem}{Theorem}[section]
\newtheorem{lemma}[theorem]{Lemma}
\newtheorem{proposition}[theorem]{Proposition}
\newtheorem{corollary}[theorem]{Corollary}
\theoremstyle{definition}
\newtheorem{definition}[theorem]{Definition}
\newtheorem{remark}[theorem]{Remark}
\newtheorem{question}[theorem]{Question}
\newtheorem{claim}{Claim}
\def\B{\mathcal{B}}
\def\N{\mathbb{N}}
\def\Z{\mathbb{Z}}
\def\R{\mathbb{R}}
\def\T{\mathbb{T}}
\def\V{\mathcal{V}}
\def\C{\mathfrak{c}}
\def\sa{selectively admissible}
\def\sp{strongly pseudocompact}
\def\spness{strong pseudocompactness}
\def\ssp{selectively sequentially pseudocompact}
\def\cf{\mathrm{cf}}
\begin{document}
\title[Selectively sequentially pseudocompact group topologies]{Selectively sequentially pseudocompact group topologies on torsion and torsion-free  Abelian groups} 
\author[A. Dorantes-Aldama]{Alejandro Dorantes-Aldama}
\address{Department of Mathematics, Faculty of Science, Ehime University,
  Matsuyama 790-8577, Japan}
\email{alejandro\_dorantes@ciencias.unam.mx}
\thanks{The first listed author was supported by CONACyT  of M\'exico: Estancias Posdoctorales al Extranjero propuesta No. 263464}

\author[D. Shakhmatov]{Dmitri Shakhmatov}
\address{Division of Mathematics, Physics and Earth Sciences\\
Graduate School of Science and Engineering\\
Ehime University, Matsuyama 790-8577, Japan}
\email{dmitri.shakhmatov@ehime-u.ac.jp}
\thanks{The second listed author was partially supported by the Grant-in-Aid for Scientific Research~(C) No.~26400091 by the Japan Society for the Promotion of Science (JSPS)}

\subjclass[2010]{Primary: 22A10; Secondary: 22C05, 54A20, 54D30, 54H11}

\keywords{convergent sequence,  pseudocompact, strongly pseudocompact,  topological group, free abelian group, variety of groups, torsion group, torsion-free group}

\begin{abstract} 
A space $X$ is {\em \ssp} if for every family $\{U_n:n\in\N\}$ of non-empty open subsets of $X,$
one can choose a point $x_n\in U_n$ for every $n\in \N$ 
in such a way that the sequence $\{x_n:n\in\N\}$ has a convergent subsequence. Let $G$ be a group from one of the following three classes:
(i)
$\mathcal{V}$-free groups, where $\mathcal{V}$ is an arbitrary variety of Abelian groups;
(ii)
torsion Abelian groups;
(iii)
torsion-free Abelian groups.   
Under the Singular Cardinal Hypothesis SCH, we prove that if $G$ admits a pseudocompact group topology, then it can also be equipped with a \ssp\ group topology. 
Since \ssp\ spaces are strongly pseudocompact in the sense of 
Garc\'ia-Ferreira and  Ortiz-Castillo, this provides a strong positive (albeit partial) answer to a question of Garc\'ia-Ferreira and  Tomita.
\end{abstract}

\maketitle

The symbol $\N$ denotes the set of natural numbers and 
$\N^+=\N\setminus\{0\}$ denotes the set of positive integer numbers.
The symbol
$\Z$ denotes the group of integer numbers,
$\R$ the set of real numbers and $\T$ 
the circle group $\{e^{i\theta} :\theta \in \R\}\subseteq \R^2.$ 
The symbols $\omega,\omega_1$ and $\C$ stand for the first infinite cardinal, the first uncountable cardinal and the 
cardinality of the continuum, respectively.
For a set $X$, the set of all finite subsets of $X$ is denoted by 
$[X]^{<\omega}$, while $[X]^\omega$ denotes the set of all countably infinite subsets of $X$.

If $X$ is a subset of a group $G$, then $\langle X\rangle $ is the smallest subgroup of $G$ that
contains $X$. 

For groups which are not necessary Abelian we use the multiplication notation, while for Abelian groups we always use
the additive one. In particular, $e$  denotes the identity element of a group and $0$ is used for the zero element 
of an Abelian group. Recall that 
an element $g$ of a group $G$ is {\em torsion\/} if $g^n=e$
for some positive integer $n$.  
A group is {\em torsion\/} if all of its elements are torsion.
A group $G$ is {\em bounded torsion\/} if there exists $n\in \N$ such that $g^n=e$ for all $g\in G$.

Let $G$ be an Abelian group. The symbol $t(G)$ stands for the subgroup of torsion elements of $G$. 
For each $n\in \N$, note that
$nG=\{ng:g\in G\}$
 is a subgroup of $G$ and the map
$g\mapsto ng$ ($g\in G$) is a homomorphism of $G$ onto $nG$.
For a cardinal $\tau$ we denote by $G^{(\tau)}$ the direct sum of $\tau$ copies
of the group $G$. 

We will say that a sequence $\{x_n:n\in\N\}$ of points in a topological space 
$X$:
\begin{itemize}
\item
{\em converges to a point $x\in X$\/} if
 the set 
$\{n\in\N: x_n\not\in W\}$ is finite for every open neighbourhood $W$ of $x$ in $X$;
\item
is {\em convergent\/} if it converges to some point of $X$.
\end{itemize}

\section{Introduction}

In this paper, we continue the study of the class of \ssp\ spaces
introduced by the authors in \cite{DoS}.

\begin{definition}\label{definition_1} \cite{DoS}
A topological space $X$ 
is {\em \ssp\/} if for every sequence $\{U_n:n\in\N\}$ of non-empty open subsets of $X,$
one can choose a point $x_n\in U_n$ for every $n\in \N$ 
in such a way that the sequence $\{x_n:n\in\N\}$ has a convergent subsequence.
\end{definition}

A nice feature 
of the class of \ssp\ spaces 
is that it is closed under 
taking arbitrary Cartesian products:

\begin{proposition}
\label{product:ssp}
\cite[Corollary 4.4]{DoS}
Arbitrary products of \ssp\ spaces are \ssp. 
\end{proposition}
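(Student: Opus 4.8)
The plan is to reduce the arbitrary product to a countable one by exploiting the finite support of basic open boxes, and then to settle the countable case by a diagonal extraction that uses \ssp\ of each individual factor.

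First I would set up notation. Let $X=\prod_{i\in I}X_i$ be a product of \ssp\ spaces and let $\{U_n:n\in\N\}$ be a sequence of non-empty open subsets of $X$. For each $n$, choose a non-empty basic open box $V_n=\prod_{i\in I}V_{n,i}\subseteq U_n$, where each $V_{n,i}$ is a non-empty open subset of $X_i$ and the support $F_n=\{i\in I:V_{n,i}\neq X_i\}$ is finite. Put $F=\bigcup_{n\in\N}F_n$; being a countable union of finite sets, $F$ is countable. Split the index set as $I=F\sqcup(I\setminus F)$ and write $X\cong Y\times Z$ with $Y=\prod_{i\in F}X_i$ and $Z=\prod_{i\in I\setminus F}X_i$. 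Since $V_{n,i}=X_i$ for every $i\in I\setminus F$, the box factors as $V_n=W_n\times Z$, where $W_n=\prod_{i\in F}V_{n,i}$ is a non-empty open subset of $Y$.

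Second, I would reduce to the countable factor $Y$. Fix any point $z\in Z$. Suppose we can produce points $y_n\in W_n$ and an infinite set $A\subseteq\N$ such that $\{y_n:n\in A\}$ converges in $Y$. Setting $x_n=(y_n,z)\in W_n\times Z=V_n\subseteq U_n$, the $Z$-coordinate of $x_n$ is constantly $z$, so $\{x_n:n\in A\}$ converges coordinatewise and hence converges in the product $X$. Thus everything comes down to proving that the countable product $Y$ is \ssp, applied to the family $\{W_n:n\in\N\}$.

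Third, I would handle the countable product by diagonalization. Enumerate $F$ (finite or countably infinite) and refine each $W_n$ to a basic box with coordinates $W_{n,k}\subseteq X_k$. Build a decreasing chain of infinite sets $\N=A_{-1}\supseteq A_0\supseteq A_1\supseteq\cdots$ as follows: at stage $k$, apply Definition \ref{definition_1} to the factor $X_k$ and to the family $\{W_{n,k}:n\in A_{k-1}\}$ to obtain, for each $n\in A_{k-1}$, a point $y_n(k)\in W_{n,k}$ together with an infinite $A_k\subseteq A_{k-1}$ for which $\{y_n(k):n\in A_k\}$ converges in $X_k$. For indices $n\notin A_{k-1}$ simply choose $y_n(k)\in W_{n,k}$ arbitrarily, so that $y_n=(y_n(k))_k\in W_n$ is defined for every $n$. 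Finally choose a diagonal set $A=\{m_k:k\in\N\}$ with $m_0<m_1<\cdots$ and $m_k\in A_k$. For each fixed coordinate $k$ the tail $\{m_j:j\geq k\}$ lies in $A_k$, since $m_j\in A_j\subseteq A_k$ whenever $j\geq k$; hence $\{y_{m_j}(k):j\in\N\}$ is, up to finitely many terms, a subsequence of the convergent sequence $\{y_n(k):n\in A_k\}$ and therefore converges in $X_k$. Coordinatewise convergence yields convergence of $\{y_{m_j}:j\in\N\}$ in $Y$, which is what was needed.

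The routine parts are the passage to basic boxes and the constant choice on $Z$; the step demanding the most care is the diagonal extraction, where one must make the coordinate choices along a shrinking sequence of index sets while still defining a genuine point $y_n$ of the product for \emph{every} $n$, and then verify that the tails of the diagonal set fall inside each $A_k$ so that coordinatewise convergence is guaranteed.
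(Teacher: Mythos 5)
Your proof is correct, and there is nothing in this paper to compare it against: the proposition is not proved here but quoted from \cite[Corollary 4.4]{DoS}. Your argument --- shrinking each $U_n$ to a basic box, collecting the countably many support coordinates into one countable subproduct, fixing a constant point on the remaining coordinates, and then handling the countable subproduct by a nested diagonal extraction with coordinatewise convergence --- is the standard proof and is essentially the route taken in \cite{DoS}.
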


Garc\'ia-Ferreira and 
Ortiz-Castillo \cite{GO} 
have recently introduced the class of strongly pseudocompact spaces.
The original definition involves the notion of a $p$-limit for an ultrafilter $p$ on $\N$. The authors have shown in \cite[Theorem 2.1]{DoS}
that the original definition is equivalent to the following one:

\begin{definition}
\label{definition_11}
A topological space $X$ is called {\em strongly pseudocompact\/}
provided that for each sequence $\{U_n:n\in\N\}$ of  pairwise disjoint non-empty open subsets of $X$,
one can choose a point $x_n\in U_n$ for every $n\in\N$ such that
the set $\{x_n:n\in\N\}$ is not closed in $X$.
\end{definition}

In the class of topological groups, this property has been investigated by Garc\'ia-Ferreira and  Tomita in \cite{GT}.

The following implications are clear from Definitions \ref{definition_1} and  \ref{definition_11}:
\begin{equation}
\label{two:implications}
\xymatrix{\text{\ssp} \to \text{strongly pseudocompact} \to \text{pseudocompact}.}
\end{equation}

It easily follows from Definition \ref{definition_11}
that 
countably compact spaces are \sp. Since there exists a countably compact space $X$ such that $X\times X$ is not even pseudocompact, 
an analogue of Proposition \ref{product:ssp} for \sp\ spaces is false. 
Whether \spness\ is productive in the class of topological groups remains an open problem \cite{GT}.
 
Neither of the implications in \eqref{two:implications}
can be reversed even in the class of topological (Abelian) groups. An example of a pseudocompact Abelian group which is not strongly pseudocompact was constructed by Garc\'ia-Ferreira and  Tomita in \cite{GT}. 
The
authors gave a consistent example of a strongly pseudocompact Abelian group which is not \ssp\ in \cite[Example 5.7]{DoS}, 
A ZFC example of such a group is given in 
\cite{SY}, thereby answering \cite[Question 8.3(i)]{DoS}.

Garc\'ia-Ferreira and 
Tomita  asked if the last implication in \eqref{two:implications} can be reversed  when the existential quantifier is added to both sides of it.

\begin{question}
\label{question:GT}
\cite[Question 2.7]{GT}
If an Abelian group admits a pseudocompact group topology, does it admit a strongly pseudocompact group topology?
\end{question}

In view of the implications in \eqref{two:implications},
the following version of this 
question 
is even harder to answer in the positive.

\begin{question}
\label{main:problem}
If an Abelian group admits a pseudocompact group topology, does it also admit a \ssp\ group topology?
\end{question}

The goal of this paper is to provide a positive answer to 
both questions
for the following classes of groups:
\begin{itemize}
\item[(a)]
$\mathcal{V}$-free groups, where $\mathcal{V}$ is a precompact variety of groups (Corollary \ref{four:conditions:for:a:precompact:variety});
\item[(b)]
$\mathcal{V}$-free groups, where $\mathcal{V}$ is any variety of Abelian groups (Corollary \ref{corollary:for:abelian:varieties});
\item[(c)]
torsion Abelian groups (Theorem \ref{thm:for:torsion:groups});
\item[(d)]
torsion-free Abelian groups (Corollary \ref{sch_torsion-free}; see also Theorem \ref{sch_connected} for a more general result).   
\end{itemize}

Our results depend on some additional set-theoretic assumptions which we outline below.

Let $\tau$ be a cardinal.
As usual, symbol $\tau^\omega$ denotes the cardinality of the set 
$[X]^\omega$ of all countably infinite subsets of some (equivalently, every) set $X$ such that $|X|=\tau$,
while $2^\tau$ denotes the cardinality of the set of all subsets 
of some (equivalently, every) set $X$ such that $|X|=\tau$.
Symbol
$\cf(\tau)$ denotes the {\em cofinality\/} of $\tau$, i.e. the smallest cardinal $\lambda$
for which there exists a representation $\tau=\Sigma\{\tau_\alpha:\alpha<\lambda\}$ where $\tau_\alpha<\tau$ 
for all $\alpha<\lambda.$ 
Finally, $\tau^+$ denotes the smallest cardinal bigger than $\tau$.

We fix the symbol SCH for denoting the following condition on cardinals:
\begin{equation}\label{sch}
\textit{if } \tau\geq \C \textit{ is a cardinal and } \cf(\tau)\not=\omega, \textit{ then } \tau^{\omega}=\tau.
\end{equation}  

We choose the abbreviation symbol SCH for denoting the condition \eqref{sch} since it
is known to be equivalent to the Singular Cardinals Hypothesis;
see \cite[Chapter 8]{J}.
 
The Generalized Continuum Hypothesis (abbreviated GCH) states 
that $2^\tau=\tau^+$ for every infinite cardinal $\tau$.
It is well known that GCH implies SCH
and that GCH is consistent with the usual axioms ZFC of set theory.

Prikry and Silver have constructed models of set theory violating
SCH 
assuming the consistency of supercompact cardinals; see \cite[chapter 36]{J}. Devlin and Jensen proved that 
the failure of SCH implies the existence of an inner model with a large cardinal; see \cite{DJ}. 
Therefore, large cardinals are needed in order to violate SCH, and so
SCH can be viewed as a ``mild'' additional set-theoretic assumption beyond the axioms of ZFC.

Our results mentioned in items (b), (c) and (d)
above
  hold under SCH,
while we were able to prove item (a) 
only under GCH. 

It is worth mentioning that our results are close in spirit to those obtained by 
Galindo and 
Macario \cite{GM}.
Indeed, 
they prove, under SCH, that if an Abelian 
group has a pseudocompact group topology, then it 
also admits a pseudocompact group topology without infinite compact subsets. This additional property is important in the Pontryagin duality theory.
Clearly, group topologies we construct are substantially different from those in \cite{GM}, as \ssp\ spaces have many non-trivial convergent sequences \cite[Proposition 3.1]{DoS}.

We refer the interested reader to \cite{DoS} for a detailed survey of connections of selective pseudocompactness and strong pseudocompactness to other classical compactness-like notions, including the notion of sequential pseudocompactness of 
Artico, 
Marconi, 
Pelant, 
Rotter and 
Tkachenko~\cite{AMPRT}.

We finish this introduction with 
the simple proposition from \cite{DoS} which shall be used 
later:

\begin{proposition}
\label{dense:ssp}
\cite[Corollary 3.4]{DoS} If a space $X$ has a dense \ssp\ (strongly pseudocompact) subspace, then $X$ is \ssp\ (respectively, strongly pseudocompact).
\end{proposition}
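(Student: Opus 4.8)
The plan is to argue directly from Definitions \ref{definition_1} and \ref{definition_11}, exploiting the fact that the dense subspace already supplies the required points and that both convergence and non-closedness transfer upward from a subspace to the ambient space. Let $D$ be a dense subspace of $X$; I would treat the two parallel statements together, since they differ only in which selection hypothesis on $D$ is invoked.

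For the \ssp\ case, I would start with an arbitrary sequence $\{U_n : n \in \N\}$ of non-empty open subsets of $X$. Since $D$ is dense in $X$, each trace $U_n \cap D$ is a non-empty open subset of $D$. Applying the assumption that $D$ is \ssp\ to the sequence $\{U_n \cap D : n \in \N\}$, I obtain points $x_n \in U_n \cap D$ for every $n \in \N$ such that some subsequence $\{x_{n_k} : k \in \N\}$ converges to a point $x \in D$. The crucial — and essentially only — observation is that convergence to a point of $D$ is the same whether computed in $D$ or in $X$: every open neighbourhood of $x$ in $D$ has the form $W \cap D$ with $W$ open in $X$, and since each $x_{n_k}$ lies in $D$, the membership $x_{n_k} \in W$ is equivalent to $x_{n_k} \in W \cap D$. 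Hence $\{x_{n_k} : k\in\N\}$ converges to $x$ in $X$ as well. As $x_n \in U_n \cap D \subseteq U_n$ for every $n$, this is precisely the selection required by Definition \ref{definition_1}, so $X$ is \ssp.

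For the strongly pseudocompact case, I would run the identical argument with \emph{pairwise disjoint} open sets: given such a sequence $\{U_n : n \in \N\}$ in $X$, the sets $U_n \cap D$ are pairwise disjoint non-empty open subsets of $D$, so strong pseudocompactness of $D$ yields points $x_n \in U_n \cap D$ such that the set $A = \{x_n : n \in \N\}$ is not closed in $D$. The analogue of the convergence observation is the identity $\overline{A}^{D} = \overline{A}^{X} \cap D$ relating the closures of $A$ in $D$ and in $X$. From $A \neq \overline{A}^{D}$ I obtain a point $d \in (\overline{A}^{X} \cap D) \setminus A$; this $d$ lies in $\overline{A}^{X}$ but not in $A$, witnessing that $A$ is not closed in $X$ either. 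Thus $X$ is strongly pseudocompact.

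There is no genuine obstacle here: the entire content is the passage from $D$ to $X$, which rests on two elementary topological facts — that a sequence contained in $D$ converges in $X$ to a point of $D$ if and only if it converges to that point in $D$, and that the $D$-closure of a subset of $D$ is the trace on $D$ of its $X$-closure. Density of $D$ enters only once, at the very start, to guarantee that each trace $U_n \cap D$ is non-empty so that the hypothesis on $D$ can legitimately be invoked.
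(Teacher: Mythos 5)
Your proof is correct. The paper does not actually prove this proposition itself --- it simply cites \cite[Corollary 3.4]{DoS} --- and your direct argument from Definitions \ref{definition_1} and \ref{definition_11} (tracing each open set on the dense subspace, which is non-empty open there by density, and then lifting convergence of the chosen subsequence, respectively non-closedness of the chosen set via $\overline{A}^{D}=\overline{A}^{X}\cap D$, back up to the ambient space) is precisely the standard argument, complete in all details.
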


\section{Building \ssp\ $\mathcal{V}$-independent sets for a variety $\mathcal{V}$ of groups}
 
Recall that a {\em variety of groups\/} is a class of (abstract) groups closed under Cartesian products, subgroups and quotients
\cite{N}.
\begin{definition}\cite[Definition 2.1]{DiS}
\label{def:V-independent}
Let $\V$ be a variety of groups and let $G$ be a group.
A subset $X$ of $G$ is said to be {\em $\V$-independent} if 
\begin{itemize}
\item[(a)] $\langle X\rangle\in\V,$ and
\item[(b)] for each map $f:X\to H\in\V$ there exists a unique homomorphism $\tilde{f}:\langle X\rangle\to H$ extending $f.$
\end{itemize}
The cardinal
	$$r_\V(G)=\sup\{|X|:X \textrm{ is a } \V \textrm{-independent subset of } G\}
	$$
is called
the {\em $\V$-rank} of $G$. 
\end{definition}

We shall need the following useful fact from \cite{DiS}.

\begin{lemma}\label{lemma2.3_DiS}
\label{lemma2.4_DiS}
Let $\V$ be a variety of groups and $X$ be a subset of a group $G$. 
\begin{itemize}
\item[(i)]
$X$   is $\V$-independent if and only if
each finite subset of $X$ is $\V$-independent. 
\item[(ii)] If $H$ is a group and
$f:G\to H$ is a homomorphism such that
$f(X)$ is a $\V$-independent subset of $H$, $\langle X\rangle\in\V$
and $f\restriction X:X\to H$ is an injection, then $f\restriction \langle X\rangle :X\to H$ is an injection
and $X$ is a $\V$-independent subset of $G$. 
\end{itemize}
\end{lemma}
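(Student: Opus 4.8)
The plan is to treat the two statements separately, after making one simplifying observation that applies throughout: in the definition of $\V$-independence the \emph{uniqueness} of the extension $\tilde f$ is automatic, since a homomorphism out of $\langle X\rangle$ is completely determined by its values on the generating set $X$. Hence in each part the only thing that genuinely requires an argument is the \emph{existence} of the extension, together with the membership requirement that the relevant generated subgroup lie in $\V$.

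For (i), the forward implication is immediate: if $X$ is $\V$-independent and $F\subseteq X$, then $\langle F\rangle\le\langle X\rangle\in\V$ gives $\langle F\rangle\in\V$ because varieties are closed under subgroups, and any $f:F\to H\in\V$ extends to all of $X$ (say by sending $X\setminus F$ to $e$), runs through the universal property of $X$, and is then restricted to $\langle F\rangle$. For the reverse implication I would use that $\langle X\rangle=\bigcup\{\langle F\rangle:F\in[X]^{<\omega}\}$ is a directed union. First, $\langle X\rangle\in\V$: by Birkhoff's theorem $\V$ is an equational class, so it suffices to verify each law $w=e$ of $\V$, and any finite tuple of elements of $\langle X\rangle$ already lies in some $\langle F\rangle\in\V$, where the law holds. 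Next, given $f:X\to H\in\V$, I would take the unique extensions $\tilde f_F:\langle F\rangle\to H$ furnished by the $\V$-independence of each finite $F$, note that they are pairwise compatible (for $F\subseteq F'$ both $\tilde f_{F'}\restriction\langle F\rangle$ and $\tilde f_F$ extend $f\restriction F$, so they coincide by uniqueness), and glue them into $\tilde f=\bigcup_F\tilde f_F$. That $\tilde f$ is a homomorphism is routine, since any two elements of $\langle X\rangle$ lie in a common $\langle F\rangle$.

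For (ii), the crucial point is that $f$ restricts to an isomorphism $\langle X\rangle\to\langle f(X)\rangle$. Surjectivity is clear, as $f(\langle X\rangle)=\langle f(X)\rangle$; the real work is injectivity. Here I would exploit the hypothesis $\langle X\rangle\in\V$ together with the $\V$-independence of $f(X)$: since $f\restriction X$ is injective, the assignment $f(x)\mapsto x$ is a well-defined map $f(X)\to\langle X\rangle\in\V$, which by the universal property of the $\V$-independent set $f(X)$ extends to a homomorphism $\phi:\langle f(X)\rangle\to\langle X\rangle$. The composite $\phi\circ(f\restriction\langle X\rangle)$ fixes every element of $X$, hence is the identity on $\langle X\rangle$; this exhibits a left inverse and proves $f\restriction\langle X\rangle$ is injective. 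Consequently $f$ carries $X$ bijectively onto $f(X)$ and $\langle X\rangle$ isomorphically onto $\langle f(X)\rangle$. Transporting the universal property of $f(X)$ back along this isomorphism (given $g:X\to K\in\V$, form $g':f(X)\to K$ by $g'(f(x))=g(x)$, extend it to $\langle f(X)\rangle$, and precompose with $f\restriction\langle X\rangle$) then yields the required extension, so $X$ is $\V$-independent in $G$.

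The main obstacle I anticipate lies entirely in part (i): the reverse implication rests on the fact that membership in a variety is a ``local,'' equationally checkable property that is preserved under directed unions, which is precisely where Birkhoff's equational description of varieties is essential; everything else reduces to the principle that homomorphisms are determined on generators. In (ii) the only non-formal step is recognizing that the hypothesis $\langle X\rangle\in\V$ is exactly what licenses the use of the universal property of $f(X)$ to manufacture the left inverse $\phi$.
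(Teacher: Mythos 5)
Your argument is correct, but note that the paper itself contains no proof to compare against: its ``proof'' of this lemma is a citation, namely that item (i) is \cite[Lemma 2.3]{DiS}, the first conclusion of item (ii) is \cite[Lemma 2.4]{DiS}, and the second conclusion ``can be proved similarly.'' What you have written is a sound, self-contained reconstruction of the arguments that the citation hides. Your three key moves are all legitimate and are essentially the standard ones: the observation that uniqueness of $\tilde f$ is automatic (a homomorphism on $\langle X\rangle$ is determined by its values on $X$) correctly reduces everything to existence of extensions plus membership in $\V$; the directed-union gluing $\langle X\rangle=\bigcup\{\langle F\rangle: F\in[X]^{<\omega}\}$ handles the extension in (i); and the retraction trick in (ii) --- extending $f(x)\mapsto x$ to a homomorphism $\phi:\langle f(X)\rangle\to\langle X\rangle$, which is licensed precisely because the target $\langle X\rangle$ lies in $\V$, and noting that $\phi\circ f$ is the identity on generators --- is exactly the right way to get injectivity of $f\restriction\langle X\rangle$, after which $\V$-independence of $X$ transports back along the resulting isomorphism. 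The only place where you go beyond purely formal reasoning is the step $\langle X\rangle\in\V$ in the reverse direction of (i), where you invoke Birkhoff's theorem that $\V$ is an equational class. That is perfectly valid, but if one prefers to stay within the paper's definition of a variety (a class closed under products, subgroups and quotients), one can avoid Birkhoff: the directed union $\bigcup_F\langle F\rangle$ is a quotient of the subgroup of $\prod_{F\in[X]^{<\omega}}\langle F\rangle$ consisting of eventually constant threads, so HSP-closure alone yields $\langle X\rangle\in\V$. Either way, there is no gap, and your proof almost certainly mirrors the arguments in \cite{DiS} that the paper defers to.
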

\begin{proof}
Item (i) is \cite[Lemma 2.3]{DiS}. The first statement in the conclusion of item (ii) is \cite[Lemma 2.4]{DiS} and the second statement can be proved similarly to the proof of \cite[Lemma 2.4]{DiS}.
\end{proof}

The next two lemmas are the main technical tool in this  paper. 

\begin{lemma}\label{indep_seq_pseudo}
Let $\V$ be a variety of groups. Suppose that $H\in\V$ is a compact metric group with $r_\V(H)\geq\omega$. Let $\sigma$ be
an
infinite cardinal such that  $\sigma ^{\omega }=\sigma $.
Then there exist a dense $\mathcal V$-independent \ssp\  subset $Y$ of $H^{\sigma}$ and a set $C\subseteq \sigma$
such that $|Y|= |C|=|\sigma\setminus C|=\sigma$ and 
$\langle Y\rangle\cap H^{C}=\{e\}$, where 
\begin{equation}
\label{def:H^C}
H^{C}=\{f\in H^{\sigma}:f(\gamma)=e\mbox{ for all }\gamma\in \sigma\setminus C\}.
\end{equation}
\end{lemma}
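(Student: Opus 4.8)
The plan is to convert the two algebraic requirements on $Y$ into a single, more tractable one, and then to build $Y$ by a transfinite recursion of length $\sigma$ inside the compact group $H^\sigma$. First I would fix a partition $\sigma=C\sqcup D$ with $|C|=|D|=\sigma$ and let $\pi_D\colon H^\sigma\to H^D$ be the projection, so that $\ker\pi_D=H^C$ in the notation of \eqref{def:H^C}. Since $\V$ is a variety and $H\in\V$, the group $H^\sigma$ and all of its subgroups lie in $\V$; in particular $\langle Y\rangle\in\V$ for every $Y\subseteq H^\sigma$. Consequently, if I can arrange that $\pi_D(Y)$ is a $\V$-independent subset of $H^D$ of cardinality $\sigma$ on which $\pi_D\restriction Y$ is injective, then Lemma~\ref{lemma2.4_DiS}(ii) (applied with $f=\pi_D$) will simultaneously give that $Y$ itself is $\V$-independent and that $\pi_D\restriction\langle Y\rangle$ is injective, i.e. $\langle Y\rangle\cap H^C=\ker(\pi_D\restriction\langle Y\rangle)=\{e\}$. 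Thus the whole problem reduces to constructing a dense, \ssp\ set $Y\subseteq H^\sigma$ of size $\sigma$ whose $\pi_D$-image is $\V$-independent of size $\sigma$.

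For the recursion I would first record the combinatorial data. As $H$ is compact and metric, $H^\sigma$ has a base $\mathcal B$ of non-empty basic open sets with $|\mathcal B|=\sigma$, each depending on only finitely many coordinates. Using the hypothesis $\sigma^\omega=\sigma$ I can enumerate in a single list of length $\sigma$ both the members of $\mathcal B$ (the density tasks) and all countable sequences $(W_n)_{n\in\N}$ of members of $\mathcal B$ (the \ssp\ tasks), each task occurring cofinally. I then build $Y=\bigcup_{\xi<\sigma}Y_\xi$ as an increasing union, where at stage $\xi$ I handle the $\xi$-th task: for a density task $W\in\mathcal B$ I add one point of $Y$ lying in $W$, and for an \ssp\ task $(W_n)_n$ I add a sequence $(x_n)_n$ with $x_n\in W_n$ together with a limit point $x_\infty$ so that $x_n\to x_\infty$ in $H^\sigma$. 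The point of pre-building these sequences is that, $Y$ being dense, every sequence $(U_n)_n$ of non-empty open subsets of $Y$ refines to a sequence $(W_n)_n$ of members of $\mathcal B$ with $W_n\cap Y\subseteq U_n$; the witnesses $x_n\in W_n\cap Y$ added at the corresponding stage then lie in $U_n$ and converge in $Y$, which is exactly what Definition~\ref{definition_1} demands. Density and the cardinality bookkeeping ($|Y|=\sigma$, with at most $\sigma\cdot\omega=\sigma$ points added and at least $\sigma$ forced by density) are immediate from this scheme, so the content is concentrated in the single step, where I must produce the required points while preserving the invariant that $\pi_D(Y_\xi)$ is $\V$-independent in $H^D$.

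The single step splits into an easy analytic half and a hard algebraic half. The analytic half—producing $x_n\in W_n$ converging to some $x_\infty$—is handled by the compact metric structure: the sets $W_n$ constrain only a countable set $S\subseteq\sigma$ of coordinates, off $S$ I make all the new points equal, and on $S$ I use that $H$ is metric to choose coordinate values in $H$ meeting the finitely many constraints defining each $W_n$ and converging coordinatewise, so that $x_n\to x_\infty$. The hard algebraic half is to choose the $\pi_D$-values of the new points so that, added to the finitely many old generators they interact with, they remain $\V$-independent; by the finite-character criterion Lemma~\ref{lemma2.4_DiS}(i) it suffices to control all finite subsets. Here $r_\V(H)\ge\omega$ enters, furnishing a countably infinite $\V$-independent set $\{a_i:i\in\N\}\subseteq H$, i.e. a copy of $F_\V(\omega)=\langle a_i\rangle$ inside $H$, from which the new $\pi_D$-values are assembled on fresh blocks of $D$-coordinates reserved for stage $\xi$.

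I expect the genuine obstacle to be precisely this algebraic half for a \emph{non-abelian} variety $\V$. One cannot witness $\V$-independence coordinatewise: placing each new generator on its own private coordinate realizes only the \emph{direct} product of copies of $F_\V(1)$, which for non-abelian $\V$ is far from the relatively free group $F_\V(\sigma)$ that $\langle\pi_D(Y)\rangle$ must be. The new generators must therefore be spread, with overlapping supports, so as to realize the relatively free structure—concretely, via the fact that $F_\V(\sigma)$ is residually $F_\V(\omega)$ (every non-trivial word involves finitely many generators and hence survives a coordinate retraction), which embeds $F_\V(\sigma)$ into a $\sigma$-power of $F_\V(\omega)\subseteq H$. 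The real work is to carry out this embedding \emph{compatibly} with the prescribed open sets and with coordinatewise convergence, so that the algebraically forced values and the topologically forced values never collide. Once the invariant is maintained through all $\sigma$ stages, $\pi_D(Y)=\bigcup_{\xi<\sigma}\pi_D(Y_\xi)$ is $\V$-independent by Lemma~\ref{lemma2.4_DiS}(i), and the reduction of the first paragraph completes the proof.
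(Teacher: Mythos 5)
Your opening reduction is sound (since every subgroup of $H^\sigma$ lies in $\V$, Lemma~\ref{lemma2.4_DiS}(ii) applied to $\pi_D$ does convert the two algebraic requirements into one), and your bookkeeping scheme is essentially the paper's: it too fixes a base $\B$ of basic sets with $|\B|=\sigma$, uses $\sigma^\omega=\sigma$ to enumerate $\B^\omega$, and pre-selects convergent subsequences using compact metrizability of $H^{S_\alpha}$. The genuine gap is that the step you yourself call ``the real work'' --- making the algebraically forced values on $D$-coordinates coexist with coordinatewise convergence for an arbitrary, possibly non-Abelian, variety --- is the entire content of the lemma, and it is left unproved. Moreover, your two halves contradict each other as written: in the analytic half you make all points of a new convergent block \emph{equal} off the countable set $S$, while in the algebraic half those same points must receive \emph{distinct}, jointly $\V$-independent values on fresh $D$-coordinates, which lie off $S$. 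If you keep the block constant off $S$, its independence must be witnessed inside $S$, where convergence constrains the values, and the hypothesis $r_\V(H)\ge\omega$ does not obviously provide a convergent sequence of independent elements of $H$ that remains independent together with its limit. Finally, the recursion invariant ``$\pi_D(Y_\xi)$ is $\V$-independent'' is too weak to be maintainable: a finite set mixing points from different stages needs a witnessing projection, and at stage $\xi$ the old points' coordinates are already frozen, so earlier stages would have had to reserve coordinates prepared for all future finite sets --- a provision your scheme never makes.

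The paper's device resolves exactly these collisions. Index all prospective points (sequences together with their limits) by $\sigma\times(\omega+1)$, and enumerate $[\sigma\times(\omega+1)]^{<\omega}=\{F_\gamma:\gamma\in D\}$ so that every finite set occurs for $\sigma$ many $\gamma$. On a coordinate $\gamma\in D\setminus S_\alpha$ put $y_{\alpha,n}(\gamma)=h_\gamma(\alpha,n)$ if $(\alpha,n)\in F_\gamma$; put $y_{\alpha,n}(\gamma)=h_\gamma(\alpha,\omega)$ (the value of its own limit) if $(\alpha,n)\notin F_\gamma$ but $(\alpha,\omega)\in F_\gamma$; and put $y_{\alpha,n}(\gamma)=e$ otherwise, where $h_\gamma$ is an injection of $F_\gamma$ onto a $\V$-independent subset of $H$ (this is where $r_\V(H)\ge\omega$ is used). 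Thus the points of a block \emph{do} differ off $S_\alpha$, but on each single coordinate only the finitely many points indexed in $F_\gamma$ differ from the limit's value, so every sequence is eventually constant coordinatewise off $S_\alpha$ and convergence is automatic --- the algebra never fights the topology. Independence is then verified finite-set-by-finite-set: given finite $X\subseteq Y$ indexed by $F$, choose $\gamma\in D$ with $F_\gamma=F$ avoiding the countable union of the relevant supports (possible because $F$ recurs $\sigma>\omega$ times); the single projection $\pi_\gamma$ maps $X$ injectively onto the independent set $h_\gamma(F)$, so Lemma~\ref{lemma2.4_DiS}(ii) gives both that $X$ is $\V$-independent and that $\pi_\gamma$ is injective on $\langle X\rangle$, whence $\langle X\rangle\cap H^C=\{e\}$; Lemma~\ref{lemma2.4_DiS}(i) then finishes. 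This eventual-constancy mechanism, with each finite index set assigned its own dedicated coordinates occurring cofinally, is precisely the missing content of your proposal.
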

\proof
A subset $B$ of $H^{\sigma}$ shall be called {\em basic\/}
if $B=\prod_{\gamma\in\sigma} B_\gamma$, 
where $B_\gamma\subseteq H$ for all $\gamma\in\sigma$ and the set
$\mathrm{supp}(B)=\{\gamma\in\sigma: B_\gamma\not= H\}$ is finite.

Let $\B$ be a base for $H^{\sigma}$ of size $\sigma $ consisting of  basic open sets.
Since $|\mathcal{B}^\omega|=|\mathcal{B}|^\omega=\sigma^\omega=\sigma$,
we can enumerate 
\begin{equation}
\label{eq:U}
\mathcal{B}^\omega
=\{\{U_{\alpha ,n}:n\in\N \}: \alpha \in \sigma \}.
\end{equation}

Let $\alpha \in\sigma$. 
Since supp$(U_{\alpha ,n})$
is finite for every $n\in \N,$ 
the set 
$S_\alpha =\bigcup _{n\in\N }\mathrm{supp}(U_{\alpha ,n})$ 
is a countable subset of $\sigma.$
For every $n\in\N $ take a point $x_{\alpha ,n}\in U_{\alpha ,n}.$ 
Since $H^{S_\alpha }$ is compact metric,  
there are 
an infinite set $J_\alpha  \subseteq \N $ and a point 
$x_{\alpha,\omega }\in H^\sigma$ 
such that
  $\{x_{\alpha,n}\restriction_{S_\alpha}:n\in J_\alpha\}$ converges to 
  $x_{\alpha,\omega }\restriction_{S_\alpha}$.

Since $\sigma$ is infinite, there exists a 
subset $C$ of $\sigma$ such that $|C|=|D|=\sigma$, 
where $D=\sigma\setminus C.$ 
Since the set  $[\sigma\times(\omega+1)]^{<\omega}$ of all finite subsets of $\sigma\times(\omega+1)$
has cardinality $\sigma=|D|$,
we can fix an enumeration 
$$[\sigma\times(\omega+1)]^{<\omega}
=\{F_\gamma :\gamma\in D\}
$$
such that
\begin{equation}
\label{cofinitely:many}
|\{\gamma\in D: F_\gamma=F\}|=\sigma
\mbox{ for every } 
F\in [\sigma\times(\omega+1)]^{<\omega }.
\end{equation}

Since $r_\V(H)\ge\omega$, 
for every $\gamma\in\sigma$ we can fix an injection $h_\gamma: F_\gamma\to H$ such that $h_\gamma(F_\gamma)$ is a $\V$-independent subset of $H$. 

For every $(\alpha,n)\in \sigma\times(\omega+1)$,
define $y_{\alpha,n}\in H^\sigma$ by
\begin{equation}
\label{eq:y}
y_{\alpha,n}(\gamma)=
\left\{\begin{array}{ll}
x_{\alpha,n}(\gamma) & \mbox{ if } \gamma\in S_\alpha;\\
h_\gamma(\alpha,n) & \mbox{ if }\gamma\in D\setminus S_\alpha  \mbox{ and }
(\alpha,n)\in F_\gamma;\\
h_{\gamma}(\alpha,\omega) & \text{ if }\gamma\in D\setminus S_\alpha, (\alpha,n)\not\in F_\gamma  \mbox{ and }
(\alpha,\omega)\in F_\gamma;\\
e & \mbox{ otherwise} 
\end{array}
\right.
\end{equation}
for all $\gamma\in\sigma$.
Finally, let
\begin{equation}
\label{eq:Y}
Y=\{y_{\alpha ,n}:(\alpha,n)\in\sigma\times(\omega+1) \}.
\end{equation}

\begin{claim}
\label{claim:1a}
$y_{\alpha,n}\in U_{\alpha,n}$ for every $(\alpha,n)\in \sigma\times \omega$.
\end{claim}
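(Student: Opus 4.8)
The plan is to verify the membership $y_{\alpha,n}\in U_{\alpha,n}$ coordinate by coordinate, exploiting that every element of the base $\B$ is a basic open set. First I would fix $(\alpha,n)\in\sigma\times\omega$ and write the basic open set $U_{\alpha,n}$ as a product $U_{\alpha,n}=\prod_{\gamma\in\sigma}V_\gamma$, where $V_\gamma\subseteq H$ for all $\gamma\in\sigma$ and $V_\gamma=H$ whenever $\gamma\in\sigma\setminus\mathrm{supp}(U_{\alpha,n})$. With this factorization, the required membership $y_{\alpha,n}\in U_{\alpha,n}$ is equivalent to the family of conditions $y_{\alpha,n}(\gamma)\in V_\gamma$, one for each $\gamma\in\sigma$, so the whole task reduces to checking these conditions.

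Next I would split the coordinates into two groups. For $\gamma\in\sigma\setminus\mathrm{supp}(U_{\alpha,n})$ there is nothing to prove, since $V_\gamma=H$ and $y_{\alpha,n}(\gamma)\in H$ trivially. The key observation, and really the only point that needs to be made, is that the remaining coordinates lie inside $S_\alpha$: indeed $\mathrm{supp}(U_{\alpha,n})\subseteq\bigcup_{m\in\N}\mathrm{supp}(U_{\alpha,m})=S_\alpha$ directly from the definition of $S_\alpha$. Consequently, for every $\gamma\in\mathrm{supp}(U_{\alpha,n})$ we have $\gamma\in S_\alpha$, so the very first clause of the piecewise definition \eqref{eq:y} is the one that applies, yielding $y_{\alpha,n}(\gamma)=x_{\alpha,n}(\gamma)$.

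Finally I would invoke the choice of the witnessing points. Since $x_{\alpha,n}$ was selected so that $x_{\alpha,n}\in U_{\alpha,n}$, we have $x_{\alpha,n}(\gamma)\in V_\gamma$ for each $\gamma$, and in particular for $\gamma\in\mathrm{supp}(U_{\alpha,n})$. Combining this with the previous step gives $y_{\alpha,n}(\gamma)=x_{\alpha,n}(\gamma)\in V_\gamma$ on the support, while the off-support coordinates were already handled. This exhausts all $\gamma\in\sigma$ and completes the verification.

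I do not expect any genuine obstacle here: the claim is a routine unwinding of the definition \eqref{eq:y}. The one point worth flagging for correctness is that the four clauses of \eqref{eq:y} are mutually exclusive, so that the condition $\gamma\in S_\alpha$ unambiguously triggers the first clause regardless of whether $\gamma\in D$ or $(\alpha,n)\in F_\gamma$; this is exactly what makes the argument clean. It is also worth noting that since $n$ ranges over $\omega$ rather than $\omega+1$, the point $x_{\alpha,n}$ is indeed one of the points chosen earlier from $U_{\alpha,n}$, so no issue arises at the ``limit'' index.
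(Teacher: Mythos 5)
Your proof is correct and is essentially the paper's own argument: the paper's one-line proof rests on exactly the three facts you isolate, namely $\mathrm{supp}(U_{\alpha,n})\subseteq S_\alpha$, $x_{\alpha,n}\in U_{\alpha,n}$, and $y_{\alpha,n}\restriction_{S_\alpha}=x_{\alpha,n}\restriction_{S_\alpha}$ from \eqref{eq:y}; you merely unwind the coordinatewise verification that the paper leaves implicit. Your closing remarks (mutual exclusivity of the clauses via the priority of the $\gamma\in S_\alpha$ case, and $n$ ranging over $\omega$ rather than $\omega+1$) are accurate but not needed beyond what the paper states.
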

\begin{proof}
Since $\mathrm{supp}(U_{\alpha,n})\subseteq S_\alpha$,
$x_{\alpha,n}\in U_{\alpha,n}$ and 
$y_{\alpha,n}\restriction_{S_\alpha}=x_{\alpha,n}\restriction_{S_\alpha}$
by \eqref{eq:y},
we get $y_{\alpha,n}\in U_{\alpha,n}$.
\end{proof}

\begin{claim}
\label{claim:2}
$Y$ is $\V$-independent and $\langle Y\rangle\cap H^{C}=\{e\}$.
\end{claim}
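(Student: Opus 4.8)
The plan is to reduce both assertions to a single device: for any finite set of the chosen generators, locate a coordinate $\gamma$ in $D=\sigma\setminus C$ at which the projection $\pi_\gamma\colon H^\sigma\to H$ behaves perfectly, and then invoke Lemma \ref{lemma2.4_DiS}(ii). Since $H\in\V$ and varieties are closed under products and subgroups, $H^\sigma\in\V$, so $\langle Y\rangle\in\V$ and more generally $\langle Z\rangle\in\V$ for every $Z\subseteq Y$; this takes care of hypothesis (a) of that lemma automatically and will be used silently throughout.

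First I would establish $\V$-independence of $Y$. By Lemma \ref{lemma2.4_DiS}(i) it suffices to prove that every finite subset of $Y$ is $\V$-independent, so I fix a finite set $F\in[\sigma\times(\omega+1)]^{<\omega}$ of pairwise distinct pairs and put $Y_F=\{y_{\alpha,n}:(\alpha,n)\in F\}$. Using \eqref{cofinitely:many} together with the fact that $\bigcup\{S_\alpha:(\alpha,n)\in F\}$ is a countable subset of $\sigma$, I would choose $\gamma\in D$ with $F_\gamma=F$ and $\gamma\notin S_\alpha$ for every $(\alpha,n)\in F$; there are $\sigma$-many candidates $\gamma$ with $F_\gamma=F$ and only countably many coordinates to avoid, so such a $\gamma$ exists. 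For this $\gamma$ the second clause of \eqref{eq:y} yields $\pi_\gamma(y_{\alpha,n})=y_{\alpha,n}(\gamma)=h_\gamma(\alpha,n)$ for all $(\alpha,n)\in F$. As $h_\gamma\restriction F$ is injective and $h_\gamma(F_\gamma)=\pi_\gamma(Y_F)$ is a $\V$-independent subset of $H$, Lemma \ref{lemma2.4_DiS}(ii), applied to the homomorphism $\pi_\gamma$, shows that $Y_F$ is $\V$-independent, completing this part.

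For the inclusion $\langle Y\rangle\cap H^C=\{e\}$ I would reuse the very same coordinate. Let $w\in\langle Y\rangle\setminus\{e\}$; then $w\in\langle Y_F\rangle$, where $F$ is the finite set of indices of the generators occurring in a representation of $w$, and of course $w\neq e$ in $\langle Y_F\rangle$. Choosing $\gamma\in D$ exactly as above, Lemma \ref{lemma2.4_DiS}(ii) additionally guarantees that $\pi_\gamma\restriction\langle Y_F\rangle$ is injective, whence $w(\gamma)=\pi_\gamma(w)\neq e$. Since $\gamma\in D=\sigma\setminus C$, this means $w\notin H^C$ by \eqref{def:H^C}. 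Thus no non-identity element of $\langle Y\rangle$ can lie in $H^C$, which is the desired conclusion.

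The one step requiring genuine care — and the only real obstacle — is the selection of $\gamma$: I must secure simultaneously that $F_\gamma$ equals the prescribed finite set $F$, so that the $h_\gamma$-clause of \eqref{eq:y} fires on every pair of $F$, and that $\gamma$ avoids the countably many ``frozen'' coordinates in $\bigcup\{S_\alpha:(\alpha,n)\in F\}$, so that no value $y_{\alpha,n}(\gamma)$ is instead dictated by the $x_{\alpha,n}$-clause. The redundancy engineered into the enumeration through \eqref{cofinitely:many} is exactly what makes this simultaneous demand satisfiable. Once $\gamma$ is pinned down, both conclusions of Lemma \ref{lemma2.4_DiS}(ii) — the $\V$-independence of $Y_F$ and the injectivity of $\pi_\gamma$ on $\langle Y_F\rangle$ — deliver the two halves of the claim with no further computation.
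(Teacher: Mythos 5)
Your proof is correct and follows essentially the same route as the paper's own argument: reduction to finite subsets via Lemma \ref{lemma2.3_DiS}(i), selection of a coordinate $\gamma\in D$ with $F_\gamma=F$ avoiding the countably many support coordinates (possible since $\sigma^\omega=\sigma$ forces $\sigma$ to be uncountable), and then both conclusions of Lemma \ref{lemma2.3_DiS}(ii) applied to the projection $\pi_\gamma$. The only cosmetic difference is that the paper treats $\langle X\rangle\cap H^C=\{e\}$ finite-subset-by-finite-subset while you phrase it via an arbitrary nonidentity word $w\in\langle Y\rangle$; these are the same argument.
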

\begin{proof}
By 
Lemma \ref{lemma2.3_DiS}~(i), 
in order to prove that $Y$ is $\V$-independent, 
it suffices to show that every finite subset $X$ of $Y$
is $\V$-independent.
Similarly, in order to show that  $\langle Y\rangle\cap H^{C}=\{e\}$, it suffices to check that
$\langle X\rangle\cap H^{C}=\{e\}$ for every finite subset $X$ of $Y$.
Therefore, 
we fix an arbitrary finite subset $X$ of $Y$, and we are going to
prove that $X$ is $\V$-independent and satisfies $\langle X\rangle\cap H^{C}=\{e\}$.

Since $X$ is a finite subset of $Y$, we can use
\eqref{eq:Y} to fix a
finite set $F\subseteq  \sigma\times(\omega+1)$
such that
\begin{equation}
\label{eq:X}
X=\{y_{\alpha,n}:(\alpha,n)\in F\}.
\end{equation}
Since $F$ is finite, so is the set $A=\{\alpha\in\sigma: (\alpha,n)\in F$ for some $n\in\omega+1\}$. Therefore,
the set $S=\bigcup_{\alpha\in A} S_\alpha$ is at most countable.
Since $\sigma^\omega=\sigma\ge\omega$, the cardinal $\sigma$ is uncountable. Since $|\{\gamma\in D: F_\gamma=F\}|=\sigma$ by
\eqref{cofinitely:many}, there exists 
$\gamma \in D \setminus S$ such that $F_\gamma=F$.
Let $\pi_\gamma:H^\sigma\to H$ be the projection on the $\gamma$'s coordinate.

To prove that $X$ is $\V$-independent, it suffices to check 
that $G=H^\sigma$, $X$ and $f=\pi_\gamma$ satisfy the assumptions of  Lemma \ref{lemma2.3_DiS}~(ii).

Since $\V$ is a variety of groups, it is closed under taking arbitrary products and subgroups.
Since
$H\in\V$ and $\langle X\rangle$ is a subgroup of $H^\sigma$, this implies $\langle X\rangle\in\V$.
 
Let $g:F\to X$ be the map defined by $g(\alpha,n)=y_{\alpha,n}$ for $(\alpha,n)\in F$.
Suppose that $(\alpha,n)\in F=F_\gamma$. Since $S_\alpha\subseteq S$ and $\gamma\in D\setminus S$, we get $\gamma\in D\setminus S_\alpha$, so
$$
\pi_\gamma(g(\alpha,n))=\pi_\gamma(y_{\alpha,n})=y_{\alpha,n}(\gamma)=h_\gamma(\alpha,n)
$$
 by \eqref{eq:y}. 
This shows that $\pi_\gamma\circ g=h_\gamma$.
Since $h_\gamma$ is an  injection, so is $\pi_\gamma\circ g=\pi_\gamma\restriction_X\circ g$. 
Since $g$ is a surjection, this implies that $\pi_\gamma\restriction_X$ is an injection.
Finally, $\pi_\gamma(X)=\pi_\gamma(g(F))=h_\gamma(F)$ is a $\V$-independent subset of $H$
by the choice of $h_\gamma$. 

Applying Lemma \ref{lemma2.3_DiS}~(ii), we conclude that 
$X$ is $\V$-independent and 
$\pi_\gamma\restriction_{\langle X\rangle}$ is an injection.

 Let $x\in \langle X\rangle\setminus \{e\}$ be arbitrary.
Then $x(\gamma)=\pi_\gamma(x)\not=e,$ because $ \pi_\gamma$
is injective on $\langle X\rangle.$ 
 Since $\gamma\in D=\sigma\setminus C$, it follows from \eqref{def:H^C} that  $x\not\in H^C$.
 This proves the equation
$\langle X\rangle\cap H^{C}=\{e\}$.
\end{proof} 

\begin{claim}
$Y$ is \ssp.
\end{claim}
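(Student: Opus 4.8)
The plan is to exploit the fact that the enumeration \eqref{eq:U} lists \emph{all} countable sequences of basic open sets, so that any prescribed sequence of open subsets of $Y$ can be matched with some index $\alpha\in\sigma$, after which the already-constructed point $y_{\alpha,\omega}$ serves as the limit of the required convergent subsequence.

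First I would reduce to basic open sets. Let $\{V_k:k\in\N\}$ be an arbitrary sequence of non-empty open subsets of $Y$. For each $k$ I pick a point of $V_k$ and shrink a neighbourhood of it to a member $W_k\in\B$ of the chosen base with $\emptyset\neq W_k\cap Y\subseteq V_k$. Then $\{W_k:k\in\N\}\in\B^\omega$, so by \eqref{eq:U} there is $\alpha\in\sigma$ with $U_{\alpha,n}=W_n$ for every $n\in\N$. By Claim \ref{claim:1a} we have $y_{\alpha,n}\in U_{\alpha,n}=W_n$, and since $y_{\alpha,n}\in Y$ this gives $y_{\alpha,n}\in W_n\cap Y\subseteq V_n$. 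Thus choosing the point $y_{\alpha,n}\in V_n$ for each $n$ reduces the whole matter to a single assertion: the sequence $\{y_{\alpha,n}:n\in\N\}$ has a convergent subsequence in $Y$.

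The candidate subsequence is $\{y_{\alpha,n}:n\in J_\alpha\}$ and the candidate limit is $y_{\alpha,\omega}$, which already belongs to $Y$; hence it suffices to prove convergence in $H^\sigma$. Since $H^\sigma$ carries the product topology, convergence is equivalent to coordinatewise convergence, so I would fix $\gamma\in\sigma$ and verify $y_{\alpha,n}(\gamma)\to y_{\alpha,\omega}(\gamma)$ along $J_\alpha$ by running through the cases of \eqref{eq:y}. If $\gamma\in S_\alpha$, then $y_{\alpha,n}(\gamma)=x_{\alpha,n}(\gamma)$ and $y_{\alpha,\omega}(\gamma)=x_{\alpha,\omega}(\gamma)$, and convergence along $J_\alpha$ is precisely the chosen convergence $x_{\alpha,n}\restriction_{S_\alpha}\to x_{\alpha,\omega}\restriction_{S_\alpha}$. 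If $\gamma\in C\setminus S_\alpha$, both values equal $e$, so the sequence is constant.

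The main point, and the only genuinely delicate part, is the case $\gamma\in D\setminus S_\alpha$. Here I would use that $F_\gamma$ is finite, so the set $\{n\in\omega:(\alpha,n)\in F_\gamma\}$ is finite; hence for all but finitely many $n$ the value $y_{\alpha,n}(\gamma)$ falls into the third or fourth case of \eqref{eq:y}, according to whether $(\alpha,\omega)\in F_\gamma$ or not. In the first alternative it equals $h_\gamma(\alpha,\omega)$, which is exactly $y_{\alpha,\omega}(\gamma)$ (the second case applied to $n=\omega$); in the second it equals $e=y_{\alpha,\omega}(\gamma)$. Either way the sequence is eventually constant and equal to the limit value, so it converges. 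Having checked all coordinates, $\{y_{\alpha,n}:n\in J_\alpha\}$ converges to $y_{\alpha,\omega}$ in $H^\sigma$, and therefore in $Y$, which shows that $Y$ is \ssp.
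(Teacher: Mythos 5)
Your proof is correct and follows essentially the same route as the paper's: match the given sequence with an index $\alpha$ via the enumeration \eqref{eq:U}, take the subsequence indexed by $J_\alpha$ with limit $y_{\alpha,\omega}\in Y$, and verify coordinatewise convergence by cases on \eqref{eq:y}. The only difference is that you spell out the reduction from arbitrary non-empty open subsets of $Y$ to members of $\B^\omega$, a step the paper leaves implicit.
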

\begin{proof}
Let $\{U_n:n\in\N\}\in
\B^\omega$.
By \eqref{eq:U}, there exists $\alpha\in\sigma$ such that
$U_n=U_{\alpha,n}$ for every $n\in\N$.
Since $y_{\alpha,n}\in U_{\alpha,n}=U_n$ for every $n\in\N$
by Claim \ref{claim:1a},
it suffices to prove that the subsequence
$\{y_{\alpha ,n}:n\in J_\alpha  \}$ of the sequence $\{y_{\alpha,n}:n\in\N\}\subseteq Y$
converges to $y_{\alpha ,\omega }\in Y$. 
Note that  the sequence $\{y_{\alpha,n}\restriction_{S_\alpha}:n\in J_\alpha\}=\{x_{\alpha,n}\restriction_{S_\alpha}:n\in J_\alpha\}$ converges to 
  $x_{\alpha,\omega }\restriction_{S_\alpha}=y_{\alpha,\omega}\restriction_{S_\alpha}$
by \eqref{eq:y} and the choice of $\{x_{\alpha,n}:n\in\N\}$ and $J_\alpha$.
Therefore, it suffices to show that 
the sequence $\{y_{\alpha,n}(\gamma):n\in \N\}$ converges to 
$y_{\alpha,\omega}(\gamma)$ for every $\gamma\in \sigma\setminus S_\alpha$.
We consider two cases.

\smallskip
{\em Case 1\/}.
$\gamma\in D\setminus S_\alpha$.
If
$(\alpha,\omega)\in F_\gamma$,
then \eqref{eq:y} implies that $y_{\alpha,\omega}(\gamma)=h_\gamma(\alpha,\omega)$
and $\{n\in\N: y_{\alpha,n}(\gamma)\not=h_\gamma(\alpha,\omega)\}\subseteq
\{n\in\N:(\alpha,n)\in F_\gamma\}$.
Since $F_\gamma$ is finite, the sequence 
$\{y_{\alpha,n}(\gamma):n\in \N\}$ converges to 
$y_{\alpha,\omega}(\gamma)$.
Suppose now that $(\alpha,\omega)\not\in F_\gamma$.
Then $y_{\alpha,\omega}(\gamma)=e$ 
and $\{n\in\N: y_{\alpha,n}(\gamma)\not=e\}\subseteq
\{n\in\N:(\alpha,n)\in F_\gamma\}$
by \eqref{eq:y}.
Since $F_\gamma$ is finite, the sequence 
$\{y_{\alpha,n}(\gamma):n\in \N\}$ converges to 
$y_{\alpha,\omega}(\gamma)$.

\smallskip
{\em Case 2\/}.
$\gamma\in \sigma\setminus (D\cup S_\alpha)$.
In this case \eqref{eq:y} implies that $y_{\alpha,n}(\gamma)=e$ for all $n\in\omega+1$.
Therefore,  the sequence 
$\{y_{\alpha,n}(\gamma):n\in \N\}$ converges to 
$y_{\alpha,\omega}(\gamma)$.
\end{proof}

\begin{claim}
$Y$ is dense in $H^\sigma$.
\end{claim}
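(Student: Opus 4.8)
The plan is to reduce density to a statement about the single base $\B$. Since $\B$ is a base for $H^\sigma$ consisting of basic open sets, every non-empty open subset of $H^\sigma$ contains a non-empty member of $\B$; hence it suffices to show that $U\cap Y\neq\emptyset$ for every non-empty $U\in\B$.

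Fix such a $U$. The idea is to feed $U$ into the enumeration \eqref{eq:U} as a \emph{constant} sequence, which is legitimate because \eqref{eq:U} enumerates \emph{all} countable sequences drawn from $\B$. Concretely, set $U_n=U$ for every $n\in\N$. Then $\{U_n:n\in\N\}$ is an element of $\B^\omega$, so by \eqref{eq:U} there is some $\alpha\in\sigma$ with $U_{\alpha,n}=U_n=U$ for all $n\in\N$.

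It then remains to locate a point of $Y$ inside $U$. For this I would simply invoke Claim \ref{claim:1a} with $n=0$: since $(\alpha,0)\in\sigma\times\omega$, we have $y_{\alpha,0}\in U_{\alpha,0}=U$, while $y_{\alpha,0}\in Y$ by \eqref{eq:Y}. Thus $y_{\alpha,0}\in U\cap Y$, and as $U$ was an arbitrary non-empty basic open set, $Y$ is dense in $H^\sigma$.

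This argument presents no genuine obstacle; the only point requiring care is ensuring that the chosen $U$ is non-empty. This is automatic, since below any given non-empty open set we may always select a non-empty basic open set, and non-emptiness is in any case exactly what is needed earlier to pick the points $x_{\alpha,n}\in U_{\alpha,n}$ used in the construction of $y_{\alpha,n}$. The entire weight of the claim thus rests on the earlier insight, already exploited in the proof that $Y$ is \ssp, that the enumeration $\B^\omega$ captures constant sequences together with the coordinate values supplied by \eqref{eq:y}.
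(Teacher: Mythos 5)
Your proof is correct and follows essentially the same route as the paper: feed the constant sequence $U_n=U$ into the enumeration \eqref{eq:U}, then apply Claim \ref{claim:1a} to a single index (you use $n=0$, the paper uses $n=1$). If anything, you are slightly more careful than the paper, which reduces to an arbitrary basic open set rather than to a member of $\B$ itself; your reduction to $U\in\B$ is what the constant-sequence argument actually requires, since \eqref{eq:U} only enumerates sequences drawn from $\B$.
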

\begin{proof}
Let $V$ be a non-empty open subset of $H^\sigma$.
We need to show that $Y\cap V\not=\emptyset$.
By taking a smaller set $V$ if necessary, we may assume that
$V$ is a basic open subset of $H^\sigma$.
Let $U_n=V$ for every $n\in\N$.
Then $\{U_n:n\in\N\}\in\B^\omega$,
so by \eqref{eq:U}, there exists $\alpha\in\sigma$ such that 
$U_n=U_{\alpha,n}$ for every $n\in\N$.
Since $y_{\alpha,1}\in U_{\alpha,1}=V$ by Claim \ref{claim:1a}
and 
$y_{\alpha,1}\in Y$ by \eqref{eq:Y}, we get  $Y\cap V\not=\emptyset$.
\end{proof}

It follows from \eqref{eq:Y} and Claim \ref{claim:2} that
$|Y|=|\sigma\times(\omega+1)|=\sigma+\omega=\sigma$.
\endproof

\begin{lemma}\label{v-free-group}
Let $\sigma$ and $\tau$ be infinite cardinals satisfying $\sigma^{\omega} =\sigma\leq\tau\leq 2^{\sigma}$.
If $\V$ is a variety of Abelian groups and $H\in\V$ is a compact metric group with 
$r_\V(H)\geq\omega$, then 
there exist a dense $\mathcal V$-independent \ssp\  subset $X$ of $H^{\sigma}$ and a set $D\subseteq \sigma$
such that $|X|=\tau, |D|=|\sigma\setminus D|=\sigma$ and 
$\langle X\rangle\cap H^{D}=\{0\}$, where 
\begin{equation}
\label{eq:H^D}
H^{D}=\{f\in H^{\sigma}:f(\gamma)=0\mbox{ for all }\gamma\in \sigma\setminus D\}.
\end{equation}
\end{lemma}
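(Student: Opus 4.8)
The plan is to obtain $X$ by enlarging the set $Y$ produced by Lemma \ref{indep_seq_pseudo}, exploiting the ``room'' encoded in the condition $\langle Y\rangle\cap H^{C}=\{0\}$. First I would apply Lemma \ref{indep_seq_pseudo} to get a dense $\V$-independent \ssp\ subset $Y$ of $H^{\sigma}$ together with a set $C\subseteq\sigma$ such that $|Y|=|C|=|\sigma\setminus C|=\sigma$ and $\langle Y\rangle\cap H^{C}=\{0\}$. Since $|C|=\sigma$, I split $C=C_{1}\cup C_{2}$ into disjoint pieces with $|C_{1}|=|C_{2}|=\sigma$ and set $D=C_{2}$; note that $|D|=\sigma$ and $|\sigma\setminus D|=|C_{1}\cup(\sigma\setminus C)|=\sigma$, as required.

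The next step is to manufacture a $\V$-independent set $Z$ of size $\tau$ living inside $H^{C_{1}}$ (i.e.\ supported on $C_{1}$). Here I would use $\tau\le 2^{\sigma}=2^{|C_{1}|}$: fix $a\in H$ with $\{a\}$ being $\V$-independent (possible since $r_\V(H)\ge\omega\ge 1$) and an independent family $\{A_{\xi}:\xi<\tau\}$ of subsets of $C_{1}$, and define $z_{\xi}\in H^{\sigma}$ by $z_{\xi}(\gamma)=a$ for $\gamma\in A_{\xi}$ and $z_{\xi}(\gamma)=0$ otherwise. For any distinct $\xi_{1},\dots,\xi_{m}$, independence of the family supplies coordinates $\gamma_{j}\in A_{\xi_{j}}\setminus\bigcup_{j'\ne j}A_{\xi_{j'}}$, so the projection $(\pi_{\gamma_{1}},\dots,\pi_{\gamma_{m}})$ carries $\{z_{\xi_{1}},\dots,z_{\xi_{m}}\}$ onto the ``diagonal'' set $\{a e_{j}:j\le m\}$ in $H^{m}$, where $a e_{j}$ has $a$ in the $j$-th place and $0$ elsewhere. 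As $\{a\}$ is $\V$-independent and $\V$ is Abelian, this diagonal set is easily seen to be $\V$-independent, so Lemma \ref{lemma2.3_DiS} yields that every finite subset, hence (by Lemma \ref{lemma2.3_DiS}~(i)) the whole set $Z=\{z_{\xi}:\xi<\tau\}$, is $\V$-independent.

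With $Y$ and $Z$ in hand I set $X=Y\cup Z$. The key algebraic point is that $\langle Z\rangle\subseteq H^{C_{1}}\subseteq H^{C}$, whence $\langle Y\rangle\cap\langle Z\rangle\subseteq\langle Y\rangle\cap H^{C}=\{0\}$ and therefore $\langle X\rangle=\langle Y\rangle\oplus\langle Z\rangle$ is an internal direct sum. Because $\V$ consists of Abelian groups, any pair of homomorphic extensions of a given map on a finite subset of $Y$ and on a finite subset of $Z$ combines to a homomorphism on the corresponding direct summand, and uniqueness is inherited from each factor; via Lemma \ref{lemma2.3_DiS}~(i) this shows that $X$ is $\V$-independent. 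Moreover $Y\cap Z=\emptyset$ (any $z\in Z\cap Y$ would lie in $\langle Y\rangle\cap H^{C}=\{0\}$) and $|Z|=\tau\ge\sigma=|Y|$, so $|X|=\tau$. To verify $\langle X\rangle\cap H^{D}=\{0\}$ with $D=C_{2}$, take $x=y+z$ with $y\in\langle Y\rangle$ and $z\in\langle Z\rangle\subseteq H^{C_{1}}$ and suppose $x\in H^{C_{2}}$. Evaluating on $\sigma\setminus C$ gives $y\restriction_{\sigma\setminus C}=0$, whence $y=0$ because $\langle Y\rangle\cap H^{C}=\{0\}$; evaluating the remaining $x=z$ on $C_{1}$ then forces $z=0$, so $x=0$.

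Finally, density and selective sequential pseudocompactness come for free: since $Y\subseteq X\subseteq H^{\sigma}$ and $Y$ is dense in $H^{\sigma}$, the set $Y$ is a dense \ssp\ subspace of both $X$ and $H^{\sigma}$, so $X$ is dense in $H^{\sigma}$ and, by Proposition \ref{dense:ssp}, $X$ is \ssp. The main obstacle is the middle step: producing a $\V$-independent set of the full size $\tau$ up to $2^{\sigma}$ while \emph{simultaneously} guaranteeing that its union with $Y$ remains $\V$-independent. The device that resolves it is to house $Z$ inside $H^{C}$, where the Lemma \ref{indep_seq_pseudo} condition $\langle Y\rangle\cap H^{C}=\{0\}$ converts the union into a direct sum and makes joint $\V$-independence automatic, and to take $D=C_{2}$ so that the coordinates in $\sigma\setminus C$ detect $\langle Y\rangle$ while those in $C_{1}$ detect $\langle Z\rangle$.
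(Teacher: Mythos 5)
Your proof is correct, and its skeleton is the same as the paper's: apply Lemma \ref{indep_seq_pseudo} to get $Y$ and $C$, enlarge $Y$ by a $\V$-independent set $Z$ of size $\tau$ contained in $H^{C}$ (so that $\langle Y\rangle\cap\langle Z\rangle\subseteq\langle Y\rangle\cap H^{C}=\{0\}$), and verify that $X=Y\cup Z$ works; the final verification (the Abelian direct-sum gluing of homomorphism extensions, the transfer of density and of the \ssp\ property via Proposition \ref{dense:ssp}, and the computation of $\langle X\rangle\cap H^{D}$) is essentially identical to the paper's. Where you genuinely depart from the paper is in producing $Z$ and $D$: the paper obtains both at once by citing \cite[Lemmas 3.4 and 4.3]{DiS} as a black box, whereas you split $C=C_{1}\cup C_{2}$, take $D=C_{2}$, and build $Z$ by hand inside $H^{C_{1}}$ from an independent family of $\tau\le 2^{\sigma}$ subsets of $C_{1}$, so that $\langle Z\rangle\cap H^{D}\subseteq H^{C_{1}}\cap H^{C_{2}}=\{0\}$ holds for the trivial reason of disjoint supports. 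Your route is self-contained --- it in effect re-proves the needed portion of the cited lemmas, namely that a $\sigma$-th power of a group containing a $\V$-independent singleton has $\V$-rank at least $2^{\sigma}$ --- while the paper's route is shorter but opaque. Two details in your middle step should be written out rather than waved at, though neither is a gap: the existence of an independent family of size $2^{\sigma}$ on a set of size $\sigma$ is the classical (ZFC) Fichtenholz--Kantorovich--Hausdorff theorem and deserves a citation; and the ``easily seen'' $\V$-independence of the diagonal set $\{ae_{j}:j\le m\}$ needs its one-line argument, namely that each singleton $\{ae_{j}\}$ is $\V$-independent because $\langle ae_{j}\rangle\cong\langle a\rangle$, that the cyclic subgroups $\langle ae_{j}\rangle$ form a direct sum since they sit on disjoint coordinates, and that the same gluing of extensions you use for $Y\cup Z$ then applies; note also that Lemma \ref{lemma2.3_DiS}~(ii) additionally requires $\langle\{z_{\xi_{1}},\dots,z_{\xi_{m}}\}\rangle\in\V$, which holds since this is a subgroup of $H^{\sigma}\in\V$.
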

\begin{proof}
By Lemma \ref{indep_seq_pseudo}, 
there exist a dense $\mathcal V$-independent \ssp\ subset $Y$ of $H^{\sigma}$ and a set $C\subseteq \sigma$
such that $|Y|=|C|=|\sigma\setminus C|=\sigma$ and 
$\langle Y\rangle\cap H^{C}=\{0\}$. 

By \cite[Lemmas 3.4 and 4.3]{DiS},
there exist a $\mathcal V$-independent subset $Z$ of $H^{C}$ and a set $D\subseteq C$
such that $|Z|=\tau, |D|=|C\setminus D|=\sigma$ and 
$\langle Z\rangle\cap H^{D}=\{0\}$. Since $Z\subseteq H^{C}$ and $H^{C}$ is a subgroup of $H^{\sigma}$, we have
$\langle Z\rangle\subseteq H^{C}$.
Since $\langle Y\rangle\cap H^{C}=\{0\}$, this implies
$\langle Y\rangle\cap \langle Z\rangle=\{0\}.$

We claim that $X=Y\cup Z$ and $D$ are as required. Indeed,
$|X|=|Y|+|Z|=\sigma+\tau=\tau$.
Since $Y$ is dense in $H^\sigma$ and $Y\subseteq X$, $X$ is dense in $H^\sigma$ as well.
Since $Y$ is dense in $H^\sigma$ and $Y\subseteq X$, $Y$ is dense in $X$. Since $Y$ is \ssp, Proposition \ref{dense:ssp} implies that $X$ is \ssp\ as well.

Next, we shall prove that $\langle X\rangle\cap H^D=\{0\}$.
Since $\langle Z\rangle\cap H^{D}=\{0\}$, 
it suffices to check that $\langle X\rangle\cap H^D\subseteq \langle Z\rangle$.
Let $x\in \langle X\rangle\cap H^D$ be arbitrary. 
We are going to show that $x\in \langle Z\rangle$.
Since $x\in\langle X\rangle$, $X=Y\cup Z\subseteq H^\sigma$ and the latter group is Abelian, 
we can find 
elements $y\in \langle Y\rangle$
and $z\in \langle Z\rangle$ such that $x=y+z.$  

Consider an arbitrary $\gamma\in \sigma\setminus C$.
Since $D\subseteq C$, we have $\gamma\in \sigma\setminus D$,
and so 
$ x(\gamma)=0$ by $x\in H^D$ and \eqref{eq:H^D}.
Since $z\in H^D$ and $\langle Z\rangle\cap H^{D}=\{0\}$,
we also get $z(\gamma)=0$.
Therefore,
$0= x(\gamma)=y(\gamma)+z(\gamma)=y(\gamma)$.
Since this holds for all  $\gamma\in \sigma\setminus C$,
from \eqref{def:H^C} we conclude that $y\in H^C$.
Since $y\in\langle Y\rangle$, we get 
$y\in \langle Y\rangle\cap H^{C}=\{0\}$,
which implies $y=0.$ Therefore, $x=z\in  \langle Z\rangle.$  

Finally, we are going to check that 
the set $X=Z\cup Y$ is $\V$-independent.
To
show this, we need to check items (a) and (b) of  Definition \ref{def:V-independent}.

(a) Since
$\langle X\rangle$ is a subgroup of $H^\sigma$, $H\in\V$ and $\V$ is a variety of groups, $\langle X\rangle \in\V$. 

(b)
Let $f:X\to A\in\V$ be an arbitrary map.
Since $Y$ is $\V$-independent,
there exists a unique homomorphism $g:\langle Y\rangle\to A$ such that
$g\restriction_Y=f\restriction_Y$.
Similarly, since 
$Z$ is $\V$-independent,
there exists a unique homomorphism $h:\langle Z\rangle\to A$ such that
$h\restriction_Z=f\restriction_Z$.
Since $\langle Y\rangle\cap\langle Z\rangle =\{0\}$ and $H^\sigma$ is an Abelian group, 
$\langle X\rangle=\langle Y\rangle\oplus \langle Z\rangle$, so 
there exists a unique homomorphism
$\tilde{f}: \langle X\rangle \to A$ extending both $g$ and $h$.
Clearly, $\tilde{f}\restriction_X=f$.
\end{proof}

\section{Selectively sequentially pseudocompact topologies on $\V$-free groups}

\begin{definition}\cite{DiS}
\label{def:admissible:cardinal}
An infinite cardinal $\tau$ is called {\em admissible\/} if there exists a pseudocompact group of 
cardinality $\tau$.
\end{definition}

\begin{definition}\label{def_sa}
We shall say that a cardinal $\tau$ is {\em \sa} if either $\tau$ is finite or there exists an infinite cardinal $\sigma$ such that
$\sigma^{\omega}=\sigma\leq\tau\leq 2^{\sigma}.$
\end{definition}

We shall need the 
following lemma describing properties of (selectively) admissible cardinals.

\begin{lemma}\label{3.4DiS}\cite[Lemma 3.4]{DiS}
\begin{itemize}
\item[(i)] Infinite selectively admissible cardinals are admissible.
\item[(ii)] Under SCH, every admissible cardinal is selectively admissible.
\item[(iii)] Under GCH, $\sigma=\sigma^{\omega}$ holds for every admissible cardinal $\sigma$.
\end{itemize}
\end{lemma}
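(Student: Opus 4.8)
My plan is to combine the Comfort--Ross structure theory of pseudocompact groups with cardinal arithmetic under SCH and GCH, treating the items in the order (i), (ii), (iii), since (iii) will be deduced from (ii). For (i), fix an infinite cardinal $\sigma$ with $\sigma^{\omega}=\sigma\le\tau\le 2^{\sigma}$ witnessing that $\tau$ is \sa, and work inside the compact group $\T^{\sigma}$. The $\Sigma$-product $S=\{f\in\T^{\sigma}:|\mathrm{supp}(f)|\le\omega\}$ is a $G_\delta$-dense subgroup of $\T^{\sigma}$ with $|S|=\sigma^{\omega}=\sigma\le\tau$. Choosing $T\subseteq\T^{\sigma}$ with $|T|=\tau$ (possible since $|\T^{\sigma}|=2^{\sigma}\ge\tau$) and setting $G=\langle S\cup T\rangle$, we obtain a subgroup with $|G|=\tau$ that still contains the $G_\delta$-dense set $S$ and is therefore $G_\delta$-dense in $\T^{\sigma}$; by the Comfort--Ross theorem $G$ is pseudocompact, so $\tau$ is admissible. (Alternatively, Lemma \ref{v-free-group} applied to the variety of all Abelian groups and $H=\T$ yields a dense \ssp\ set $X$ of size $\tau$ in $\T^{\sigma}$, and then $\langle X\rangle$ is pseudocompact by Proposition \ref{dense:ssp}.)

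For (ii), let $\tau$ be an infinite admissible cardinal and fix a pseudocompact group $G$ with $|G|=\tau$. By Comfort--Ross its completion $K$ is compact and $G$ is $G_\delta$-dense in $K$; put $\sigma=w(K)=w(G)$, so $|K|=2^{\sigma}$ and hence $\tau\le 2^{\sigma}$. The only external input I need about $G_\delta$-dense sets is the Cater--Erd\H{o}s--Galvin lower bound: every $G_\delta$-dense subset of a compact group of weight $\sigma$ has cardinality at least $(\log\sigma)^{\omega}$, where $\log\kappa=\min\{\mu:2^{\mu}\ge\kappa\}$; thus $(\log\sigma)^{\omega}\le\tau$, and in particular $\tau\ge\C$. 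Now set $\nu=\log\tau$ and $\sigma'=\nu^{\omega}$. Then $(\sigma')^{\omega}=\sigma'$ and $\tau\le 2^{\nu}\le 2^{\sigma'}$, so it remains to verify $\sigma'\le\tau$ under SCH, which I do by cases: if $\nu<\C$ then $\sigma'=\C\le\tau$; if $\C\le\nu<\tau$ and $\cf(\nu)\ne\omega$ then $\sigma'=\nu<\tau$ by SCH, while if $\cf(\nu)=\omega$ then $\nu>\C$ and SCH gives $\sigma'=\nu^{+}\le\tau$; and if $\nu=\tau$ with $\cf(\tau)\ne\omega$ then $\sigma'=\tau$ by SCH. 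The only remaining possibility, $\nu=\tau$ with $\cf(\tau)=\omega$, is incompatible with admissibility: there $\tau$ is a strong limit, so $\sigma\ge\log\tau=\tau$ forces $\log\sigma\ge\tau$ and hence $(\log\sigma)^{\omega}\ge\tau^{\omega}>\tau$ by K\"onig's theorem, contradicting $(\log\sigma)^{\omega}\le\tau$. Thus $(\sigma')^{\omega}=\sigma'\le\tau\le 2^{\sigma'}$, so $\tau$ is \sa.

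For (iii), assume GCH, which implies SCH, and let $\tau$ be admissible. By (ii) there is an infinite cardinal $\sigma'$ with $(\sigma')^{\omega}=\sigma'\le\tau\le 2^{\sigma'}$; since GCH gives $2^{\sigma'}=(\sigma')^{+}$, the cardinal $\tau$ must equal $\sigma'$ or $(\sigma')^{+}$. In the first case $\tau^{\omega}=(\sigma')^{\omega}=\sigma'=\tau$; in the second $\tau=(\sigma')^{+}$ is a successor, hence a regular uncountable cardinal, so $\tau^{\omega}=\tau$ under GCH. Either way $\tau^{\omega}=\tau$.

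I expect the main obstacle to be part (ii), and specifically the lower bound $(\log\sigma)^{\omega}\le\tau$: this Cater--Erd\H{o}s--Galvin estimate is exactly what forbids a strong limit cardinal of cofinality $\omega$ from being admissible, and it is what makes the SCH case analysis close. The set-theoretic bookkeeping around singular cardinals of countable cofinality is delicate but routine once that bound is in hand.
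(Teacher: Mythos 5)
The paper never proves this lemma: it is imported verbatim from \cite[Lemma 3.4]{DiS}, so there is no internal proof to compare yours against, and your blind reconstruction should be judged as a proof of the cited result itself. Judged that way, it is correct, and it follows what is essentially the standard (Dikranjan--Shakhmatov/Comfort--Robertson) route. In (i), the $\Sigma$-product $S\subseteq\T^{\sigma}$ is indeed $G_\delta$-dense with $|S|=\sigma^{\omega}=\sigma$, and $\langle S\cup T\rangle$ is a $G_\delta$-dense subgroup of size $\tau$, hence pseudocompact by the Comfort--Ross criterion; your alternative via Lemma \ref{v-free-group} and Proposition \ref{dense:ssp} is also valid and is what the paper itself effectively does in Theorem \ref{free-abelian-ssp}. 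In (ii), the skeleton (pseudocompact $\Rightarrow$ precompact, $G$ $G_\delta$-dense in its compact completion $K$, $\tau\le|K|=2^{\sigma}$, the lower bound $\tau\ge(\log\sigma)^{\omega}$, then SCH arithmetic) is right, and each case of your analysis of $\nu=\log\tau$ checks out: Case 3 uses $\nu^{\omega}\le(\nu^{+})^{\omega}=\nu^{+}$ under SCH, and Case 5 correctly combines monotonicity of $\log$ with K\"onig's theorem. Part (iii) via (ii) and Hausdorff's formula $((\sigma')^{+})^{\omega}=(\sigma')^{+}$ is fine. Two caveats on your ``only external input.'' First, attribution/scope: Cater--Erd\H{o}s--Galvin prove the bound for Cantor cubes $\{0,1\}^{\sigma}$; for an \emph{arbitrary} compact group $K$ (which is what you need, since the completion of $G$ need not be a cube) the bound is due to Comfort--Robertson and requires the nontrivial structure-theoretic step that a compact group of weight $\sigma$ admits a continuous surjection onto a product of $\sigma$ non-trivial compact metrizable groups. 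Second, it is worth observing that the full strength of the $(\log\sigma)^{\omega}$ bound is used only in your Case 5, i.e.\ to show that no strong limit cardinal of countable cofinality is admissible; that special case has a short self-contained proof (decompose $G=\bigcup_{n}G_n$ with $|G_n|=\tau_n<\tau$, pick pairwise disjoint blocks of $(2^{\tau_n})^{+}$ coordinates, note that the projection of $G_n$ to each such block is too small to be $G_\delta$-dense there, and glue the resulting countably many unrealized countable conditions), so a referee worried about the exact form of the cited inequality could replace it by this elementary van Douwen-style argument without touching the rest of your proof.
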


\begin{definition}\cite[Definitions 2.1 and 2.2]{DiS}
\label{def:V-free}
Let $\V$ be a variety of groups.
\begin{enumerate}
\item[(i)] A subset $X$ of a group $G$ is a {\em $\V$-base} of $G$ if $X$ is $\V$-independent and  
$\langle X\rangle=G.$
\item[(ii)] A group is {\em $\V$-free} if it contains a $\V$-base.   
\item[(iii)] 
For every cardinal $\tau $, we denote by $F_\tau (\V)$ the unique (up to isomorphism) $\V$-free group having a $\V$-base of 
cardinality $\tau .$ 
\end{enumerate}
\end{definition}

\begin{lemma}
\label{lem:2.5}
Let $\V$ be a variety of groups.
Assume that $H$ 
 is a compact group, $\sigma, \tau$ are infinite cardinals and $X$ is a 
dense $\V$-independent \ssp\ subset of $H^\sigma$ such that 
$|X|=\tau$.
Then $G=\langle X\rangle\cong F_\tau(\V)$ is a dense \ssp\ $\V$-free subgroup of $H^\sigma$. Moreover, 
\begin{itemize}
\item[(i)] if $H$ is connected, then so is $G$;
\item[(ii)] if $H$ is locally connected, then so is $G$;
\item[(iii)] if $H$ is zero-dimensional, then so is $G$. 
\end{itemize}  
\end{lemma}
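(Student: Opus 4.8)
The plan is to read off the algebraic assertions and density directly from the definitions, to obtain zero-dimensionality as a hereditary property, and to reduce connectedness and local connectedness to the corresponding properties of the compact group $H^\sigma$ via pseudocompactness. Since $X$ is $\V$-independent with $\langle X\rangle=G$, it is a $\V$-base of $G$ (Definition \ref{def:V-free}(i)), so $G$ is $\V$-free; as $|X|=\tau$ and $F_\tau(\V)$ is the unique $\V$-free group with a $\V$-base of cardinality $\tau$ (Definition \ref{def:V-free}(iii)), we get $G\cong F_\tau(\V)$. Because $X\subseteq G\subseteq H^\sigma$ and $X$ is dense in $H^\sigma$, the subgroup $G$ is dense in $H^\sigma$ and $X$ is dense in $G$; as $X$ is \ssp, Proposition \ref{dense:ssp} makes $G$ \ssp.

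For (iii) no pseudocompactness is needed, since zero-dimensionality is hereditary. If $H$ has a base of clopen sets, then the basic boxes of $H^\sigma$ with finitely many clopen non-trivial coordinates are clopen and form a base, so $H^\sigma$ is zero-dimensional; monotonicity of $\mathrm{ind}$ under passage to subspaces then gives $\mathrm{ind}\,G\le\mathrm{ind}\,H^\sigma=0$.

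For (i) and (ii) density alone fails, since a dense subgroup of a connected group need not be connected, so here I would use that $G$ is \ssp, hence pseudocompact by \eqref{two:implications}. Being a dense pseudocompact subgroup of the compact group $H^\sigma$, $G$ is $G_\delta$-dense in $H^\sigma$ by the Comfort--Ross theorem, and $H^\sigma$ is the completion of $G$. I would then invoke the descent of connectivity: a $G_\delta$-dense subgroup of a compact group is connected (respectively, locally connected) whenever the compact group is. Connectedness is productive, so $H$ connected gives $H^\sigma$ connected and hence (i); for (ii) one uses that $H^\sigma$ is locally connected, which holds in the relevant case where the factors $H$ are connected as well as locally connected, the descent then yielding local connectedness of $G$.

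The main obstacle is the clopen-set correspondence underlying this descent: that $C\mapsto\overline{C}$ (closure in $H^\sigma$) is a bijection from the clopen subsets of $G$ onto those of $H^\sigma$, equivalently that the $H^\sigma$-closures of two complementary clopen pieces of $G$ are disjoint. This is where $G_\delta$-density is essential, forcing the continuous $\{0,1\}$-valued function of a clopen partition of $G$ to extend continuously over $H^\sigma$, so that clopen partitions of $G$ and of $H^\sigma$ correspond. Verifying this extension property — that $G$ and $H^\sigma$ admit the same continuous maps into compact metric spaces — is the one step demanding genuine care, with local connectedness handled by localizing the same argument inside connected open sets; the remaining steps are formal.
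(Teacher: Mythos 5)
Your handling of the algebraic part, density, and the transfer of selective sequential pseudocompactness is exactly the paper's: $X$ is a $\V$-base of $G=\langle X\rangle$, so $G\cong F_\tau(\V)$, and Proposition \ref{dense:ssp} passes the property from $X$ to $G$. Where you diverge is in (i)--(iii): the paper disposes of all three in one line, observing that $H^\sigma$ is the completion of $G$ and citing \cite[Fact 2.10]{DiS}, whereas you try to re-derive the descent. For (iii) your argument (a base of clopen sets is productive, and $\mathrm{ind}$ is monotone under subspaces) is correct and in fact more elementary than the citation, needing no pseudocompactness at all. For (i) your route is sound in substance, but you misattribute the mechanism: what makes the $\{0,1\}$-valued function on $G$ extend over $H^\sigma$ is \emph{not} $G_\delta$-density as a subspace property, but the Comfort--Ross theorem that a pseudocompact \emph{group} is $C$-embedded in its completion ($\beta G=H^\sigma$). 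A $G_\delta$-dense subspace of a compact connected, even locally connected, space can be disconnected --- e.g.\ the complement of the wedge point in a wedge of two closed long lines --- so the extension property you flag as ``the one step demanding genuine care'' cannot be extracted from $G_\delta$-density alone; it is exactly the content of Comfort--Ross, which you should cite rather than attempt to verify.

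The genuine gap is in (ii). ``Localizing the same argument inside connected open sets'' does not work: for a connected open $W\subseteq H^\sigma$, the trace $W\cap G$ is no longer a subgroup, is not known to be $C$-embedded (or even $C^*$-embedded) in $W$, and, by the example above, its $G_\delta$-density in $W$ does not force it to be connected. So the key step ``connected open $W$ yields a connected neighbourhood $W\cap G$'' is unsupported, and that step is the whole of (ii). The standard repair is Banaschewski's theorem that $\beta X$ is locally connected if and only if $X$ is pseudocompact and locally connected, applied with $\beta X=\beta G=H^\sigma$; this is what underlies \cite[Fact 2.10]{DiS}, which the paper simply cites. One point in your favour: your restriction to ``the relevant case where the factors $H$ are connected as well as locally connected'' is not a cop-out but a necessary correction. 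As literally stated, (ii) is false: for $H=\Z(2)$ (locally connected but disconnected) the hypotheses of the lemma are satisfiable via Lemma \ref{v-free-group} applied to $\Z(2)^\omega$ and the isomorphism $(\Z(2)^\omega)^\sigma\cong\Z(2)^\sigma$; then $H^\sigma$ is a Cantor cube and $G$ is an infinite, zero-dimensional, non-discrete pseudocompact group, hence not locally connected. Both the paper's appeal to Fact 2.10 and every application of (ii) in the paper implicitly assume $H$ connected and locally connected, exactly as you do.
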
 
\begin{proof}
The isomorphism $\langle X\rangle\cong F_\tau(\V)$ is clear.
Since $X\subseteq \langle X\rangle$ and $X$ is dense in $H^\sigma$, so is 
$G=\langle X\rangle $.
Since $X$ is dense in $H^\sigma$, it is also dense in $G$.
Since $X$ is \ssp, Proposition \ref{dense:ssp} implies that $G$ is \ssp\ as well. Note that the completion of $G$ coincides with $H^\sigma$,
so the rest of the statements follows from \cite[Fact 2.10]{DiS}.
\end{proof}

\begin{definition}\label{def_variety}\cite[Definition 5.2]{DiS}
A variety $\V$ is \emph{precompact} if there exists a compact zero-dimensional metric group $H\in\V$ with $r_\V(H)\geq \omega .$  
\end{definition}

Non-precompact varieties are not easy to come by, as many of the known varieties are precompact.
Indeed, any variety consisting of Abelian groups,  the variety of all groups, the variety of all nilpotent groups, the variety of all 
polynilpotent groups, the variety of all soluble groups  are known to be precompact \cite{N}.
For every prime number $p>665$, the Burnside variety $\B_p$ consisting of all groups satisfying the identity $x^p=e$
is not precompact \cite{D}.  

\begin{theorem}
\label{precompact:variety:theorem}
Let $\V$ be a precompact variety of groups. 
Then for every infinite cardinal $\sigma$ such that $\sigma^{\omega}=\sigma$, the group $F_\sigma(\V)$ admits a zero-dimensional \ssp\ group topology.
\end{theorem}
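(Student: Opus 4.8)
The plan is to realise the desired topology on $F_\sigma(\V)$ as the subspace topology inherited from a suitable power $H^\sigma$ of a compact zero-dimensional metric group $H\in\V$, and then transport it along an abstract group isomorphism. All the genuine technical work has already been carried out in Section~2, so the theorem will follow simply by assembling the definition of a precompact variety together with Lemmas \ref{indep_seq_pseudo} and \ref{lem:2.5}.

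First, since $\V$ is precompact, Definition \ref{def_variety} furnishes a compact zero-dimensional metric group $H\in\V$ with $r_\V(H)\geq\omega$. This $H$ is precisely what is required to invoke Lemma \ref{indep_seq_pseudo}. Applying that lemma to this $H$ and to the given cardinal $\sigma$ (which satisfies $\sigma^\omega=\sigma$ by hypothesis), I obtain a dense $\V$-independent \ssp\ subset $Y$ of $H^\sigma$ with $|Y|=\sigma$. The accompanying set $C\subseteq\sigma$ and the equality $\langle Y\rangle\cap H^{C}=\{e\}$ produced by the lemma play no role here; they are needed only for the sharper Lemma \ref{v-free-group}.

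Next, I would feed $Y$ into Lemma \ref{lem:2.5} with $\tau=\sigma$ and $X=Y$. Since $H$ is compact and $Y$ is a dense $\V$-independent \ssp\ subset of $H^\sigma$ of cardinality $\sigma$, that lemma yields that $G=\langle Y\rangle$ is a dense \ssp\ subgroup of $H^\sigma$ with $G\cong F_\sigma(\V)$. Because $H$ is zero-dimensional, part (iii) of the lemma guarantees that $G$ is zero-dimensional as well. Hence $G$, equipped with the subspace topology inherited from $H^\sigma$, is a zero-dimensional \ssp\ topological group.

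Finally, transporting this group topology from $G$ to $F_\sigma(\V)$ along the abstract isomorphism $F_\sigma(\V)\cong G$ produces a zero-dimensional \ssp\ group topology on $F_\sigma(\V)$, as required. I do not expect any real obstacle at the level of this statement: the entire difficulty is concentrated in the construction underlying Lemma \ref{indep_seq_pseudo}, and the present theorem is essentially an immediate corollary once the hypotheses of the two lemmas are correctly matched.
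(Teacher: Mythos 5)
Your proposal is correct and coincides with the paper's own proof: both obtain $H$ from Definition \ref{def_variety}, extract the dense $\V$-independent \ssp\ set $Y$ of cardinality $\sigma$ from Lemma \ref{indep_seq_pseudo}, and apply Lemma \ref{lem:2.5} with $X=Y$, $\tau=\sigma$ to transfer the zero-dimensional \ssp\ subgroup topology of $\langle Y\rangle\subseteq H^\sigma$ to $F_\sigma(\V)$. Your side remark that the set $C$ and the equality $\langle Y\rangle\cap H^{C}=\{e\}$ are not needed at this stage is also accurate.
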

\begin{proof}
Since $\V$ is a precompact variety, by Definition \ref{def_variety}, there is a compact zero-dimensional metric 
group $H\in\V$ with $r_\V(H)\geq\omega.$ 
By Lemma 
\ref{indep_seq_pseudo}, 
there exists a dense $\mathcal V$-independent \ssp\ subset $Y$ of $H^{\sigma}$ 
such that $|Y|=\sigma.$
Applying Lemma \ref{lem:2.5} (with $X=Y$ and $\tau=\sigma$), we conclude that 
$\langle Y\rangle\cong F_\sigma(\V)$
is a zero-dimensional \ssp\ (dense) subgroup of $H^\sigma$.
The subgroup topology that $\langle Y\rangle\cong F_\sigma(\V)$
inherits from $H^\sigma$ 
is the required group topology on $F_\sigma(\V)$.
\end{proof}

\begin{corollary}
\label{four:conditions:for:a:precompact:variety}
Let $\V$ be a precompact variety of groups. Under GCH, 
the following conditions are equivalent for every infinite cardinal $\sigma$:
\begin{itemize}
\item[(i)] the group $F_\sigma(\mathcal{V})$ admits a pseudocompact group topology;
\item[(ii)] the group $F_\sigma(\mathcal{V})$ admits a
strongly pseudocompact group topology;
\item[(iii)] the group $F_\sigma(\mathcal{V})$ admits a
\ssp\ group topology;
\item[(iv)] the group $F_\sigma(\mathcal{V})$ admits a
zero-dimensional \ssp\ group topology.
\end{itemize}
\end{corollary}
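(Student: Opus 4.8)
The plan is to prove the four conditions equivalent by closing a cycle of implications in which three links are free and all the work is concentrated in one. First I would dispose of the chain (iv)$\Rightarrow$(iii)$\Rightarrow$(ii)$\Rightarrow$(i): each of these is witnessed by one and the same group topology, since a zero-dimensional \ssp\ topology is in particular \ssp, an \ssp\ topology is strongly pseudocompact, and a strongly pseudocompact topology is pseudocompact, exactly by the implications in \eqref{two:implications}. Nothing beyond \eqref{two:implications} is needed for this part.

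The substance of the corollary lies in (i)$\Rightarrow$(iv), and here the decisive external input is Theorem \ref{precompact:variety:theorem}, which already manufactures a zero-dimensional \ssp\ group topology on $F_\sigma(\V)$ from the single numerical hypothesis $\sigma^\omega=\sigma$. Consequently the whole task reduces to extracting $\sigma^\omega=\sigma$ from the mere assumption that $F_\sigma(\V)$ carries some pseudocompact group topology, and this is the point at which GCH is invoked. I would argue as follows. Since $\V$ is precompact it contains a group $H$ with $r_\V(H)\ge\omega$, so $\V$ is non-trivial and $F_\sigma(\V)=\langle X\rangle$ for a $\V$-base $X$ with $|X|=\sigma$. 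The inclusion $X\subseteq F_\sigma(\V)$ gives $|F_\sigma(\V)|\ge\sigma$, while counting finite group words over the alphabet $X$ yields $|F_\sigma(\V)|\le\sigma$; hence $|F_\sigma(\V)|=\sigma$. A pseudocompact group topology on $F_\sigma(\V)$ then exhibits a pseudocompact group of cardinality $\sigma$, so $\sigma$ is admissible in the sense of Definition \ref{def:admissible:cardinal}. Applying Lemma \ref{3.4DiS}(iii), which under GCH gives $\sigma=\sigma^\omega$ for every admissible cardinal $\sigma$, I obtain $\sigma^\omega=\sigma$, and feeding this into Theorem \ref{precompact:variety:theorem} produces the zero-dimensional \ssp\ group topology demanded by (iv).

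The hard part will be conceptual rather than computational: recognizing that GCH, and not merely SCH, is what drives the argument. Under SCH one would only learn that $\sigma$ is \sa, i.e. that there is some auxiliary cardinal $\rho$ with $\rho^\omega=\rho\le\sigma\le 2^{\rho}$, whereas Theorem \ref{precompact:variety:theorem} requires the exact equality $\sigma^\omega=\sigma$ for $\sigma$ itself; it is precisely Lemma \ref{3.4DiS}(iii), available under GCH, that supplies this. The remaining delicate point is the cardinality bookkeeping $|F_\sigma(\V)|=\sigma$: one must use that precompactness of $\V$ forces a genuinely infinite $\V$-base, so that the lower bound $|F_\sigma(\V)|\ge\sigma$ survives, before the routine word-count provides the matching upper bound.
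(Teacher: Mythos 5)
Your proposal is correct and takes essentially the same route as the paper: the chain (iv)$\Rightarrow$(iii)$\Rightarrow$(ii)$\Rightarrow$(i) via \eqref{two:implications}, and (i)$\Rightarrow$(iv) by observing that $|F_\sigma(\V)|=\sigma$ is admissible, invoking Lemma \ref{3.4DiS}~(iii) under GCH to obtain $\sigma^\omega=\sigma$, and then applying Theorem \ref{precompact:variety:theorem}. Your explicit word-counting argument for $|F_\sigma(\V)|=\sigma$ simply fills in a detail that the paper leaves implicit.
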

\begin{proof}
The implication (iv)$\Rightarrow$(iii) is trivial, while 
the
implications (iii)$\Rightarrow$(ii)$\Rightarrow$(i) follow from equation
\eqref{two:implications}.

(i)$\Rightarrow$(iv)
Let $\sigma$ be an arbitrary infinite cardinal. Assume (i); that  is,
$F_\sigma(\mathcal{V})$ admits a pseudocompact group topology.
Since $\sigma$ is infinite, $|F_\sigma(\mathcal{V})|=\sigma$ is an admissible cardinal by Definition \ref{def:admissible:cardinal}.
Since GCH holds, Lemma \ref{3.4DiS}~(iii) implies that $\sigma^\omega=\sigma$.
Applying Theorem 
\ref{precompact:variety:theorem}, we conclude that $F_\sigma(\mathcal{V})$ admits a zero-dimensional \ssp\ group topology.
\end{proof}

We use $\mathcal{G}$ 
for denoting the variety of all groups.

\begin{theorem}
\label{G:theorem}
For every infinite cardinal $\sigma$ such that $\sigma^{\omega}=\sigma$, the group $F_\sigma(\mathcal{G})$ admits a 
connected, locally connected,  \ssp\ group topology.
\end{theorem}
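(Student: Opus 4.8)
The plan is to imitate the proof of Theorem \ref{precompact:variety:theorem} almost verbatim, but to replace the zero-dimensional compact metric witness by a \emph{connected, locally connected} one, and then to invoke items (i) and (ii) of Lemma \ref{lem:2.5} in place of item (iii). Since $\mathcal{G}$ is the variety of \emph{all} groups, the membership requirement $H\in\mathcal{G}$ is vacuous, so the only genuine constraint on the witnessing group $H$ is that it be a compact, connected, locally connected, metric group with $r_{\mathcal{G}}(H)\geq\omega$. Once such an $H$ is in hand, Lemma \ref{indep_seq_pseudo} (applied with $\V=\mathcal{G}$ and the given $\sigma$, using the hypothesis $\sigma^{\omega}=\sigma$) yields a dense $\mathcal{G}$-independent \ssp\ subset $Y$ of $H^{\sigma}$ with $|Y|=\sigma$, and Lemma \ref{lem:2.5} (with $X=Y$ and $\tau=\sigma$) then shows that $G=\langle Y\rangle\cong F_\sigma(\mathcal{G})$ is a dense \ssp\ subgroup of $H^{\sigma}$ whose inherited group topology is connected by (i) and locally connected by (ii). The subgroup topology carried by $G\cong F_\sigma(\mathcal{G})$ is then the desired one.

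Thus the entire argument reduces to exhibiting a suitable $H$. I would take $H=SU(2)$, the group of $2\times 2$ complex unitary matrices of determinant $1$ (the group $SO(3)$, or any compact connected non-abelian Lie group, would serve equally well). Being a compact Lie group, $H$ is compact and metrizable; being underlain by the $3$-sphere $S^{3}$, it is connected; and being a smooth manifold, it is locally Euclidean and hence locally connected. All the topological requirements on $H$ are therefore immediate.

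The heart of the matter — and the step I expect to be the main obstacle — is verifying that $r_{\mathcal{G}}(H)\geq\omega$, that is, that $H$ contains a free subgroup of countably infinite rank whose free generators form a $\mathcal{G}$-independent subset. (Recall that, for the variety $\mathcal{G}$ of all groups, condition (a) of Definition \ref{def:V-independent} is automatic and condition (b) says precisely that $\langle X\rangle$ is freely generated by $X$; so a $\mathcal{G}$-independent set is exactly a set of free generators of a free subgroup.) Here I would invoke the classical fact underlying the Hausdorff--Banach--Tarski paradox, namely that $SO(3)$, and hence $SU(2)$, contains a non-abelian free subgroup of rank $2$. Since a free group of rank $2$ already contains free subgroups of every countable rank — for instance $\{a^{n}ba^{-n}:n\in\N\}$ freely generates a copy of the free group of rank $\omega$ — the group $H$ contains an embedded copy of the free group on countably many generators, and a set of free generators of this copy is a $\mathcal{G}$-independent subset of $H$ of cardinality $\omega$. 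Hence $r_{\mathcal{G}}(H)\geq\omega$, and the three-step argument above goes through, producing the required connected, locally connected, \ssp\ group topology on $F_\sigma(\mathcal{G})$.
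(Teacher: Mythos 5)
Your proposal is correct and takes essentially the same route as the paper: the paper's proof likewise feeds a compact, connected, locally connected metric witness into Lemma \ref{indep_seq_pseudo} and then Lemma \ref{lem:2.5}(i),(ii), its only difference being the choice $H=SO(3,\R)$ with the bound $r_{\mathcal{G}}(H)\geq\omega$ cited from Balcerzyk and Mycielski rather than verified by hand. Your substitution of $SU(2)$ together with the self-contained verification of $r_{\mathcal{G}}(H)\geq\omega$ (a rank-$2$ free subgroup plus the free family $\{a^{n}ba^{-n}:n\in\N\}$) is a sound replacement for that citation.
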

\begin{proof}
The group $H=SO(3,\R)$
of all rotations around the origin of the three-dimensional Euclidean space $\R^3$ under the operation of composition
is a 
compact metric group
satisfying $r_\mathcal{G}(\mathrm{SO}(3,\R))\geq\omega;$
see \cite{BM}.
Moreover, $H$
(trivially) belongs to the variety $\mathcal{G}$.
By Lemma 
\ref{indep_seq_pseudo}, 
there exists a dense $\mathcal G$-independent \ssp\ subset $Y$ of $H^{\sigma}$ 
such that $|Y|=\sigma.$
Since $H$ is both connected and locally connected,
applying Lemma \ref{lem:2.5} (with $X=Y$ and $\V=\mathcal{G}$), we conclude that 
$\langle Y\rangle\cong F_\sigma(\mathcal{G})$
is a connected, locally connected, \ssp\ (dense) subgroup of $H^\sigma$.
The subgroup topology that $\langle Y\rangle\cong F_\sigma(\mathcal{G})$
inherits from $H^\sigma$ 
is the required group topology on $F_\sigma(\mathcal{G})$.
\end{proof}

\begin{corollary}
\label{cor:3.9}
Under GCH, the following conditions are equivalent for every infinite cardinal $\sigma$:
\begin{itemize}
\item[(i)] 
$F_\sigma(\mathcal{G})$ admits a pseudocompact group topology;
\item[(ii)] 
$F_\sigma(\mathcal{G})$ admits a
strongly pseudocompact group topology;
\item[(iii)] 
$F_\sigma(\mathcal{G})$ admits a
\ssp\ group topology;
\item[(iv)] 
$F_\sigma(\mathcal{G})$ admits a
zero-dimensional \ssp\ group topology;
\item[(v)] 
$F_\sigma(\mathcal{G})$ admits a
connected, locally connected,  \ssp\ group topology.
\end{itemize}
\end{corollary}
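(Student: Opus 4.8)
The plan is to verify a cycle of implications connecting all five conditions, descending from the stronger topologies to bare pseudocompactness via the trivial direction of \eqref{two:implications}, and ascending back from (i) by means of the two existence theorems already established for the variety $\mathcal{G}$.

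First I would record the easy implications. Since conditions (iv) and (v) simply add a dimension-theoretic requirement to (iii), the implications (iv)$\Rightarrow$(iii) and (v)$\Rightarrow$(iii) are immediate. The chain (iii)$\Rightarrow$(ii)$\Rightarrow$(i) is precisely the content of \eqref{two:implications}. Consequently it suffices to prove that (i) implies both (iv) and (v); the equivalence of all five statements then follows by pure logic.

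Next I would extract from (i) the arithmetic fact that drives the constructions. Assuming $F_\sigma(\mathcal{G})$ admits a pseudocompact group topology, its cardinality $|F_\sigma(\mathcal{G})|=\sigma$ is admissible in the sense of Definition \ref{def:admissible:cardinal}. Since we are working under GCH, Lemma \ref{3.4DiS}(iii) then yields $\sigma^\omega=\sigma$, which is exactly the hypothesis required by both Theorem \ref{precompact:variety:theorem} and Theorem \ref{G:theorem}.

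Finally I would invoke those two theorems. The variety $\mathcal{G}$ of all groups is precompact (it is among the varieties listed as precompact in the discussion following Definition \ref{def_variety}), so Theorem \ref{precompact:variety:theorem} supplies a zero-dimensional \ssp\ group topology on $F_\sigma(\mathcal{G})$, establishing (iv); and Theorem \ref{G:theorem} directly supplies a connected, locally connected \ssp\ group topology on the same group, establishing (v). I anticipate no genuine obstacle at this stage: all the substantive effort---producing a dense \ssp\ $\mathcal{G}$-independent subset inside a power of $\mathrm{SO}(3,\R)$ via Lemma \ref{indep_seq_pseudo}---has already been carried out, and the corollary merely assembles these ingredients, in close analogy with the proof of Corollary \ref{four:conditions:for:a:precompact:variety}.
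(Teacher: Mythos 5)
Your proposal is correct and follows essentially the same route as the paper: the paper cites Corollary \ref{four:conditions:for:a:precompact:variety} for the equivalence of (i)--(iv) and Theorem \ref{G:theorem} for (i)$\Rightarrow$(v), whereas you simply inline the proof of that corollary (admissibility of $\sigma$, then $\sigma^\omega=\sigma$ under GCH via Lemma \ref{3.4DiS}(iii), then Theorem \ref{precompact:variety:theorem}), which is the identical argument. No gaps.
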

\begin{proof}
By \cite[Lemma 5.3]{DiS}, the variety $\mathcal{G}$ is precompact. 
Hence, the equivalence of items (i)--(iv) follows from 
Corollary \ref{four:conditions:for:a:precompact:variety}.
The implication (i)$\Rightarrow$(v) was proved in Theorem
\ref{G:theorem}. 
The implication (v)$\Rightarrow$(iii) is trivial.
\end{proof}

We use 
$\mathcal{A}$ for denoting the variety of all Abelian groups.

\begin{theorem}\label{free-ssp}
Let $\V$ be a 
variety satisfying $\V\subseteq   \mathcal{A}$.
Then for every \sa\ cardinal $\tau\ge\omega$,
the $\V$-free group
$F_\tau(\V)$ admits a zero-dimensional \ssp\ group topology.
\end{theorem}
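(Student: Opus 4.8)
The plan is to follow the template of the proofs of Theorems \ref{precompact:variety:theorem} and \ref{G:theorem}, but with Lemma \ref{indep_seq_pseudo} (which only delivers a $\V$-independent \ssp\ set of size $\sigma$) replaced by Lemma \ref{v-free-group} (which delivers one of any prescribed size $\tau$ in the range $\sigma\leq\tau\leq 2^{\sigma}$). First I would unwind the hypothesis: since $\tau\geq\omega$ is \sa, Definition \ref{def_sa} provides an infinite cardinal $\sigma$ with $\sigma^{\omega}=\sigma\leq\tau\leq 2^{\sigma}$, which is precisely the cardinal arithmetic required to invoke Lemma \ref{v-free-group}.

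Next I need a compact metric test group $H\in\V$ that is zero-dimensional and satisfies $r_\V(H)\geq\omega$. Since $\V\subseteq\mathcal{A}$ is a variety of Abelian groups, it is precompact, as recorded in the discussion following Definition \ref{def_variety} (see \cite{N}); thus such an $H$ exists by Definition \ref{def_variety}. Concretely, every nontrivial variety of Abelian groups is either $\mathcal{A}$ itself or the variety of groups of exponent dividing some fixed $n\in\N^+$; one may then take $H=\Z_p$ for an arbitrary prime $p$ in the former case and $H=(\Z/n\Z)^{\omega}$ in the latter. In each case $H$ is a compact, metrizable, zero-dimensional member of $\V$: in the former case any countably infinite $\Z$-linearly independent subset of the torsion-free group $\Z_p$ witnesses $r_\V(H)\geq\omega$, while in the latter the elements of the direct sum $(\Z/n\Z)^{(\omega)}\subseteq H$ supported on a single coordinate do.

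With $\sigma$, $\tau$ and $H$ fixed, I would apply Lemma \ref{v-free-group} to obtain a dense, $\V$-independent, \ssp\ subset $X$ of $H^{\sigma}$ with $|X|=\tau$; the accompanying set $D$ and the condition $\langle X\rangle\cap H^{D}=\{0\}$ are not needed for this statement. Then Lemma \ref{lem:2.5}, applied to this $X$, shows that $G=\langle X\rangle\cong F_\tau(\V)$ is a dense \ssp\ $\V$-free subgroup of $H^{\sigma}$, and part (iii) of that lemma guarantees that $G$ is zero-dimensional because $H$ is. Consequently the subgroup topology that $\langle X\rangle\cong F_\tau(\V)$ inherits from $H^{\sigma}$ is the required zero-dimensional \ssp\ group topology on $F_\tau(\V)$.

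I do not anticipate a genuine obstacle: the argument is a direct assembly of Lemmas \ref{v-free-group} and \ref{lem:2.5}, together with the passage from selective admissibility to the relation $\sigma^{\omega}=\sigma\leq\tau\leq 2^{\sigma}$. The only step requiring a little care is the production of $H$, i.e.\ confirming that an arbitrary variety of Abelian groups is precompact; this is already cited in the paper, and in any event it is covered by the two explicit models above, which exhaust all such varieties. (The degenerate trivial variety, for which $F_\tau(\V)$ is the one-point group, is handled trivially.)
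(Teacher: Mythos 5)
Your proposal is correct and follows essentially the same route as the paper's proof: unwind selective admissibility to get $\sigma^{\omega}=\sigma\le\tau\le 2^{\sigma}$, invoke precompactness of $\V\subseteq\mathcal{A}$ (via \cite{N}) to obtain a zero-dimensional compact metric $H\in\V$ with $r_\V(H)\ge\omega$, apply Lemma \ref{v-free-group} to get a dense $\V$-independent \ssp\ subset $X$ of $H^{\sigma}$ with $|X|=\tau$, and finish with Lemma \ref{lem:2.5}. Your additional concrete description of $H$ is harmless but redundant (and slightly imprecise: in the bounded case one must take one generator of order exactly $n$ per coordinate, not all singly-supported elements), since the cited precompactness already supplies $H$.
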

\begin{proof}
Since 
$\V\subseteq   \mathcal{A}$, the variety
$\V$ is precompact \cite{N}. By Definition \ref{def_variety}, there exists a zero-dimensional compact metric group $H\in\V$ with 
$r_\V(H)\geq\omega$. Let $\tau\ge\omega$ be a \sa\ cardinal.
By Definition 
\ref{def_sa}, there exists an infinite cardinal $\sigma$
satisfying $\sigma^\omega=\sigma\le\tau \le 2^\sigma$.
By Lemma \ref{v-free-group},
there exists a dense $\mathcal V$-independent \ssp\  subset $X$ of $H^{\sigma}$ such that $|X|=\tau$.
By
Lemma \ref{lem:2.5}, 
$\langle X\rangle\cong F_\tau(\V)$
is a zero-dimensional \ssp\ (dense) subgroup of $H^\sigma$.
The subgroup topology that $\langle X\rangle\cong F_\tau(\V)$
inherits from $H^\sigma$ 
is the required group topology on $F_\tau(\V)$.
\end{proof}

\begin{corollary}
\label{corollary:for:abelian:varieties}
Let $\V$ be a 
variety satisfying $\V\subseteq   \mathcal{A}$.
Under SCH, the following conditions
are equivalent for every infinite cardinal $\tau$:
\begin{enumerate}
\item[(i)] 
$F_\tau(\V)$ admits a pseudocompact group topology;
\item[(ii)] 
$F_\tau(\V)$ admits a strongly pseudocompact group topology;
\item[(iii)] 
$F_\tau(\V)$ admits a \ssp \ group topology;
\item[(iv)]
$F_\tau(\V)$ admits a zero-dimensional \ssp\ group topology. 
\end{enumerate} 
\end{corollary}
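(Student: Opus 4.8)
The plan is to follow the same cycle-of-implications strategy used in Corollary \ref{four:conditions:for:a:precompact:variety}. The implications (iv)$\Rightarrow$(iii)$\Rightarrow$(ii)$\Rightarrow$(i) require no real work: the first is trivial, since a zero-dimensional \ssp\ topology is in particular a \ssp\ topology, while the remaining two follow at once from the chain \eqref{two:implications}. So the entire content of the proof is concentrated in the single implication (i)$\Rightarrow$(iv), and that is where I would direct all the effort.

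For (i)$\Rightarrow$(iv), the first thing I would pin down is the cardinality of the free group. Since $\V\subseteq\mathcal{A}$, the group $F_\tau(\V)$ is Abelian, so each of its elements is a finite $\Z$-linear combination of members of a fixed $\V$-base of cardinality $\tau$; counting these combinations gives $|F_\tau(\V)|\le|[\tau]^{<\omega}|\cdot\omega=\tau$ for infinite $\tau$, while the reverse inequality is immediate from the size of the base. Hence $|F_\tau(\V)|=\tau$. Assuming (i), the group $F_\tau(\V)$ then carries a pseudocompact group topology, so by Definition \ref{def:admissible:cardinal} the cardinal $\tau=|F_\tau(\V)|$ is admissible.

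The next step is the one that actually uses the set-theoretic hypothesis. Since $\tau$ is an infinite admissible cardinal, Lemma \ref{3.4DiS}~(ii) tells us that, under SCH, $\tau$ is \sa. With this in hand I would invoke Theorem \ref{free-ssp}: because $\V\subseteq\mathcal{A}$ and $\tau\ge\omega$ is a \sa\ cardinal, the group $F_\tau(\V)$ admits a zero-dimensional \ssp\ group topology, which is precisely (iv). This closes the cycle and establishes the equivalence.

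I do not anticipate a genuine obstacle here, since the hard analytic and combinatorial content has already been isolated in Lemma \ref{v-free-group} and packaged in Theorem \ref{free-ssp}; at the level of the corollary the argument is essentially bookkeeping, combining those results with the set-theoretic input of Lemma \ref{3.4DiS}. The one point deserving a little care is the cardinality computation $|F_\tau(\V)|=\tau$, as it is what lets us identify $\tau$ as an admissible cardinal and thereby trigger SCH through Lemma \ref{3.4DiS}~(ii); everything else is a direct substitution into previously proved statements.
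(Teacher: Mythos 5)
Your proof is correct and follows essentially the same route as the paper: the implications (iv)$\Rightarrow$(iii)$\Rightarrow$(ii)$\Rightarrow$(i) are dispatched trivially and via \eqref{two:implications}, and (i)$\Rightarrow$(iv) is obtained by noting $|F_\tau(\V)|=\tau$ is admissible, invoking Lemma \ref{3.4DiS}~(ii) under SCH to get that $\tau$ is \sa, and then applying Theorem \ref{free-ssp}. The only difference is cosmetic: you spell out the cardinality computation $|F_\tau(\V)|=\tau$, which the paper treats as immediate.
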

\begin{proof}
(i)$\Rightarrow$(iv) 
By (i), $F_\tau(\V)$ admits a pseudocompact group topology.
Since $\tau$ is infinite, $|F_\tau(\V)|=\tau$ is an admissible cardinal by Definition \ref{def:admissible:cardinal}.
Since SCH holds, $\tau$ is \sa\ by Lemma \ref{3.4DiS}~(ii).
Applying Theorem \ref{free-ssp}, we conclude that 
$F_\tau(\V)$ admits a zero-dimensional \ssp\ group topology.

The implication (iv)$\Rightarrow$(iii) is trivial. The implications (iii)$\Rightarrow$(ii)$\Rightarrow$(i)
follow from equation \eqref{two:implications}.
\end{proof}

\begin{theorem}\label{free-abelian-ssp}
For every \sa\ cardinal $\tau\ge\omega$,
the $\mathcal{A}$-free group
$F_\tau(\mathcal{A})$ 
admits a connected, locally connected, \ssp\ group topology.
\end{theorem}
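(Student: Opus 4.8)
The plan is to follow the same template as Theorems~\ref{G:theorem} and~\ref{free-ssp}, feeding a suitably chosen compact metric group into Lemmas~\ref{v-free-group} and~\ref{lem:2.5}. In Theorem~\ref{free-ssp} the witness group $H$ was taken zero-dimensional in order to produce a zero-dimensional topology; here, to obtain a \emph{connected, locally connected} topology, I would instead choose $H$ to be connected and locally connected. The natural candidate in the Abelian setting is the circle group $\T$, which plays the role that $\mathrm{SO}(3,\R)$ played in Theorem~\ref{G:theorem} for the variety of all groups.

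First I would record the relevant properties of $H=\T$. It is a compact metric Abelian group, hence $H\in\A$, and as a one-dimensional manifold it is both connected and locally connected. The one substantive point to check is that $r_\A(\T)\ge\omega$. For this I would exhibit an infinite $\A$-independent subset of $\T$ directly: choose real numbers $r_0,r_1,\dots$ so that $\{1\}\cup\{r_n:n\in\N\}$ is linearly independent over $\mathbb{Q}$, and set $t_n=r_n+\Z\in\T=\R/\Z$. If $\sum_{n}k_n t_n=0$ in $\T$ with integers $k_n$, almost all zero, then $\sum_n k_n r_n\in\Z$, which forces every $k_n=0$ by the choice of the $r_n$; thus $\{t_n:n\in\N\}$ freely generates a free Abelian subgroup of $\T$, i.e. it is an infinite $\A$-independent set. (Equivalently, $\T$ contains a copy of the free Abelian group of rank $\C$.) This verification of $r_\A(\T)\ge\omega$ is the only genuinely new ingredient, and I expect it to be the main, though modest, obstacle; everything else is an application of the already-established machinery.

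Having fixed $H=\T$, I would then argue exactly as in Theorem~\ref{free-ssp}. Let $\tau\ge\omega$ be \sa; by Definition~\ref{def_sa} fix an infinite cardinal $\sigma$ with $\sigma^\omega=\sigma\le\tau\le 2^\sigma$. Applying Lemma~\ref{v-free-group} with $\V=\A$ yields a dense $\A$-independent \ssp\ subset $X$ of $\T^\sigma$ with $|X|=\tau$. Since $\T$ is connected and locally connected, Lemma~\ref{lem:2.5}(i) and~(ii) give that $G=\langle X\rangle\cong F_\tau(\A)$ is a connected, locally connected, \ssp\ dense subgroup of $\T^\sigma$. The subspace topology that $G$ inherits from $\T^\sigma$ is then the desired connected, locally connected, \ssp\ group topology on $F_\tau(\A)$.
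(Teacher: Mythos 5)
Your proposal is correct and follows essentially the same route as the paper: the paper's proof also takes $H=\T$, cites $r_\mathcal{A}(\T)=\C\ge\omega$, and applies Lemma~\ref{v-free-group} followed by Lemma~\ref{lem:2.5} to the dense $\mathcal{A}$-independent \ssp\ subset $X$ of $\T^\sigma$. The only difference is that you verify $r_\mathcal{A}(\T)\ge\omega$ explicitly via a $\mathbb{Q}$-linear independence argument (which is correct, since $\mathcal{A}$-independence coincides with $\Z$-linear independence for Abelian groups), whereas the paper simply quotes this as a known fact.
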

\begin{proof}
Let $\tau\ge\omega$ be a \sa\ cardinal.
By Definition 
\ref{def_sa}, there exists an infinite cardinal $\sigma$
satisfying $\sigma^\omega=\sigma\le\tau \le 2^\sigma$.
Since $r_{\mathcal{A}}(\T)=\C\ge\omega$,
by Lemma \ref{v-free-group},
there exists a dense $\mathcal A$-independent \ssp\  subset $X$ of $\T^{\sigma}$ such that $|X|=\tau$.
Since $\T$ is both connected and locally connected, by
Lemma \ref{lem:2.5}, 
$\langle X\rangle\cong F_\tau(\mathcal{A})$
is a connected, locally connected, \ssp\ (dense) subgroup of $\T^\sigma$.
The subgroup topology that $\langle X\rangle\cong F_\tau(\mathcal{A})$
inherits from $\T^\sigma$ 
is the required group topology on $F_\tau(\mathcal{A})$.
\end{proof}

\begin{corollary}
\label{cor:3.13}
Under SCH, the following conditions are equivalent for every infinite cardinal $\tau$:
\begin{itemize}
\item[(i)] $F_\tau(\mathcal{A})$ admits a pseudocompact group topology;
\item[(ii)] $F_\tau(\mathcal{A})$ admits a
strongly pseudocompact group topology;
\item[(iii)] $F_\tau(\mathcal{A})$ admits a
\ssp\ group topology;
\item[(iv)] $F_\tau(\mathcal{A})$ admits a
zero-dimensional \ssp\ group topology;
\item[(v)] $F_\tau(\mathcal{A})$ admits a
connected, locally connected,  \ssp\ group topology.
\end{itemize}
\end{corollary}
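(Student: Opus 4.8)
The plan is to deduce the equivalence of (i)--(iv) directly from the already-established Corollary \ref{corollary:for:abelian:varieties}, and then to weave condition (v) into the cycle using Theorem \ref{free-abelian-ssp}.

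First, I would observe that the variety $\mathcal{A}$ of all Abelian groups trivially satisfies $\mathcal{A}\subseteq\mathcal{A}$, so Corollary \ref{corollary:for:abelian:varieties} applies with $\V=\mathcal{A}$. Under SCH this immediately yields the equivalence of the conditions (i), (ii), (iii) and (iv). It therefore remains only to insert (v) into this equivalence, which I would accomplish by establishing the two implications (v)$\Rightarrow$(iii) and (i)$\Rightarrow$(v). The implication (v)$\Rightarrow$(iii) is trivial, since a connected, locally connected, \ssp\ group topology is in particular a \ssp\ group topology.

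For the implication (i)$\Rightarrow$(v), I would follow verbatim the argument used for (i)$\Rightarrow$(iv) in Corollary \ref{corollary:for:abelian:varieties}. Assume (i), so that $F_\tau(\mathcal{A})$ admits a pseudocompact group topology. Since $\tau$ is infinite, $|F_\tau(\mathcal{A})|=\tau$, and hence $\tau$ is an admissible cardinal by Definition \ref{def:admissible:cardinal}. Because SCH holds, Lemma \ref{3.4DiS}~(ii) guarantees that $\tau$ is \sa. Applying Theorem \ref{free-abelian-ssp} to the \sa\ cardinal $\tau\ge\omega$, we conclude that $F_\tau(\mathcal{A})$ admits a connected, locally connected, \ssp\ group topology, which is precisely (v).

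I do not expect any genuine obstacle here: all of the substantive analytic and set-theoretic work has been packaged into Theorem \ref{free-abelian-ssp} (whose proof rests on the construction in Lemma \ref{v-free-group} together with the connectedness and local connectedness of $\T$) and into Lemma \ref{3.4DiS}~(ii). The only point requiring a moment's care is the verification that $|F_\tau(\mathcal{A})|=\tau$ for infinite $\tau$, which is what lets us transfer admissibility of the group to admissibility of the cardinal $\tau$; this holds because $F_\tau(\mathcal{A})$ is the free Abelian group on $\tau$ generators and so has cardinality $\tau$.
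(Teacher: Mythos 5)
Your proposal is correct and follows essentially the same route as the paper: the equivalence of (i)--(iv) via Corollary \ref{corollary:for:abelian:varieties}, the implication (i)$\Rightarrow$(v) via admissibility of $\tau$, Lemma \ref{3.4DiS}~(ii) under SCH, and Theorem \ref{free-abelian-ssp}, and the trivial implication (v)$\Rightarrow$(iii). The only difference is that you spell out the passage from (i) to selective admissibility of $\tau$, which the paper leaves implicit by citing Theorem \ref{free-abelian-ssp} directly.
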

\begin{proof}
The equivalence of items (i)--(iv) follows from 
Corollary \ref{corollary:for:abelian:varieties}.
The implication (i)$\Rightarrow$(v) was proved in Theorem
\ref{free-abelian-ssp}. 
The implication (v)$\Rightarrow$(iii) is trivial.
\end{proof}

\section{The algebraic structure of pseudocompact torsion Abelian groups}

The main result in this section is Theorem \ref{pseudocompact:torsion:group:are:sums:of:nice:groups}
which describes the algebraic structure of pseudocompact torsion Abelian groups. 
One can deduce this theorem from \cite[Theorem 6.2]{DiS} but we decided to give a separate proof of it.
Another description of the algebraic structure of
pseudocompact torsion Abelian groups can be found in 
\cite[Theorem 3.19]{CR} and \cite[Theorem 6.2]{DiS}.

\begin{definition}
\label{def:good:sequence}
For $m,n\in\N^+$ with $m\le n$,
a finite sequence $(\tau_m,\tau_{m+1},\dots,\tau_n)$ of cardinals will be called:
\begin{itemize}
\item[(i)]
 {\em good\/} provided that, for every integer $k$ such that $m\le k\le n$, the cardinal
$\max_{k\le i\le n} \tau_i$ 
 is either finite or admissible;
\item[(ii)] {\em very good\/}
if $\tau_n$ is either finite or admissible and 
$\tau_i< \tau_n$ for every $i\in\N^+$ with $m\le i< n$.
\end{itemize}
\end{definition}

\begin{remark}
\label{good:remark}
Assume that  $m,l,n\in\N^+$ and $m\le l\le n$. If $(\tau_m,\tau_{m+1},\dots,\tau_n)$ is a good sequence of cardinals,
then the sequence
$(\tau_{l},\tau_{l+1},\dots,\tau_n)$
is good as well.
\end{remark}

The next lemma shows that each good sequence of cardinals can be partitioned into finitely many adjacent very good subsequences.

\begin{lemma}
\label{partition:lemma}
Assume that $m,n\in\N^+$, $m\le n$ and $(\tau_m,\tau_{m+1},\dots,\tau_n)$ is a good sequence of cardinals.
Then there exists a strictly increasing sequence 
$(s_{0}, s_{1},\dots,s_k)$ of integers such that 
$s_0= m-1$, $s_k=n$ and the sequence
$(\tau_{s_i+1},\tau_{s_i+2},\dots, \tau_{s_{i+1}})$
is very good for every $i=0,\dots,k-1$.
\end{lemma}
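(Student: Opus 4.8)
The plan is to induct on the length $n-m+1$ of the good sequence $(\tau_m,\dots,\tau_n)$, peeling off one very good block from the front at each stage. The base case $m=n$ is immediate: the single-term sequence $(\tau_m)$ is automatically very good, since $\tau_m=\max_{m\le i\le m}\tau_i$ is finite or admissible by goodness (the case $k=m$ in Definition~\ref{def:good:sequence}(i)), while the domination requirement in Definition~\ref{def:good:sequence}(ii) is vacuous. One then takes $s_0=m-1$, $s_1=n$ and $k=1$.

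For the inductive step I would isolate the first block by looking at the global maximum of the sequence. Put $M=\max_{m\le i\le n}\tau_i$, which is finite or admissible by goodness (again the case $k=m$), and let $s_1$ be the \emph{smallest} index in $\{m,\dots,n\}$ with $\tau_{s_1}=M$. By minimality of $s_1$ we have $\tau_i<M=\tau_{s_1}$ for every $i$ with $m\le i<s_1$; since $\tau_{s_1}=M$ is finite or admissible, this is exactly what Definition~\ref{def:good:sequence}(ii) demands, so the prefix $(\tau_m,\dots,\tau_{s_1})$ is very good.

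If $s_1=n$ we are finished with $k=1$. Otherwise $m\le s_1<n$, and by Remark~\ref{good:remark} (applied with $l=s_1+1$) the suffix $(\tau_{s_1+1},\dots,\tau_n)$ is again a good sequence, now of strictly smaller length. The induction hypothesis then provides a strictly increasing sequence $(r_0,r_1,\dots,r_p)$ of integers with $r_0=s_1$, $r_p=n$, and $(\tau_{r_i+1},\dots,\tau_{r_{i+1}})$ very good for each $i=0,\dots,p-1$. Prepending $s_0=m-1$ yields the required partition $(m-1,s_1,r_1,\dots,r_p)$ of the original sequence into adjacent very good blocks, the first of which is $(\tau_m,\dots,\tau_{s_1})$.

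The crucial point on which the whole scheme rests is that each leftover piece must itself remain good so that the recursion can continue; but this is precisely the content of Remark~\ref{good:remark}, so no genuine obstacle arises once that remark is in hand. The only detail deserving care is the choice of $s_1$ as the \emph{first} index attaining the maximum $M$, rather than an arbitrary one: this is exactly what forces the strict inequalities $\tau_i<\tau_{s_1}$ required for the first block to be very good, and an arbitrary maximizing index would generally fail this.
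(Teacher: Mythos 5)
Your proof is correct and follows essentially the same route as the paper's own argument: induction on the length of the sequence, peeling off the prefix ending at the \emph{first} index attaining the global maximum (which is very good by goodness and minimality), and invoking Remark~\ref{good:remark} to see that the remaining suffix is good so the induction hypothesis applies. The paper's proof differs only in bookkeeping (it calls your $s_1$ the index $l=\min K$ and splits into the cases $l=n$ and $l<n$), so there is nothing to add.
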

\begin{proof}
We shall prove this lemma by induction on $n-m$.

\smallskip
{\em Basis of induction\/}. Suppose that
the sequence $(\tau_n)$ is good. 
By item (i) of Definition \ref{def:good:sequence},
$\tau_n$ is either finite or admissible.
It follows from item (ii) of Definition \ref{def:good:sequence} that
the sequence $(\tau_n)$ is very good. 

\smallskip
{\em Inductive step\/}. Suppose that $j\in\N^+$ and our lemma has been already proved for all good sequences $(\tau_m,\tau_{m+1},\dots,\tau_n)$ such that $n-m<j$.
Fix a good sequence $(\tau_m,\tau_{m+1},\dots,\tau_n)$ with
$n-m=j$.

The set $K=\{k\in\N^+: m\le k\le n, \tau_k=\max_{m\le i\le n} \tau_i\}$ is non-empty, so we can define
$l=\min K$.

We claim that the sequence $(\tau_m,\dots,\tau_l)$ is very good. Since the sequence 
$(\tau_m,\tau_{m+1},\dots,\tau_n)$ is good by our assumption, the cardinal $\max_{m\le i\le n} \tau_i$ is either finite or admissible by Definition
\ref{def:good:sequence}~(i). Since $l\in K$, this means that 
$\tau_l=\max_{m\le i\le n} \tau_i$ is either finite or admissible.
Since $l=\min K$, it follows from the definition of $K$ that
$\tau_i<\tau_l$ whenever $m\le i< l$.
Therefore, the sequence $(\tau_m,\dots,\tau_l)$ is very good
by Definition \ref{def:good:sequence}~(ii).
Now we consider two cases.

\smallskip
{\em Case 1.\/} $l=n$. In this case,
the original sequence
$(\tau_m,\tau_{m+1},\dots,\tau_n)$ is very good.

\smallskip
{\em Case 2.\/} $l<n$. 
  By Remark \ref{good:remark},
 the sequence $(\tau_{l+1},\tau_{l+2},\dots,\tau_n)$ is good.
Since $n-(l+1)< n-l\le n-m=j$,
we can apply our inductive assumption to this sequence to get
 a strictly increasing sequence 
$(s_{1}, s_{2},\dots,s_k)$ of integers such that 
$s_1= l$, $s_k=n$ and the sequence
$(\tau_{s_i+1},\tau_{s_i+2},\dots, \tau_{s_{i+1}})$
is very good for every $i=1,\dots,k-1$.
Let $s_0=m-1$.
Now
$(s_{0}, s_{1},\dots,s_k)$ is the desired sequence.
\end{proof}

Good sequences of cardinals typically appear in the situation described by our next lemma. 
Even though this lemma can be derived from 
\cite[Theorem 6.2]{DiS}, 
we include its simple proof
for convenience of the reader. 

\begin{lemma}
\label{good:sequences:from:pseudocompact:groups}
If $G$
is a pseudocompact group
such that $G\cong \bigoplus_{i=m}^n \Z(p^i)^{(\tau_i)}$
for some prime number $p$,
positive integers
$m,n\in\N^+$ and cardinals $\tau_m,\tau_{m+1},\dots,\tau_n$,
then the sequence 
$(\tau_m,\tau_{m+1},\dots,\tau_n)$ is good.
\end{lemma}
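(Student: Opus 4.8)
The plan is to verify condition (i) of Definition \ref{def:good:sequence} directly: for each integer $k$ with $m \le k \le n$, I must show that $\max_{k \le i \le n}\tau_i$ is finite or admissible. By Definition \ref{def:admissible:cardinal} it suffices to exhibit, for each such $k$ for which this maximum is infinite, a pseudocompact group whose cardinality equals $\max_{k\le i\le n}\tau_i$.

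The tool I would use is the standard fact that a continuous homomorphic image of a pseudocompact group is again pseudocompact. Fixing $k$, consider the continuous endomorphism $g \mapsto p^{k-1}g$ of $G$; as recalled in the preamble it maps $G$ onto the subgroup $p^{k-1}G$, which therefore carries the subspace topology and is a pseudocompact group. The heart of the argument is the algebraic identification of this subgroup: since $p^{k-1}\Z(p^i) \cong \Z(p^{i-k+1})$ is nontrivial exactly when $i \ge k$ and is trivial when $i < k$, distributing over the direct sum yields $p^{k-1}G \cong \bigoplus_{i=k}^n \Z(p^{i-k+1})^{(\tau_i)}$, where every summand is a nontrivial finite cyclic $p$-group.

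It then remains to compute the cardinality of $H := p^{k-1}G$. For each $i$ in the range $k \le i \le n$ one has $|\Z(p^{i-k+1})^{(\tau_i)}| = \tau_i$ when $\tau_i$ is infinite and $|\Z(p^{i-k+1})^{(\tau_i)}|$ finite otherwise; since a finite direct sum of cardinals collapses to their maximum as soon as one summand is infinite, it follows that $|H| = \max_{k \le i \le n}\tau_i$ precisely when this maximum is infinite, while $|H|$ is finite when all the $\tau_i$ with $k \le i \le n$ are finite. Thus, when $\max_{k \le i \le n}\tau_i$ is infinite, $H$ is a pseudocompact group witnessing its admissibility; otherwise the maximum is finite. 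In either case the requirement of Definition \ref{def:good:sequence}(i) holds for this $k$, and since $k$ was arbitrary the sequence $(\tau_m,\dots,\tau_n)$ is good.

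The main obstacle is the bookkeeping in this last step: one must resist the temptation to pass to the $p$-socle or to the quotient $H/pH$ (which would produce an elementary abelian group but would raise the question of whether the relevant subgroup is closed, or the quotient Hausdorff and hence legitimately pseudocompact), and instead observe that the subgroup $p^{k-1}G$ itself already has the correct cardinality $\max_{k\le i\le n}\tau_i$ and is automatically pseudocompact as a continuous homomorphic image. The only genuinely delicate point is then the cardinal arithmetic, namely that $\bigoplus_{i=k}^n \Z(p^{i-k+1})^{(\tau_i)}$ has cardinality equal to the maximum of the $\tau_i$ rather than their sum, together with the separate verification of the all-finite case, where ``finite'' is exactly what Definition \ref{def:good:sequence}(i) permits.
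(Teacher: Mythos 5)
Your proposal is correct and follows essentially the same route as the paper's own proof: both use the continuous map $g\mapsto p^{k-1}g$ to identify $p^{k-1}G\cong\bigoplus_{i=k}^n \Z(p^{i-k+1})^{(\tau_i)}$ as a pseudocompact group of cardinality $\max_{k\le i\le n}\tau_i$ (when that maximum is infinite), and then invoke Definition \ref{def:admissible:cardinal}. The separate handling of the all-finite case also matches the paper exactly.
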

\begin{proof}
It suffices to show that,  for every integer $k$ such that 
$m\le k\le n$, the cardinal
$\mu_k=\max_{k\le i\le n} \tau_i$
is either finite or admissible.
Fix an integer $k$ satisfying $m\le k\le n$.
If all cardinals $\tau_k,\tau_{k+1},\dots,\tau_n$ are finite,
then $\mu_k$ is finite. Assume now that at least one of the cardinals $\tau_k,\tau_{k+1},\dots,\tau_n$ is infinite.
Then
the group
\begin{equation}
p^{k-1} G=
\bigoplus_{i=k}^n \Z(p^{i-k+1})^{(\tau_i)}
\end{equation}
is infinite, so 
$|p^{k-1} G|=
\max_{k\le i\le n} \tau_i=\mu_k$.
Observe that $p^{k-1} G$ is a continuous image of the pseudocompact group $G$ 
under the map $g\mapsto p^{k-1} g$, so
$p^{k-1} G$ is pseudocompact as well.
Since 
the cardinality
$\mu_k$ of the pseudocompact group $p^{k-1} G$ is infinite, it must be admissible by Definition \ref{def:admissible:cardinal}.
\end{proof}

\begin{definition}
\label{def:nice:group}
We shall say that a group $G$ is {\em nice\/} provided that 
there exist 
a prime number $p$,
positive integers $m,n\in\N^+$
and a very good sequence  $(\tau_m,\tau_{m+1},\dots,\tau_n)$ of cardinals 
such that $G\cong \bigoplus_{i=m}^n \Z(p^i)^{(\tau_i)}$.
\end{definition}

Let $p$ be a prime number.
Recall that
an Abelian group $G$ is called a {\em $p$-group\/} provided that for every $g\in G$ one 
can find $k\in\N$ such that $p^k g=0$.

\begin{lemma}
\label{decomposition:of:pseudocompact:p-groups}
A non-trivial
pseudocompact bounded torsion Abelian $p$-group 
can be decomposed into a finite direct sum of nice groups.
\end{lemma}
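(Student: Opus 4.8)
The plan is to read off an algebraic decomposition of $G$ from the classical structure theory of bounded Abelian groups, and then to feed it through the two preparatory lemmas on good and very good sequences. First I would invoke Pr\"ufer's theorem (every bounded torsion Abelian group is a direct sum of cyclic groups): applied to our non-trivial bounded torsion Abelian $p$-group $G$, it produces an algebraic isomorphism $G\cong\bigoplus_{i=m}^{n}\Z(p^{i})^{(\tau_{i})}$ for suitable cardinals $\tau_{m},\dots,\tau_{n}$. Taking $n$ to be the largest index with a nonzero summand and $m=1$ (say), non-triviality and boundedness of $G$ guarantee $m,n\in\N^{+}$ with $m\le n<\infty$, so this presentation has exactly the shape required by the hypotheses of Lemma~\ref{good:sequences:from:pseudocompact:groups} and Definition~\ref{def:nice:group}.

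Next I would exploit pseudocompactness. Since $G$ is pseudocompact and admits the decomposition above, Lemma~\ref{good:sequences:from:pseudocompact:groups} immediately yields that the sequence $(\tau_{m},\tau_{m+1},\dots,\tau_{n})$ is good. I would then apply Lemma~\ref{partition:lemma} to this good sequence to obtain a strictly increasing sequence of integers $(s_{0},s_{1},\dots,s_{k})$ with $s_{0}=m-1$, $s_{k}=n$, such that each block $(\tau_{s_{j}+1},\tau_{s_{j}+2},\dots,\tau_{s_{j+1}})$ is very good, for $j=0,\dots,k-1$.

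Finally I would regroup the direct sum along these blocks and conclude. Writing
\[
G\cong\bigoplus_{i=m}^{n}\Z(p^{i})^{(\tau_{i})}
\cong\bigoplus_{j=0}^{k-1}G_{j},
\qquad\text{where}\quad
G_{j}=\bigoplus_{i=s_{j}+1}^{s_{j+1}}\Z(p^{i})^{(\tau_{i})},
\]
I observe that each $G_{j}$ is built from the same prime $p$, the positive integers $s_{j}+1\le s_{j+1}$ in $\N^{+}$, and the very good sequence $(\tau_{s_{j}+1},\dots,\tau_{s_{j+1}})$ supplied by Lemma~\ref{partition:lemma}; hence each $G_{j}$ is \emph{nice} by Definition~\ref{def:nice:group}. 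Therefore $G$ is a finite direct sum of nice groups, as required. Since all the substantive work is packaged into Lemmas~\ref{good:sequences:from:pseudocompact:groups} and~\ref{partition:lemma}, no genuine obstacle remains; the only point requiring care is the index bookkeeping in the regrouping step, namely that the blocks $\{s_{j}+1,\dots,s_{j+1}\}$ tile $\{m,\dots,n\}$ without gaps or overlaps and that each block's top index $s_{j+1}$ correctly plays the role of ``$n$'' in Definition~\ref{def:nice:group} for the summand $G_{j}$.
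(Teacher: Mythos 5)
Your proposal is correct and follows essentially the same route as the paper's own proof: invoke Pr\"ufer's theorem (\cite[Theorem 17.2]{F}) to write $G\cong\bigoplus_{j=1}^{n}\Z(p^{j})^{(\tau_{j})}$, apply Lemma~\ref{good:sequences:from:pseudocompact:groups} to see the sequence of cardinals is good, apply Lemma~\ref{partition:lemma} to partition it into very good blocks, and regroup the direct sum so that each block yields a nice summand by Definition~\ref{def:nice:group}. The index bookkeeping you flag is handled exactly as you describe, so there is nothing to add.
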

\begin{proof}
Let $G$ be a non-trivial pseudocompact bounded torsion $p$-group. Then
\begin{equation}
\label{eq:Prufer:dec}
G\cong \bigoplus_{j=1}^n \Z(p^j)^{(\tau_j)}
\end{equation}
 for a suitable $n\in\N^+$ and cardinals $\tau_1,\tau_2,\dots,\tau_n$; see \cite[Theorem 17.2]{F}. 
By Lemma
 \ref{good:sequences:from:pseudocompact:groups},
 the sequence 
 $(\tau_1,\tau_2,\dots,\tau_n)$ is good. 
Applying Lemma
 \ref{partition:lemma} to the sequence $(\tau_1,\tau_2,\dots,\tau_n)$,
we obtain 
a strictly increasing sequence 
$(s_{0}, s_{1},\dots,s_k)$ of integers such that 
$s_0= 0$, $s_k=n$ and the sequence
$(\tau_{s_i+1},\tau_{s_i+2},\dots, \tau_{s_{i+1}})$
is very good for every $i=0,\dots,k-1$.
By Definition \ref{def:nice:group},
the group 
$$
G_i=
\bigoplus_{j=s_i+1}^{s_{i+1}} \Z(p^j)^{(\tau_j)}
$$
is nice 
for every $i=0,\dots,k-1$.
Clearly,
$G=\bigoplus_{i=0}^{k-1} G_i$.
\end{proof}

\begin{theorem}
\label{pseudocompact:torsion:group:are:sums:of:nice:groups}
Every non-trivial pseudocompact torsion Abelian group is isomorphic to a finite direct sum of nice groups. 
\end{theorem}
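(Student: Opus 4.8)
The plan is to reduce the general torsion case to the bounded $p$-group case already settled in Lemma~\ref{decomposition:of:pseudocompact:p-groups}. Let $G$ be a non-trivial pseudocompact torsion Abelian group. Algebraically $G$ splits as the direct sum $G=\bigoplus_p G_p$ of its $p$-primary components $G_p=\{g\in G:p^k g=0\text{ for some }k\in\N\}$, the sum ranging over all primes $p$. The whole argument hinges on showing that this decomposition is in fact finite and that each summand is bounded; equivalently, that $G$ is \emph{bounded} torsion. Granting this for the moment, only finitely many primes $p_1,\dots,p_r$ contribute, each $G_{p_j}$ is a bounded torsion $p_j$-group, and each $G_{p_j}$ is itself pseudocompact: if $N$ bounds $G$ and $p_j^{a_j}$ is the exact power of $p_j$ dividing $N$, then $M_j=N/p_j^{a_j}$ is coprime to $p_j$, so one can choose $u_j$ with $u_j M_j\equiv 1\pmod{p_j^{a_j}}$, and multiplication by $u_j M_j$ is a continuous endomorphism of $G$ whose image is exactly $G_{p_j}$ (it annihilates every $G_q$ with $q\ne p_j$, since $q^{a_q}\mid M_j$, and acts as the identity on $G_{p_j}$). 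Thus $G_{p_j}$ is a continuous image of the pseudocompact group $G$, hence pseudocompact. Applying Lemma~\ref{decomposition:of:pseudocompact:p-groups} to each non-trivial $G_{p_j}$ writes it as a finite direct sum of nice groups, and concatenating these finitely many finite decompositions exhibits $G$ as a finite direct sum of nice groups.

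The crux, and the step I expect to be the main obstacle, is therefore the boundedness of $G$. Here I would pass to the completion $K=\overline{G}$, a compact Abelian group in which $G$ is $G_\delta$-dense by the Comfort--Ross characterisation of pseudocompact precompact groups. The key observation is that \emph{every compact metrizable quotient $\phi\colon K\to M$ must be a torsion group}: indeed $\phi(G)$, being a continuous image of the pseudocompact group $G$, is pseudocompact, and a pseudocompact subgroup of the metrizable group $M$ is compact, hence closed; as $\phi$ is onto and $G$ is dense, $\phi(G)$ is also dense, so $\phi(G)=M$, and $M=\phi(G)$ is torsion because $G$ is.

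To convert ``all compact metrizable quotients of $K$ are torsion'' into boundedness of $K$, I would dualise. Under Pontryagin duality, compact metrizable quotients of $K$ correspond to countable subgroups of the discrete group $\hat K$, and such a quotient is torsion precisely when the corresponding subgroup of $\hat K$ is bounded, since a compact Abelian group is torsion if and only if it is bounded (a Baire-category argument applied to $M=\bigcup_n M[n]$ makes some $M[n]$ open, hence of finite index). If $\hat K$ were unbounded it would contain either an element of infinite order or elements of unbounded finite order, so a suitable countable subgroup of $\hat K$ would be unbounded, yielding a compact metrizable quotient of $K$ that is not torsion---a contradiction. Hence $\hat K$, and therefore $K$ and its subgroup $G$, is bounded torsion. (Alternatively, the boundedness of pseudocompact torsion Abelian groups may simply be quoted as a known structural fact, a special case of \cite[Theorem 6.2]{DiS}.) With boundedness in hand, the reduction of the first paragraph completes the proof.
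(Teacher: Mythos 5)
Your proposal is correct and follows the same route as the paper's proof: establish that $G$ is bounded torsion, split it into its finitely many non-trivial primary components, check that each component is pseudocompact as a continuous image of $G$ under multiplication by a suitable integer, and then invoke Lemma~\ref{decomposition:of:pseudocompact:p-groups}. The one substantive difference is the treatment of boundedness, which the paper simply cites (\cite[Corollary 3.9]{DiS}) whereas you prove it from scratch; your argument --- pass to the compact completion $K$, note that for any continuous surjection $\phi$ of $K$ onto a compact metrizable $M$ the image $\phi(G)$ is pseudocompact and metrizable, hence compact, hence closed as well as dense, so $M=\phi(G)$ is torsion; then dualise, using that compact metrizable quotients of $K$ correspond to countable subgroups of the discrete dual, that a compact Abelian torsion group is bounded by Baire category, and that an unbounded discrete group contains a countable unbounded subgroup --- is sound, and it buys a self-contained proof at the cost of machinery (Comfort--Ross precompactness and Pontryagin duality) that the paper's citation keeps offstage. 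A minor cosmetic difference: to project onto the $p_j$-component you use the idempotent endomorphism given by multiplication by $u_jM_j$, while the paper multiplies by $n=\prod_{q\neq p_j}q^{k_q}$ and uses $q$-divisibility of the $p_j$-component to conclude $nG=G_{p_j}$; both exhibit $G_{p_j}$ as a continuous image of $G$, so either works.
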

\begin{proof}
Let $G$ be a non-trivial pseudocompact torsion Abelian group.
Then $G$ is bounded torsion \cite[Corollary 3.9]{DiS},
and so there exists a finite set 
$P$ of prime numbers
such that
$G=\bigoplus_{p\in P} G_p$, where $G_p$ is a non-trivial (bounded torsion) $p$-group
for every $p\in P;$ see \cite[Theorem 17.2]{F}. 

Let $p\in P$ be arbitrary. For every $q\in P\setminus \{p\}$,
$G_q$ has order $q^{k_q}$ for some $k_q\in \N^+$; that is,
$q^{k_q} G_q=\{0\}$.
Define $n=\prod_{q\in P\setminus\{p\}} q^{k_q}$.
Since $G_p$ is $q$-divisible for every prime number 
$q$ different from $p$,
it follows that $nG=nG_p=G_p$.
Since $nG$ is an image of $G$ under the continuous map $g\mapsto nG$ and $G$ is pseudocompact, we conclude that
$G_p$ is also pseudocompact.
Now
we can 
apply Lemma \ref{decomposition:of:pseudocompact:p-groups} to $G_p$ to get a decomposition of $G_p$ into a finite direct sum of nice groups.
\end{proof}

\section{Selectively sequentially pseudocompact topologies on torsion Abelian groups}

\begin{lemma}
\label{claim:1}
Assume that 
$p$ is a prime number,
$m,n\in\N^+$, $\tau_m,\tau_{m+1},\dots,\tau_n$ are cardinals such that  $\tau_i\leq\tau_n$ for all $i=m,\ldots,n$, and  
$\tau_n$ is \sa.
Then
$G=\bigoplus_{i=m}^n \Z(p^i)^{(\tau_i)}$ can be equipped with a \ssp\ group topology.
\end{lemma}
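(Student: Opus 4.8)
The plan is to reduce the problem to the already-established results about $\V$-free groups, where $\V$ is a variety of Abelian groups. The key observation is that $G=\bigoplus_{i=m}^n \Z(p^i)^{(\tau_i)}$ becomes, after we extract a suitable subgroup, a free group over the variety of bounded-exponent Abelian groups, which is precompact since it consists of Abelian groups. **First I would** consider the variety $\V=\V(n)$ of Abelian groups of exponent dividing $p^n$; since $\V\subseteq\A$, this variety is precompact and admits a zero-dimensional compact metric group $H\in\V$ with $r_\V(H)\ge\omega$ (indeed $\Z(p^n)^\omega$ works). Because $\tau_n$ is selectively admissible and $\tau_n\ge\omega$, Definition~\ref{def_sa} furnishes an infinite cardinal $\sigma$ with $\sigma^\omega=\sigma\le\tau_n\le 2^\sigma$.

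**The heart of the argument** is to realize $G$ as a dense subgroup of a power of a compact metric group carrying a \ssp\ topology. Applying Lemma~\ref{v-free-group} with $\tau=\tau_n$ produces a dense $\V$-independent \ssp\ subset $X$ of $H^\sigma$ with $|X|=\tau_n$ and a splitting set $D\subseteq\sigma$ satisfying $|D|=|\sigma\setminus D|=\sigma$ and $\langle X\rangle\cap H^D=\{0\}$. By Lemma~\ref{lem:2.5}, the group $\langle X\rangle\cong F_{\tau_n}(\V)$ is a zero-dimensional \ssp\ dense subgroup of $H^\sigma$. The point of the splitting set $D$ is precisely that $H^D$ provides ``room'' into which we can embed the smaller summands $\Z(p^i)^{(\tau_i)}$ for $i<n$: since $H\cong\Z(p^n)^\omega$ contains copies of each $\Z(p^i)$, the group $H^D$ contains a dense copy of $\bigoplus_{i=m}^{n-1}\Z(p^i)^{(\tau_i)}$ (using $|D|=\sigma\ge\tau_i$), and because this sits inside $H^D$ while $\langle X\rangle\cap H^D=\{0\}$, its sum with $\langle X\rangle$ in the ambient Abelian group $H^\sigma$ is direct.

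**The main obstacle** I anticipate is matching the algebraic isomorphism type exactly: I must verify that the subgroup generated by $X$ together with the embedded smaller summands is algebraically isomorphic to $\bigoplus_{i=m}^n\Z(p^i)^{(\tau_i)}$. This requires checking that $F_{\tau_n}(\V)\cong\Z(p^n)^{(\tau_n)}\oplus(\text{lower exponent terms})$ so that, after recombining with the summands placed in $H^D$, the exponents and cardinals align to give precisely $G$. Here the directness provided by $\langle X\rangle\cap H^D=\{0\}$ and the hypothesis $\tau_i\le\tau_n$ (guaranteeing $\sigma$ is large enough to house every $\tau_i$) are exactly what make the bookkeeping work. **Finally**, the \ssp\ property is inherited for free: $\langle X\rangle$ is a dense \ssp\ subgroup of the constructed group $L$ (since $\langle X\rangle$ is dense in $H^\sigma\supseteq L$), so Proposition~\ref{dense:ssp} upgrades $L$ to a \ssp\ group, and the subspace topology $L$ inherits from $H^\sigma$ is the desired \ssp\ group topology on $G$.
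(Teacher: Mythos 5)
Your architecture is the same as the paper's: work in the variety $\mathcal{A}_{p^n}$ of Abelian groups of exponent dividing $p^n$, take $H=\Z(p^n)^\omega$, apply Lemma~\ref{v-free-group} with $\tau=\tau_n$ to obtain the dense $\mathcal{A}_{p^n}$-independent \ssp\ set $X$ and the set $D$ with $\langle X\rangle\cap H^D=\{0\}$, identify $\langle X\rangle\cong \Z(p^n)^{(\tau_n)}$, place the lower summands inside $H^D$, and finish with Proposition~\ref{dense:ssp}. However, your justification of the crucial embedding step has a genuine gap. You claim that $H^D$ contains a copy of $G'=\bigoplus_{i=m}^{n-1}\Z(p^i)^{(\tau_i)}$ ``using $|D|=\sigma\ge\tau_i$'', but the inequality $\sigma\ge\tau_i$ is simply not available: the hypotheses give only $\tau_i\le\tau_n\le 2^\sigma$, and $\tau_i$ may exceed \emph{every} possible choice of $\sigma$. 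Indeed, take $\tau_m=\dots=\tau_n$ equal to a \sa\ cardinal of countable cofinality lying strictly between $\C$ and $2^{\C}$ (e.g.\ $\aleph_\omega$ in a model where $\C<\aleph_\omega<2^{\C}$): since any witness $\sigma$ from Definition~\ref{def_sa} satisfies $\sigma^\omega=\sigma\le\tau_n$ while $\tau_n^\omega>\tau_n$, necessarily $\sigma<\tau_n=\tau_i$. So an embedding that allots coordinates of $H^D$ to the $\tau_i$ generators of $\Z(p^i)^{(\tau_i)}$ cannot exist in general, and your ``room'' argument breaks down exactly where the lemma's hypotheses are at their weakest.

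The fact you need is true, but for a different reason, and this is where the paper does real work: $H^D\cong\Z(p^n)^{\omega\times D}$ is a \emph{full power} of $\Z(p^n)$ over an index set of size $\sigma$, and by \cite[Lemma 4.1]{DiS} such a power has $\mathcal{A}_{p^n}$-rank at least $2^{\sigma}\ge\tau_n\ge\tau_i$; hence $H^D$ contains an $\mathcal{A}_{p^n}$-independent set of size $\tau_i$, i.e.\ a copy of $\Z(p^n)^{(\tau_i)}$, into which $\Z(p^i)^{(\tau_i)}$ embeds. (The paper splits $D$ into pairwise disjoint sets $D_m,\dots,D_{n-1}$ of size $\sigma$ and embeds the $i$-th summand into $H^{D_i}$, a convenient way to keep the images independent.) Three smaller corrections: your claim that the copy of $G'$ is \emph{dense} in $H^D$ is impossible when $n>m$, since every element of $G'$ is annihilated by $p^{n-1}$ and so its closure lies in the proper closed subgroup $\{h\in H^D: p^{n-1}h=0\}$ --- fortunately only a monomorphism $G'\to H^D$ is needed, because density of the constructed group $L$ in $H^\sigma$ already follows from $\langle X\rangle\subseteq L$; the free group $F_{\tau_n}(\mathcal{A}_{p^n})$ is exactly $\Z(p^n)^{(\tau_n)}$, with no ``lower exponent terms'' to account for; and the case of finite $\tau_n$ (allowed by the statement) should be disposed of separately, since then $G$ is finite and the discrete topology is \ssp.
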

\begin{proof}
Suppose that $\tau_n$ is finite.
Since $\tau_i\le\tau_j$ for all $i=m,\ldots,n$,
the group $G$ is finite. The discrete topology on $G$ is compact, 
so also \ssp.

Suppose now that $\tau_n$ is infinite.
Since $\tau_n$ is \sa, Definition \ref{def_sa}
implies that $\sigma^{\omega}=\sigma\leq\tau_n\leq 2^{\sigma}$ 
for some cardinal $\sigma$.

Let $H=\Z(p^n)^{\omega}$.
By Lemma \ref{v-free-group} applied to this $H$,  the variety $\mathcal{A}_{ p^n}=\{G: G$ is an Abelian group such that $p^ng=0$ for all $g\in G\}$ taken as 
$\mathcal{V}$ and the cardinal $\tau_n$ taken as $\tau$, we can find
a \ssp\ dense $\mathcal{A}_{p^n} $-independent subset $X$ of $H^{\sigma}$ and a set $D\subseteq \sigma$
such that $|X|=\tau_n, |D|=|\sigma\setminus D|=\sigma$ and 
$\langle X\rangle\cap H^{D}=\{0\}$, where 
$H^{D}=\{f\in H^{\sigma}:f(\alpha)=0$ for all $\alpha\in \sigma\setminus D\}$.
Since $X$ is $\mathcal{A}_{p^n} $-independent and $|X|=\tau_n$,
we have 
$\langle X\rangle\cong \Z(p^n)^{(\tau_n)}.$ Let $\varphi: \Z(p^n)^{(\tau_n)}\to
\langle X\rangle$ be an isomorphism. 
Define
\begin{equation}
\label{eq:G'}
G'=
\left\{\begin{array}{ll}
\{0\}  &
\textrm{ if } n=m\\
\bigoplus_{i=m}^{n-1} \Z(p^{i})^{(\tau_{i})} & \textrm{ if } n>m.
\\
\end{array}
\right.
\end{equation}
Clearly, $G=G'\oplus \Z(p^n)^{(\tau_n)}$.

We claim that there is a monomorphism $\psi:G'\to H^D$.
This is clear for $n=m$, as $G'=\{0\}$ by \eqref{eq:G'}.
Suppose now that $n>m$.
Then $G'=\bigoplus_{i=m}^{n-1} \Z(p^{i})^{(\tau_{i})}$ by \eqref{eq:G'}.
Let $D=\bigcup_{i=m}^{n-1} D_i$ be a decomposition of $D$ into pairwise disjoint sets $D_i$ such that $|D_i|=|D|$ for all $i=m,\dots,n-1$.

Let $i=m,\dots,n-1$ be arbitrary.
Note that $H^{D_i}\cong H^{D}\cong \Z(p^n)^{D}$,
so $r_{\mathcal{A}_{p^n}}(H^{D_i})\ge 2^{|D|}=2^{\sigma}$
by \cite[Lemma 4.1]{DiS}.  
Since $\tau_i\le\tau_n\le 2^{\sigma}\leq r_{\mathcal{A}_{p^n}}(H^{D_i})$,
we can fix a monomorphism 
$\psi_i\Z(p^n)^{(\tau_i)}\to H^{D_i}$.

Let $\psi:G'=\bigoplus_{i=m}^{n-1} \Z(p^{i})^{(\tau_{i})}\to \bigoplus_{i=m}^{n-1} H^{D_i}\cong H^D$
be the unique monomorphism extending each of $\psi_i$.
Since $\psi(G')\subseteq H^D$, $\varphi(\Z(p^n)^{(\tau_n)})=
\langle X\rangle$
and
$H^D\cap \langle X\rangle=\{0\}$, 
there is a unique monomorphism
$\chi: G=G'\oplus \Z(p^n)^{(\tau_n)}\to H^D\oplus \langle X\rangle\subseteq H^\sigma$ extending both $\varphi$ and $\psi$.

Since 
$\langle X\rangle$ is a dense \ssp\ subgroup of $H^\sigma$ and 
$\langle X\rangle =\varphi(\Z(p^n)^{(\tau_n)})\subseteq 
\chi(G)$,
the subgroup $\langle X\rangle$ of $\chi(G)$ is dense 
in $\chi(G)$.
Since
$\langle X\rangle$ is \ssp, 
Proposition \ref{dense:ssp} implies that
$\chi(G)$ is 
\ssp\ as well.
Since $\chi:G\to \chi(G)$ is an isomorphism,
$G$ admits a \ssp\ group topology.
\end{proof}

\begin{theorem}
\label{thm:for:torsion:groups}
Under SCH,  the following conditions are equivalent for every
torsion Abelian group $G$:
\begin{itemize}
\item[(i)] $G$ has a pseudocompact group topology;
\item[(ii)] $G$ has a strongly pseudocompact group topology;
\item[(iii)] $G$ has a \ssp\ group topology.
\end{itemize} 
\end{theorem}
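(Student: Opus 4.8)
The plan is to establish only the implication (i)$\Rightarrow$(iii), since the remaining implications (iii)$\Rightarrow$(ii)$\Rightarrow$(i) are immediate from the chain \eqref{two:implications}. The key observation is that essentially all of the substantive work has already been done: Lemma \ref{claim:1} equips a single ``nice-type'' summand with a \ssp\ group topology, and Theorem \ref{pseudocompact:torsion:group:are:sums:of:nice:groups} decomposes a pseudocompact torsion Abelian group into finitely many nice pieces. What remains is to verify that the hypotheses align under SCH and then to glue the pieces together.

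First I would dispose of the degenerate case: if $G$ is trivial, then its one-point topology is compact, hence \ssp, and all three conditions hold. So assume $G$ is non-trivial and carries a pseudocompact group topology. By Theorem \ref{pseudocompact:torsion:group:are:sums:of:nice:groups}, $G$ is isomorphic to a finite direct sum $\bigoplus_{l=1}^r N_l$ of nice groups. By Definition \ref{def:nice:group}, each summand has the form $N_l\cong\bigoplus_{i=m_l}^{n_l}\Z(p_l^i)^{(\tau_i^l)}$ for a prime $p_l$, integers $m_l\le n_l$, and a \emph{very good} sequence $(\tau_{m_l}^l,\dots,\tau_{n_l}^l)$ of cardinals.

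The crux is to check that each $N_l$ satisfies the hypotheses of Lemma \ref{claim:1}. By Definition \ref{def:good:sequence}(ii), ``very good'' means that $\tau_{n_l}^l$ is finite or admissible and that $\tau_i^l<\tau_{n_l}^l$ for $m_l\le i<n_l$; in particular $\tau_i^l\le\tau_{n_l}^l$ for every $i=m_l,\dots,n_l$. Here is where SCH enters: if $\tau_{n_l}^l$ is admissible, then Lemma \ref{3.4DiS}(ii) renders it \sa, while if $\tau_{n_l}^l$ is finite it is \sa\ by Definition \ref{def_sa}. Thus the top cardinal of each summand is \sa\ in either case, and Lemma \ref{claim:1} supplies a \ssp\ group topology on each $N_l$.

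Finally, since the direct sum is finite, $G\cong\bigoplus_{l=1}^r N_l=\prod_{l=1}^r N_l$ as topological groups once each $N_l$ carries its \ssp\ topology and $G$ is given the product topology; transporting this along the isomorphism yields a group topology on $G$. By Proposition \ref{product:ssp}, a finite product of \ssp\ spaces is \ssp, so $G$ is \ssp\ in this topology, completing (i)$\Rightarrow$(iii). I do not anticipate a genuine obstacle: the only point requiring care is the alignment of definitions---that ``very good'' delivers both the inequalities $\tau_i^l\le\tau_{n_l}^l$ and, via SCH, the selective admissibility of $\tau_{n_l}^l$ demanded by Lemma \ref{claim:1}---after which the finite-product closure property of \ssp\ spaces finishes the argument.
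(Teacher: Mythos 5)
Your proposal is correct and follows essentially the same route as the paper's own proof: decompose $G$ into finitely many nice summands via Theorem \ref{pseudocompact:torsion:group:are:sums:of:nice:groups}, use SCH together with Lemma \ref{3.4DiS}~(ii) and Definition \ref{def_sa} to see that the top cardinal of each very good sequence is \sa, apply Lemma \ref{claim:1} to each summand, and glue with Proposition \ref{product:ssp}. The only (harmless) differences are that you treat the trivial case explicitly and spell out the identification of the finite direct sum with the finite product.
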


\proof
The implications (iii)$\Rightarrow $(ii)$\Rightarrow$(i) follow from equation \eqref{two:implications}.

(i)$\Rightarrow $(iii). 
Without loss of generality, we shall assume that $G$ is non-trivial.
By Theorem \ref{pseudocompact:torsion:group:are:sums:of:nice:groups},
$G\cong \bigoplus_{k=1}^j G_k$, where $j\in\N^+$ and each 
$G_k$ is a nice group. 
By Proposition \ref{product:ssp},
it suffices to prove that each $G_k$ admits a \ssp\ group topology.

Fix $k=1,\dots,j$. Since $G_k$ is nice,
we can apply Definition \ref{def:nice:group}
to fix a prime number $p$,
positive integers
$m,n\in\N^+$ and a very good sequence  $(\tau_m,\tau_{m+1},\dots,\tau_n)$ of cardinals 
such that $G_k\cong \bigoplus_{i=m}^n \Z(p^i)^{(\tau_i)}$.
By Definition \ref{def:good:sequence}~(ii),
$\tau_n$ is either finite or admissible and 
$\tau_i< \tau_n$ for every $i\in\N^+$ with $m\le i<  n$.
If $\tau_n$ is finite, then 
it is selectively admissible by Definition \ref{def_sa}.
Suppose now that $\tau_n$ is 
admissible.
Since SCH holds, $\tau_n$ is \sa\ by Lemma \ref{3.4DiS}~(ii).
By Lemma \ref{claim:1},
$G_k$ admits a \ssp\ group topology.
\endproof

\section{Selectively sequentially pseudocompact topologies on torsion-free Abelian groups}

Recall that a subgroup $H$ of an Abelian group $G$ is said to be {\em essential\/} in $G$ provided that $N\cap H\not=\{0\}$ for every non-zero subgroup $N$ of $G$.  
\begin{lemma}
\label{essential:lemma}
If $F$ is a maximal $\mathcal{A}$-free subgroup of an Abelian group $G$,
then
$H=t(G)+F=t(G)\oplus F$ is an essential subgroup of $G$.
\end{lemma}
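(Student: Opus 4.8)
The plan is to reduce the essentiality of $H$ to one structural fact: that the quotient $G/F$ is a torsion group, i.e. every element of $G$ has a nonzero integer multiple lying in $F$. This is precisely the point where maximality of $F$ is used. Before that, I would dispose of the (easy) direct-sum assertion. Since $\mathcal{A}$ is the variety of all Abelian groups, an $\mathcal{A}$-free group is free Abelian and hence torsion-free, so $t(F)=\{0\}$. As $t(G)\cap F\subseteq t(F)=\{0\}$, the sum $t(G)+F$ is direct, giving $H=t(G)\oplus F$.

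The crucial step is to show that $G/F$ is torsion, and I would argue by contradiction. Suppose some $g\in G$ satisfies $ng\notin F$ for every nonzero integer $n$; in particular $g$ has infinite order and $\langle g\rangle\cap F=\{0\}$. Fix an $\mathcal{A}$-base $X$ of $F$ (so that $F=\langle X\rangle$ and $X$ is $\mathcal{A}$-independent, by Definition \ref{def:V-free}). I claim $X\cup\{g\}$ is again $\mathcal{A}$-independent: by Lemma \ref{lemma2.3_DiS}~(i) it suffices to verify this on finite subsets, and since there is no nontrivial $\Z$-relation among the elements of $X$ and no nonzero multiple of $g$ lies in $\langle X\rangle=F$, no finite subset of $X\cup\{g\}$ admits a nontrivial relation. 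Hence $F'=\langle X\cup\{g\}\rangle=F\oplus\langle g\rangle$ is an $\mathcal{A}$-free subgroup strictly containing $F$ (as $g\notin F$), contradicting the maximality of $F$. Therefore for every $g\in G$ there exists a nonzero $n\in\Z$ with $ng\in F$.

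Finally I would deduce essentiality. It suffices to show that $\langle g\rangle\cap H\not=\{0\}$ for every nonzero $g\in G$, since any nonzero subgroup $N$ of $G$ contains some such cyclic subgroup $\langle g\rangle\subseteq N$. If $g$ is torsion, then $g\in t(G)\subseteq H$ and $g\not=0$, so $\langle g\rangle\cap H\not=\{0\}$. If $g$ has infinite order, then by the previous step there is a nonzero $n\in\Z$ with $ng\in F\subseteq H$, and $ng\not=0$ because $g$ has infinite order; thus again $\langle g\rangle\cap H\not=\{0\}$. In either case the cyclic subgroup generated by $g$ meets $H$ nontrivially, and essentiality follows.

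The main obstacle is isolating and proving the torsion-quotient fact from the maximality hypothesis; the genuinely technical point there is the verification that $F\oplus\langle g\rangle$ is $\mathcal{A}$-free, which I reduce to finitary $\mathcal{A}$-independence via Lemma \ref{lemma2.3_DiS}~(i). Everything after that is a routine case analysis on whether $g$ is torsion.
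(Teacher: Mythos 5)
Your proof is correct and follows essentially the same route as the paper's: the paper likewise splits on whether a nonzero element of $N$ is torsion and, for a non-torsion $x$, uses maximality of $F$ to rule out $F\cap \langle x\rangle=\{0\}$ by noting that $F\oplus\langle x\rangle$ would then be a strictly larger $\mathcal{A}$-free subgroup. Your only departures are cosmetic: you isolate the intermediate claim that every element of $G$ has a nonzero multiple in $F$ before deducing essentiality, and you spell out via Lemma \ref{lemma2.3_DiS}~(i) the $\mathcal{A}$-independence verification that the paper leaves implicit.
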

\begin{proof}
Since $F$ is torsion-free and $t(G)$ is torsion, $t(G)\cap F=\{0\}$, which means that the sum $H=t(G)+F$ is direct.

Let $N$ be a non-zero subgroup of $G$. Fix a non-zero element $x\in N$. If $x$ is torsion, then $x\in t(G)$, so $x\in t(G)\cap N\subseteq H\cap N$, which implies $H\cap N\not=\{0\}$. Suppose now that $x$ is non-torsion. Then $\langle x\rangle$ is an infinite cyclic subgroup of $G$.
If $F\cap \langle x\rangle=\{0\}$, then $F'=F+\langle x\rangle=
F\oplus \langle x\rangle$ would be an $\mathcal{A}$-free subgroup of $G$ containing $F$ as a proper subgroup, contradicting the maximality of $F$. Therefore,
$F\cap \langle x\rangle\not=\{0\}$, which implies $H\cap N\not=\{0\}$. 
\end{proof}

\begin{theorem}\label{abelian}
Suppose that $G$ is an Abelian group and $\sigma^{\omega}=\sigma\leq r_\mathcal{A}(G)\leq |G|\leq 2^{\sigma}$
for some infinite cardinal $\sigma.$ Then $G$ is isomorphic to a dense connected, locally connected, \ssp\ subgroup 
of $\T^{\sigma}.$ 
\end{theorem}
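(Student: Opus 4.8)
The plan is to realise $G$ as a dense subgroup of $\T^{\sigma}$ that contains a dense \ssp\ subgroup; all four required properties then follow at once. Put $\tau=r_{\A}(G)$. The hypothesis gives $\sigma^{\omega}=\sigma\le\tau\le 2^{\sigma}$, so I would apply Lemma~\ref{v-free-group} with $\V=\A$ and $H=\T$ (a compact metric group with $r_{\A}(\T)=\C\ge\omega$): it produces a dense $\A$-independent \ssp\ subset $X$ of $\T^{\sigma}$ with $|X|=\tau$, together with a set $D\subseteq\sigma$ such that $|D|=|\sigma\setminus D|=\sigma$ and $\langle X\rangle\cap\T^{D}=\{0\}$. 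As $X$ is $\A$-independent with $|X|=\tau$, the subgroup $\langle X\rangle$ is free abelian of rank $\tau$. It therefore suffices to construct a group monomorphism $\Phi:G\to\T^{\sigma}$ with $\langle X\rangle\subseteq\Phi(G)$.

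To build $\Phi$, I would first fix a maximal $\A$-free subgroup $F$ of $G$. Then $F$ is free abelian of rank $r_{\A}(G)=\tau$, and by Lemma~\ref{essential:lemma} the subgroup $H=t(G)\oplus F$ is essential in $G$. Fix an isomorphism $\varphi:F\to\langle X\rangle$. Next I embed the torsion part into the reserved coordinates: since $|t(G)|\le|G|\le 2^{\sigma}=2^{|D|}$ and the torsion subgroup of $\T^{D}\cong\T^{\sigma}$ is divisible with $p$-rank equal to $2^{\sigma}$ for every prime $p$, the divisible hull of $t(G)$ embeds into $\T^{D}$; fix a resulting monomorphism $\iota:t(G)\to\T^{D}$. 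Because $\iota(t(G))\subseteq\T^{D}$, $\varphi(F)=\langle X\rangle$ and $\langle X\rangle\cap\T^{D}=\{0\}$, the homomorphism $\psi:H=t(G)\oplus F\to\T^{\sigma}$ defined by $\psi(a+b)=\iota(a)+\varphi(b)$ (for $a\in t(G)$, $b\in F$) is injective. Finally, $\T^{\sigma}$ is divisible and hence an injective $\Z$-module, so $\psi$ extends to a homomorphism $\Phi:G\to\T^{\sigma}$.

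The key point is that this extension is automatically injective. Indeed, $\Phi$ restricts to the injective map $\psi$ on $H$, so $\ker\Phi\cap H=\{0\}$; since $H$ is essential in $G$, a nonzero $\ker\Phi$ would meet $H$ nontrivially, a contradiction, whence $\ker\Phi=\{0\}$. Now $\Phi(F)=\varphi(F)=\langle X\rangle$, so $\Phi(G)$ contains the dense \ssp\ subgroup $\langle X\rangle$. Consequently $\Phi(G)$ is dense in $\T^{\sigma}$ and, by Proposition~\ref{dense:ssp}, \ssp; in particular it is pseudocompact by \eqref{two:implications}, so its completion is all of $\T^{\sigma}$. Since $\T$ is connected and locally connected, \cite[Fact 2.10]{DiS} (applied exactly as in the proof of Lemma~\ref{lem:2.5}) shows that $\Phi(G)$ is connected and locally connected. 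Thus $\Phi$ identifies $G$ with a dense connected, locally connected, \ssp\ subgroup of $\T^{\sigma}$, as required.

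The heart of the proof is ensuring that the injectivity of $\psi$ survives the extension to all of $G$; this is precisely what the essentiality of $H$ (Lemma~\ref{essential:lemma}) buys, reducing injectivity of $\Phi$ to injectivity on $H$. The role of Lemma~\ref{v-free-group} is twofold: it supplies the dense \ssp\ free subgroup $\langle X\rangle$ carrying all the topological content, and it reserves a full copy $\T^{D}$ of $\T^{\sigma}$, disjoint from $\langle X\rangle$ and large enough to accommodate the divisible hull of $t(G)$. I expect the only genuinely computational step to be verifying this last embedding, i.e.\ that each $p$-primary component of the torsion subgroup of $\T^{D}$ is divisible of $p$-rank $2^{\sigma}$ (using $|D|=\sigma$); the transfer of connectedness and local connectedness through the completion, and the \ssp\ conclusion, are then immediate from the quoted results.
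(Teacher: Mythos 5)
Your proof is correct and follows essentially the same route as the paper's own argument: fix a maximal $\A$-free subgroup $F$ of $G$, use Lemma~\ref{v-free-group} (as in Theorem~\ref{free-abelian-ssp}) to map $F$ isomorphically onto the dense \ssp\ subgroup $\langle X\rangle$ of $\T^\sigma$, embed $t(G)$ disjointly from it, extend to all of $G$ by divisibility of $\T^\sigma$, and use essentiality of $t(G)\oplus F$ (Lemma~\ref{essential:lemma}) to see that the extension stays injective, finishing with Proposition~\ref{dense:ssp} and the transfer of connectedness and local connectedness from the compact completion. The only local difference is how the disjointness $\iota(t(G))\cap\langle X\rangle=\{0\}$ is secured: you place $t(G)$ inside the reserved subgroup $\T^D$ and invoke $\langle X\rangle\cap\T^D=\{0\}$, whereas the paper embeds $t(G)$ into $\T^\sigma$ with no constraint (via \cite[Lemma 3.16]{DS-Forcing}) and gets the same disjointness for free because the image of $t(G)$ is torsion while $\langle X\rangle$ is torsion-free, so the set $D$ supplied by Lemma~\ref{v-free-group} is not actually needed in this theorem.
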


\proof
Let $F$ be a maximal $\mathcal{A}$-free subgroup of the group $G$; the existence of such $F$ follows from Zorn's lemma.
Then $r_\mathcal{A}(F)=r_\mathcal{A}(G)\ge\sigma^\omega\ge\C$, as $\sigma$ is infinite.
Since $r_{\mathcal{A}}(F)=r_{\mathcal{A}}(G)\le |G|\le 2^\sigma$,
the cardinal $r_{\mathcal{A}}(F)$ is \sa\ by Definition \ref{def_sa}.
Arguing as in the proof of Theorem
\ref{free-abelian-ssp}, we can fix a monomorphism
$f: F\to \T^\sigma$ such that $f(F)$ is a dense \ssp\ subgroup
of $\T^\sigma$.

Since $\T^\sigma$ is  divisible, $r_\mathcal{A}(t(G))=0\le 2^\sigma=r_\mathcal{A}(\T^\sigma)$ and
$r_{\mathcal{A}_p}(t(G))\le |G|\le 2^\sigma=r_{\mathcal{A}_p}(\T^\sigma)$ for every prime number $p$,
we can use \cite[Lemma 3.16]{DS-Forcing}
to find a monomorphism $g: t(G)\to\T^\sigma$.

Let $h:H=t(G)\oplus F\to \T^\sigma$ be the unique homomorphism
extending both $g$ and $f$.
Since $h(t(G))$ is torsion and $h(F)=f(G)$ is torsion-free,
$h(t(G))\cap h(F)=\{0\}$.
Since both $g$ and $f$ are monomorphisms, so is $h$; see
\cite[Lemma 3.10~(ii)]{DS-Forcing}.
Since $\T^\sigma$ is divisible, there exists a homomorphism
$\varphi:G\to\T^\sigma$ extending $h$; see
\cite[Theorem 21.1]{F}.
Since $H$ is essential in $G$ by Lemma \ref{essential:lemma}
and $h$ is a monomorphism,  $\varphi$ is a monomorphism as well; see
\cite[Lemma 3.5]{DS-Forcing}.
Therefore, $G$ and $\varphi(G)$ are isomorphic.

Since $\varphi(F)=f(F)$ is a dense \ssp\ subgroup of $\T^\sigma$
and $\varphi(F)\subseteq \varphi(G)$, the group $\varphi(G)$ is 
\ssp\ by Proposition \ref{dense:ssp}.
Finally, 
since $\T$ is connected and locally connected, \cite[Fact 2.11]{DiS} implies that $\varphi(G)$ is also connected and locally connected.
\endproof

\begin{theorem}
\label{sch_connected}
\label{rank:coincides:with:cardinaity}
Under SCH, the following conditions are equivalent for every 
Abelian group $G$ satisfying $r_\mathcal{A}(G)=|G|$:
\begin{itemize}
\item[(i)] $G$ admits a pseudocompact group topology;
\item[(ii)] $G$ admits a strongly pseudocompact group topology;
\item[(iii)] $G$ admits a \ssp\ group topology;
\item[(iv)] $G$ admits a connected, locally connected, \ssp\ group topology.
\end{itemize}
\end{theorem}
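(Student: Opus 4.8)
The plan is to prove the cycle of implications $(\mathrm{i})\Rightarrow(\mathrm{iv})\Rightarrow(\mathrm{iii})\Rightarrow(\mathrm{ii})\Rightarrow(\mathrm{i})$, in which every step except the first is routine. Indeed, the implication $(\mathrm{iv})\Rightarrow(\mathrm{iii})$ is trivial, since a connected, locally connected, \ssp\ group topology is in particular a \ssp\ group topology, while $(\mathrm{iii})\Rightarrow(\mathrm{ii})$ and $(\mathrm{ii})\Rightarrow(\mathrm{i})$ are immediate from the implications in \eqref{two:implications}. Thus all the content lies in establishing $(\mathrm{i})\Rightarrow(\mathrm{iv})$, and the strategy there is simply to manufacture an infinite cardinal $\sigma$ for which the hypotheses of Theorem \ref{abelian} are met.

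Before extracting $\sigma$, I would first record that the standing assumption $r_\mathcal{A}(G)=|G|$ forces $G$ to be infinite. A finite Abelian group admits no non-empty $\mathcal{A}$-independent subset, because every non-zero element has finite order and therefore cannot be mapped to a generator of $\Z$ by any homomorphism defined on the cyclic subgroup it generates; hence $r_\mathcal{A}(G)=0$ for finite $G$, whereas $|G|\ge 1$, so the equality $r_\mathcal{A}(G)=|G|$ can only hold when $G$ is infinite. With $G$ infinite in hand, assumption $(\mathrm{i})$ exhibits a pseudocompact group of cardinality $|G|$, so $|G|$ is an admissible cardinal in the sense of Definition \ref{def:admissible:cardinal}. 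Invoking SCH, Lemma \ref{3.4DiS}~(ii) shows that the admissible cardinal $|G|$ is in fact \sa, so Definition \ref{def_sa} supplies an infinite cardinal $\sigma$ with $\sigma^{\omega}=\sigma\le|G|\le 2^{\sigma}$.

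It remains only to feed this $\sigma$ into Theorem \ref{abelian}. Replacing $|G|$ by $r_\mathcal{A}(G)$ in the left-hand comparison (permissible since the two are equal by hypothesis), the inequalities become $\sigma^{\omega}=\sigma\le r_\mathcal{A}(G)\le|G|\le 2^{\sigma}$, which are precisely the hypotheses of Theorem \ref{abelian}. That theorem then yields an isomorphism of $G$ onto a dense connected, locally connected, \ssp\ subgroup of $\T^{\sigma}$; pulling the subspace topology back along this isomorphism equips $G$ with a connected, locally connected, \ssp\ group topology, which is exactly $(\mathrm{iv})$. I do not anticipate a genuine obstacle here: the substantive construction---producing a dense \ssp\ embedding out of the $\mathcal{V}$-independent sets furnished by Lemma \ref{v-free-group}---has already been carried out inside Theorem \ref{abelian}. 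The only points demanding any care are the observation that the hypothesis forces $G$ to be infinite and the verification that the single cardinal $\sigma$ delivered by selective admissibility simultaneously satisfies all of the required comparisons.
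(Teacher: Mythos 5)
Your proposal is correct and follows essentially the same route as the paper's proof: deduce from $r_\mathcal{A}(G)=|G|$ that $G$ is infinite, use Definition \ref{def:admissible:cardinal}, Lemma \ref{3.4DiS}~(ii) and Definition \ref{def_sa} under SCH to extract an infinite cardinal $\sigma$ with $\sigma^{\omega}=\sigma\le r_\mathcal{A}(G)\le|G|\le 2^{\sigma}$, and then apply Theorem \ref{abelian}, with the remaining implications handled identically via \eqref{two:implications}. Your explicit verification that a finite Abelian group has $\mathcal{A}$-rank $0$ (so the hypothesis forces $G$ to be infinite) is a correct fleshing-out of a step the paper states as a bare observation.
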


\begin{proof} 
The implication (iv)$\Rightarrow$(iii) is trivial, while 
the
implications (iii)$\Rightarrow$(ii)$\Rightarrow$(i) follow from equation
\eqref{two:implications}.

(i)$\Rightarrow$(iv)
Suppose that $G$ admits a pseudocompact group topology. 
Observe that the equality $r_\mathcal{A}(G)= |G|$ implies that $G$ is infinite, so
$|G|$ is admissible by Definition \ref{def:admissible:cardinal}.
Since SCH is assumed, $|G|$ is \sa\
by Lemma \ref{3.4DiS}~(ii).
By Definition \ref{def_sa},
there exists an infinite cardinal $\sigma$ such that 
$\sigma^{\omega}=\sigma\leq  |G|\leq 2^{\sigma}.$
Since
$r_\mathcal{A}(G)= |G|$, 
the assumptions of 
Theorem \ref{abelian} are satisfied.
Applying this theorem, we conclude that
$G$ is isomorphic to a dense 
connected, locally connected, \ssp\ subgroup 
of $\T^{\sigma}.$ 
\end{proof}

\begin{corollary}\label{sch_torsion-free}
Under SCH, the following conditions are equivalent for every 
torsion-free
Abelian group $G$:
\begin{itemize}
\item[(i)] $G$ admits a pseudocompact group topology;
\item[(ii)] $G$ admits a strongly pseudocompact group topology;
\item[(iii)] $G$ admits a \ssp\ group topology;
\item[(iv)] $G$ admits a connected, locally connected, \ssp\ group topology.
\end{itemize}
\end{corollary}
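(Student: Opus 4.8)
The plan is to deduce this corollary directly from Theorem \ref{sch_connected} by showing that its standing hypothesis $r_\mathcal{A}(G)=|G|$ is automatically satisfied once $G$ is torsion-free and carries a pseudocompact topology. As in the earlier corollaries, the implications (iv)$\Rightarrow$(iii)$\Rightarrow$(ii)$\Rightarrow$(i) require no work: the first is trivial and the last two follow from \eqref{two:implications}. Hence the entire content is the implication (i)$\Rightarrow$(iv), and for this it suffices to verify $r_\mathcal{A}(G)=|G|$ and then quote Theorem \ref{sch_connected} verbatim.

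So I would assume (i) and reduce to the non-trivial case, in which $G$, being torsion-free, contains an element of infinite order and is therefore infinite. The first step is to identify the rank: for an Abelian group the $\mathcal{A}$-independent subsets are exactly the $\Z$-linearly independent ones (i.e.\ bases of free Abelian subgroups), so $r_\mathcal{A}(G)$ equals the torsion-free rank $r_0(G)=\dim_{\mathbb{Q}}(G\otimes\mathbb{Q})$. Since $G$ is torsion-free it embeds into the $\mathbb{Q}$-vector space $G\otimes\mathbb{Q}\cong\mathbb{Q}^{(r_0(G))}$, which gives $|G|\le\max(r_0(G),\omega)$, while $r_0(G)\le|G|$ is obvious. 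The second step is to rule out finite rank: an infinite group admitting a pseudocompact group topology has admissible cardinality by Definition \ref{def:admissible:cardinal}, and every infinite admissible cardinal is at least $\C$, in particular uncountable; were $r_0(G)$ finite, the embedding above would force $G$ to be countable, a contradiction. Thus $r_0(G)\ge\omega$, whence the two cardinal inequalities collapse to $|G|=r_0(G)=r_\mathcal{A}(G)$, and Theorem \ref{sch_connected} delivers (iv).

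The only non-routine ingredient is the classical cardinality lower bound $|G|\ge\C$ for infinite pseudocompact groups, equivalently the statement that infinite admissible cardinals are at least $\C$; I expect this to be the single point needing an external citation, since it is precisely what excludes the low-rank torsion-free groups (such as finite-rank subgroups of $\mathbb{Q}^n$) that would otherwise violate $r_\mathcal{A}(G)=|G|$. Everything else—the identification $r_\mathcal{A}=r_0$ and the estimate $|G|\le\max(r_0(G),\omega)$—is elementary, so the main obstacle is conceptual rather than computational: recognising that torsion-freeness together with this cardinality bound is exactly what promotes the conditional Theorem \ref{sch_connected} into an unconditional equivalence for the whole class.
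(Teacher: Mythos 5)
Your proposal is correct and takes essentially the same route as the paper: both reduce everything to the implication (i)$\Rightarrow$(iv), which is obtained by verifying $r_\mathcal{A}(G)=|G|$ and then invoking Theorem \ref{sch_connected}, with the remaining implications handled exactly as you do via \eqref{two:implications}. The only cosmetic difference is that the paper gets uncountability from the rank bound $r_\mathcal{A}(G)\ge\C$ of \cite[Theorem 3.8]{DiS} and treats the equality $r_\mathcal{A}(G)=|G|$ for uncountable torsion-free groups as known, whereas you prove that equality via the divisible-hull embedding and use the cardinality bound $|G|\ge\C$ for infinite pseudocompact groups --- precisely the one external fact you correctly identified as needing a citation.
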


\begin{proof}
Since $G$ is torsion-free,
$G$ must be infinite.

(i)$\Rightarrow$(iv)
Suppose that $G$ admits a pseudocompact group topology. 
Since $G$ is infinite,
$|G|\geq r_{\mathcal{A}}(G)\ge\C$ by \cite[Theorem 3.8]{DiS}.
Since $G$ is an uncountable, torsion-free Abelian group, $r_\mathcal{A}(G)= |G|$.
By the implication (i)$\Rightarrow$(iv) of Theorem \ref{rank:coincides:with:cardinaity}, $G$ admits a connected, locally connected, \ssp\ group topology.

The implication (iv)$\Rightarrow$(iii) is trivial, while 
the
implications (iii)$\Rightarrow$(ii)$\Rightarrow$(i) follow from equation
\eqref{two:implications}.
\end{proof}

\begin{corollary}\label{sch_non-torsion}
Under SCH, the following conditions are equivalent for every 
Abelian group $G$:
\begin{itemize}
\item[(i)] $G/t(G)$ admits a pseudocompact group topology;
\item[(ii)] $G/t(G)$ admits a strongly pseudocompact group topology;
\item[(iii)] $G/t(G)$ admits a \ssp\ group topology;
\item[(iv)] $G/t(G)$ admits a connected, locally connected, \ssp\ group topology.
\end{itemize}
\end{corollary}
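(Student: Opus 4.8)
The plan is to reduce this corollary immediately to the torsion-free case already handled in Corollary \ref{sch_torsion-free}. The crucial elementary observation is that the quotient $G/t(G)$ is always a torsion-free Abelian group, regardless of the structure of $G$. Indeed, suppose $g+t(G)$ is a torsion element of $G/t(G)$, so that $n(g+t(G))=t(G)$ for some $n\in\N^+$; this means $ng\in t(G)$, hence $m(ng)=0$ for some $m\in\N^+$, which shows $g$ itself is a torsion element of $G$. Thus $g\in t(G)$ and $g+t(G)$ is the zero element of $G/t(G)$. Once this is recorded, the whole statement is nothing more than Corollary \ref{sch_torsion-free} applied to the torsion-free group $G/t(G)$.

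Concretely, I would first state that $G/t(G)$ is torsion-free by the computation just described. Then I would invoke Corollary \ref{sch_torsion-free} with $G$ replaced by $G/t(G)$: since that corollary establishes the equivalence of conditions (i)--(iv) for \emph{every} torsion-free Abelian group, and $G/t(G)$ is such a group, the equivalence of the four conditions in the present statement follows at once. No additional set-theoretic work beyond SCH (already assumed in Corollary \ref{sch_torsion-free}) is needed, and the implications (iv)$\Rightarrow$(iii)$\Rightarrow$(ii)$\Rightarrow$(i) are in any case the trivial ones coming from \eqref{two:implications}.

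There is essentially no obstacle here: the only content is the verification that $G/t(G)$ is torsion-free, which is the standard fact that the torsion subgroup is characteristic and the quotient by it is torsion-free. The entire difficulty has already been absorbed into the proof of Corollary \ref{sch_torsion-free} (and, through it, into Theorem \ref{rank:coincides:with:cardinaity} and Theorem \ref{abelian}), so this corollary is purely a matter of citing the earlier result for the correct group.

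\begin{proof}
The quotient $G/t(G)$ is torsion-free: if $g+t(G)$ is a torsion element of $G/t(G)$, then $ng\in t(G)$ for some $n\in\N^+$, so $m(ng)=0$ for some $m\in\N^+$; hence $g\in t(G)$ and $g+t(G)$ is the zero element of $G/t(G)$. Applying Corollary \ref{sch_torsion-free} to the torsion-free Abelian group $G/t(G)$ yields the equivalence of conditions (i)--(iv).
\end{proof}
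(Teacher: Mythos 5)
Your reduction is exactly the one the paper uses: observe that $G/t(G)$ is torsion-free and cite Corollary \ref{sch_torsion-free}. The only place where you and the paper diverge is the degenerate case. The paper first splits off the case where $G$ is torsion, i.e.\ where $G/t(G)$ is the trivial group, and notes that all four conditions then hold (hence are equivalent) trivially; only for non-torsion $G$ does it invoke Corollary \ref{sch_torsion-free}. You instead apply Corollary \ref{sch_torsion-free} uniformly, on the grounds that the trivial group is vacuously torsion-free. That is defensible if one reads the statement of Corollary \ref{sch_torsion-free} literally, but it leans on that statement at a point where the paper's proof of it does not actually apply: that proof opens with the assertion that a torsion-free group must be infinite, which fails for the trivial group, so what is really proved there covers only non-trivial torsion-free groups. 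The substance of your argument is unaffected---the trivial group, with its unique compact (hence \ssp, connected and locally connected) topology, satisfies (i)--(iv) outright---but to make the proof self-contained you should add the one-line remark disposing of the case $G/t(G)=\{0\}$ separately, exactly as the paper does, rather than routing that case through Corollary \ref{sch_torsion-free}.
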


\begin{proof}
If $G$ is torsion, then $G/t(G)$ is the trivial group, and so all four conditions are equivalent for $G/t(G)$.
If $G$ is non-torsion, then $G/t(G)$ is a torsion-free group, so 
Corollary \ref{sch_torsion-free} applies to $G/t(G)$.
\end{proof}

\begin{theorem}
\label{thm:6.5}
Under GCH, the following conditions are equivalent for every 
Abelian group $G$ satisfying the inequality
$|G|\leq 2^{r_\mathcal{A}(G)}$:
\begin{itemize}
\item[(i)] $G$ admits a pseudocompact group topology;
\item[(ii)] $G$ admits a strongly pseudocompact group topology;
\item[(iii)] $G$ admits a \ssp\ group topology;
\item[(iv)] $G$ admits a connected, locally connected, \ssp\ group topology.
\end{itemize}
\end{theorem}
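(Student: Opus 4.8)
The implications (iv)$\Rightarrow$(iii)$\Rightarrow$(ii)$\Rightarrow$(i) are immediate from the chain \eqref{two:implications}, so the entire content is the implication (i)$\Rightarrow$(iv). First I would dispose of degenerate cases: if $G$ is trivial there is nothing to prove, so assume $G\neq\{0\}$. The standing hypothesis $|G|\le 2^{r_\mathcal{A}(G)}$ then forces $r_\mathcal{A}(G)$ to be infinite, for if $r_\mathcal{A}(G)$ were finite then $2^{r_\mathcal{A}(G)}$ would be finite and hence so would $|G|$, whereas a finite nontrivial group satisfies $r_\mathcal{A}(G)=0$ and $|G|\ge 2>1=2^{0}$. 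Assuming (i), the group $G$ carries a pseudocompact group topology and is infinite, so $r_\mathcal{A}(G)\ge\C$ by \cite[Theorem 3.8]{DiS}.

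The plan is to obtain the topology in (iv) from a single application of Theorem \ref{abelian} with the choice $\sigma:=r_\mathcal{A}(G)$. For this $\sigma$, three of the four cardinal inequalities demanded by Theorem \ref{abelian} hold automatically: $\sigma\le r_\mathcal{A}(G)$ is an equality, $r_\mathcal{A}(G)\le|G|$ holds for every Abelian group, and $|G|\le 2^{\sigma}=2^{r_\mathcal{A}(G)}$ is precisely the hypothesis imposed on $G$. Thus the proof reduces to the single remaining requirement $\sigma^{\omega}=\sigma$, i.e. $r_\mathcal{A}(G)^{\omega}=r_\mathcal{A}(G)$. This is exactly the point at which GCH enters, and it enters only through Lemma \ref{3.4DiS}~(iii): once I know that $r_\mathcal{A}(G)$ is an \emph{admissible} cardinal, that lemma yields $r_\mathcal{A}(G)^{\omega}=r_\mathcal{A}(G)$, all hypotheses of Theorem \ref{abelian} are satisfied, and the theorem exhibits $G$ as (isomorphic to) a dense connected, locally connected, \ssp\ subgroup of $\T^{\sigma}$. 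The subgroup topology inherited from $\T^{\sigma}$ is then the topology required in (iv).

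Everything therefore hinges on the claim that the free rank $r_\mathcal{A}(G)$ of an infinite Abelian group admitting a pseudocompact group topology is itself admissible, and this is the step I expect to be the main obstacle. Heuristically it should hold because the torsion-free quotient $G/t(G)$ must again admit a pseudocompact-type structure: one has $r_\mathcal{A}(G/t(G))=r_\mathcal{A}(G)$, and for a torsion-free group this rank equals the cardinality, so the claim amounts to saying that $|G/t(G)|$ is an admissible cardinality. I would justify it by appealing to the necessary conditions for the existence of pseudocompact group topologies developed in \cite{DiS} (the same circle of results that yields \cite[Theorem 3.8]{DiS}), and explicitly \emph{not} by the naive topological quotient $G/\overline{t(G)}$, which may destroy the free rank entirely --- already for $K=\prod_{p}\Z(p)$ one has $\overline{t(K)}=K$ while $r_\mathcal{A}(K)=\C$ --- and so cannot be used to detect the correct cardinal. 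The subtlety is sharp: under GCH one has $2^{r_\mathcal{A}(G)}=r_\mathcal{A}(G)^{+}$, so the hypothesis permits exactly $|G|\in\{r_\mathcal{A}(G),\,r_\mathcal{A}(G)^{+}\}$, and in the borderline case $|G|=r_\mathcal{A}(G)^{+}$ with $\cf(r_\mathcal{A}(G))=\omega$ there is \emph{no} admissible $\sigma\le r_\mathcal{A}(G)$ with $2^{\sigma}\ge|G|$, which would make Theorem \ref{abelian} inapplicable. Ruling out this configuration --- equivalently, proving $\cf(r_\mathcal{A}(G))>\omega$ --- is exactly the admissibility of $r_\mathcal{A}(G)$, and is where the genuine work of the proof is concentrated.
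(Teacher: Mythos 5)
Your overall architecture coincides with the paper's: dispose of the degenerate case, set $\sigma=r_\mathcal{A}(G)$, note that $\sigma\le r_\mathcal{A}(G)\le|G|\le 2^{\sigma}$ holds by hypothesis, and observe that the only remaining requirement of Theorem \ref{abelian} is $\sigma^{\omega}=\sigma$, which under GCH follows from Lemma \ref{3.4DiS}~(iii) once $\sigma$ is known to be \emph{admissible}. However, your proposal stops exactly at that point: the admissibility of $r_\mathcal{A}(G)$ is declared to be ``the main obstacle'' and ``where the genuine work of the proof is concentrated,'' but it is never proved. So what you have is a correct identification of the one non-trivial step, not a proof of it. The paper closes this step with a single citation: since $r_\mathcal{A}(G)\ge 1$ forces $G$ to be non-torsion, \cite[Corollary 1.19]{DGB} yields directly that $r_\mathcal{A}(G)$ is an admissible cardinal; everything else is as you wrote.

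Moreover, the route you sketch for filling the gap does not work as stated. You propose to pass to the abstract quotient $G/t(G)$, use $r_\mathcal{A}(G/t(G))=r_\mathcal{A}(G)=|G/t(G)|$, and conclude that this cardinal is admissible; but that conclusion requires knowing that $G/t(G)$ \emph{admits a pseudocompact group topology}, and nothing cited gives this. The subgroup $t(G)$ need not be closed in any pseudocompact group topology on $G$ (your own example $\prod_p\Z(p)$ makes this point), so $G/t(G)$ is not a topological quotient of $G$, and the transfer ``$G$ admits a pseudocompact group topology $\Rightarrow$ $G/t(G)$ does'' is itself a non-trivial claim; within this paper it is only obtainable (under SCH) via Corollary \ref{sch_torsion-free}, whose proof again rests on rank-admissibility results from \cite{DiS}/\cite{DGB} --- i.e., on precisely the statement you are trying to establish, making the sketch circular in practice. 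A smaller inaccuracy: you invoke \cite[Theorem 3.8]{DiS} in the form ``infinite pseudocompact implies $r_\mathcal{A}(G)\ge\C$,'' which is false in general (infinite torsion pseudocompact groups have free rank $0$); the correct hypothesis is that $G$ is non-torsion, which you do have available at that point, and in any case this inequality is not needed once admissibility of the rank is in hand.
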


\begin{proof} 
If $r_{\mathcal{A}}(G)=0$, then $|G|\le 2^{r_\mathcal{A}(G)}=1$ by our assumption, so $G$ is trivial, and all items (i)--(iv) are equivalent for the trivial group. Therefore, from now on we shall assume that $r_{\mathcal{A}}(G)\ge 1$; in particular, $G$ is  infinite and non-torsion.

(i)$\Rightarrow$(iv)
Suppose that $G$ admits a pseudocompact group topology. 
Since $G$ is non-torsion,
$\sigma=r_{\mathcal{A}}(G)$ is admissible by \cite[Corollary 1.19]{DGB}.
Since GCH is assumed, 
$\sigma^\omega=\sigma$ by Lemma \ref{3.4DiS}~(iii).
Since
$|G|\leq 2^\sigma$,
the assumptions of 
Theorem \ref{abelian} are satisfied.
Applying this theorem, we conclude that
$G$ is isomorphic to a dense 
connected, locally connected, \ssp\ subgroup 
of 
$\T^\sigma$.

The implication (iv)$\Rightarrow$(iii) is trivial, while 
the
implications (iii)$\Rightarrow$(ii)$\Rightarrow$(i) follow from equation~\eqref{two:implications}.
\end{proof}

\section{Two open questions}

We finish with two 
concrete versions 
of general Question \ref{main:problem} related to our results.

\begin{question}
Can Corollaries \ref{four:conditions:for:a:precompact:variety} and \ref{cor:3.9} be proved in ZFC? Or at least, can GCH be weakened to SCH in the assumption of these corollaries?
\end{question}

\begin{question}
Can Theorems \ref{thm:for:torsion:groups}, \ref{sch_connected}, \ref{thm:6.5}
and Corollaries \ref{corollary:for:abelian:varieties}, \ref{cor:3.13}, \ref{sch_torsion-free}, \ref{sch_non-torsion}
be proved in ZFC? 
\end{question}

\medskip
\noindent
{\bf Acknowledgement:\/} 
This paper was written during the first listed author's stay at the Department of Mathematics 
of Faculty of Science of Ehime University (Matsuyama, Japan)
in the capacity of Visiting Foreign Researcher 
with 
the financial support from
CONACyT of M\'exico: Estancias Posdoctorales al Extranjero propuesta No. 263464. The first author
would like to thank CONACyT for its support and the host institution for its hospitality.

\end{document}